\documentclass[11pt,reqno]{amsart}
\usepackage[utf8]{inputenc}

\usepackage{amsmath,amsfonts,amsthm,amscd,amssymb,mathabx}
\usepackage{mathrsfs}
\usepackage{bm}
\usepackage{cases}
\usepackage[hypertexnames=false,hyperfootnotes=false]{hyperref}
\usepackage{cite}
\usepackage{cleveref}
\usepackage{enumitem}
\usepackage{graphicx}
\usepackage{etoolbox}

\numberwithin{equation}{section}

\crefname{equation}{}{}

\usepackage[dvipsnames]{xcolor}

\newtheorem{theorem}{Theorem}[section]
\newtheorem{lemma}[theorem]{Lemma}
\newtheorem{proposition}[theorem]{Proposition}

\theoremstyle{definition}

\newtheoremstyle{mystyle-remark}{6pt}{6pt}{\normalfont}{}
{\normalfont\bfseries}{.}{0.5em}{}
\theoremstyle{mystyle-remark}
\newtheorem{remark}[theorem]{Remark}
\AtEndEnvironment{remark}{\unskip\nobreak\hfill\(\diamond\)}

\newcommand{\RR}{\mathbb{R}}

\newcommand{\ii}{\mathrm{i}}
\newcommand{\dd}{\mathrm{d}}
\newcommand{\ee}{\mathrm{e}}

\newcommand{\comm}[2]{[#1,#2]}

\newcommand{\Flam}{\widehat{\lambda}}
\newcommand{\Fomega}{\widehat{\omega}}

\DeclareMathOperator{\Tr}{Tr}

\begin{document}

\title{Modified Scattering for Infinite-Rank Hartree--Fock Equations}

\author[Kawamoto]{Masaki Kawamoto}
	\address[M. Kawamoto]{
		Research Institute for Interdisciplinary Science,
		Okayama University,
		3-1-1, Tsushimanaka, Kita-ku,
		Okayama City, Okayama, 700-8530, Japan
	}
	\email{\href{mailto:kawamoto.masaki@okayama-u.ac.jp}{kawamoto.masaki@okayama-u.ac.jp}}

\author[Lee]{Jinyeop Lee}
	\address[J. Lee]{
		Department of Applied Mathematics,
		Kyung Hee University,
		1732 Deogyeong-daero, Giheung-gu,
		Yongin-si, Gyeonggi-do, South Korea
	}
	\email{\href{mailto:jinyeop.lee@khu.ac.kr}{jinyeop.lee@khu.ac.kr}}

\author[Yang]{Changhun Yang}
	\address[C. Yang]{
		Department of Mathematics,
		Chungbuk National University,
		Cheongju 28644, South Korea
	}
	\email{\href{mailto:chyang@chungbuk.ac.kr}{chyang@chungbuk.ac.kr}}

\author[You]{Chanjin You}
	\address[C. You]{
		Department of Mathematics,
		Yale University,
		219 Prospect Street, New Haven, CT 06511, USA
	}
	\email{\href{mailto:chanjin.you@yale.edu}{chanjin.you@yale.edu}}

\date{\today}

\begin{abstract}
    We study the long-time behavior of the Hartree--Fock equation with Coulomb interaction in dimensions $d\geq2$ for small initial density matrices of possibly infinite rank. We prove global well-posedness and modified scattering governed by a resonant Hartree-Fock equation. Our approach is based on the half-density formulation and a final-state construction for the resonant dynamics. We also characterize the stationary states of the resonant Hartree--Fock equation and prove that ordinary linear scattering fails when the initial density matrix has rank at least two. These results extend long-range modified scattering to genuinely operator-valued infinite-rank Hartree-Fock evolution.
    Despite cancellation between the direct and exchange terms, the long-range effect persists, with resonant dynamics evolving on the same logarithmic time scale as in the Hartree equation.
\end{abstract}

\maketitle

{\let\thefootnote\relax
\footnotetext{All authors contributed equally; authors are listed in alphabetical order.}}

%\tableofcontents

\section{Introduction}

\subsection{Hartree--Fock equations}
Let $\gamma(t)$ be a nonnegative trace-class operator on $L^2(\RR^d)$ with integral kernel $\gamma(t,x,y)$.
We consider the time-dependent Hartree--Fock equation
\begin{equation}\label{eq:HF}
	\ii\partial_t\gamma = \comm{H_\gamma}{\gamma},
\end{equation}
where
\begin{equation*}
	H_\gamma = -\frac{1}{2}\Delta  + V*\rho_\gamma - {\mathsf{X}}_\gamma.
\end{equation*}
Here $\rho_\gamma(t,x)=\gamma(t,x,x)$, $V \ast \rho_\gamma$ is a multiplication operator, and $\mathsf{X}_A$ denotes the exchange operator with integral kernel
\begin{equation}\label{def:exchnage-operator}
    {\mathsf{X}}_A(t,x,y)=V(x-y)A(t,x,y),
\end{equation}
where $A(t,x,y)$ denotes the integral kernel of $A(t)$.

We mainly consider the Coulomb potential
\[
	V(x)=|x|^{-1}
\]
in $d \ge 2$. 
The small data global-wellposedness and modified scattering results also hold for $V(x) = C|x|^{-1}$ with $C \in \mathbb{R} \setminus \{ 0\}$. For simplicity, we take $C=1$.

\medskip

The time-dependent Hartree--Fock equation \eqref{eq:HF} gives a self-consistent description of fermionic dynamics in terms of the one-particle density matrix. The Hartree potential $V*\rho_\gamma$ represents the average interaction generated by the particle density, while the exchange operator $\mathsf X_\gamma$ accounts for the antisymmetry of fermionic wave functions.

For $N$ fermions, the many-body Hamiltonian is
\[
    H_N=-\frac12\sum_{j=1}^N\Delta_{x_j}+\sum_{1\le i<j\le N}V(x_i-x_j)
\]
acting on $L^2(\RR^d)^{\otimes_{\mathrm{asym}} N}$. An $N$-particle density operator $\Gamma_N$ with $\Tr\Gamma_N=1$ evolves according to $\ii\partial_t\Gamma_N=[H_N,\Gamma_N]$. Its one-particle density matrix is $\gamma_N=N\Tr_{2,\ldots,N}\Gamma_N$. The Dirac--Frenkel approximation by Slater determinants gives \eqref{eq:HF} for the associated one-particle density matrix $\gamma=\sum_{j=1}^N|\varphi_j\rangle\langle\varphi_j|$, where the orbitals $\{\varphi_j\}_{j=1}^{N}$ form an orthonormal family \cite{BenedikterSokSolovej2018}. In this case, $\gamma$ is a rank-$N$ orthogonal projection.

The Hartree--Fock approximation has been justified in various mean-field regimes for initial states close to Slater determinants \cite{BardosGolseMauser2003,BenedikterPortaSchlein2014,PetratPickl2016} and for mixed quasi-free states \cite{BenedikterJaksicPortaSaffirioSchlein2016}. The Coulomb interaction was treated under additional assumptions on the Hartree--Fock evolution in \cite{PortaRademacherSaffirioSchlein2017}. Further results for mixed states with singular interactions, including convergence in Schatten norms, were obtained in \cite{ChongLaflecheSaffirio2024}. At the level of ground-state energies, the leading correction to Hartree--Fock theory has been studied by bosonization of particle--hole excitations \cite{BenedikterNamPortaSchleinSeiringer2021,ChristiansenHainzlNam2023RPA}. For the Coulomb interaction, matching upper and lower bounds yield the mean-field analogue of the Gell-Mann--Brueckner formula for the correlation energy of the electron gas \cite{ChristiansenHainzlNam2023Upper,ChristiansenHainzlNam2027}.

The density-matrix formulation also applies to mixed quasi-free states without pairing, whose one-particle density matrices satisfy $0\le\gamma\le1$ and $\Tr\gamma<\infty$ but need not be projections \cite{BenedikterJaksicPortaSaffirioSchlein2016}. Their occupation numbers may be nonzero for infinitely many orbitals while remaining summable. Allowing infinite rank is therefore natural even when the expected number of particles is finite.

Such examples arise from the grand-canonical Gibbs states of noninteracting fermions in a harmonic trap at positive temperature \cite{ChongLaflecheSaffirio2023Thermal}. As shown in Remark~\ref{rem:thermal-data}, their one-particle density matrices satisfy the assumptions of our main theorem when the chemical potential is sufficiently negative. In this paper, we study the long-time behavior of small, localized trace-class density matrices without imposing a finite-rank assumption.

\subsection{Previous results}
The Cauchy problem for the Hartree--Fock equation has been studied extensively.
Early results include \cite{Bove1974,Chadam1975, Bove1976, Chadam1976,Zagatti1992}, under various assumptions on the interaction potential and the initial data.

The large time behavior of finite-rank Hartree--Fock equations was studied by Wada \cite{Wada1999,Wada2002}.
For the short-range interaction $V(x)=|x|^{-\theta}$, $4/3 < \theta < \min \{ 4, d \}$,  Wada \cite{Wada1999} constructed wave operators and proved asymptotic stability for large initial data.
For the long-range interaction $V(x) = |x|^{-1}$ in dimensions $d \ge 2$, Wada \cite{Wada2002} constructed modified wave operators for small final states and proved modified scattering for small initial data.
Ikeda \cite{Ikeda2012} subsequently constructed the modified scattering operator for small data. Recently, Mal\'ez\'e \cite{Maleze2025} proved small data scattering for localized trace class initial density matrices in one dimension with interaction potentials given by finite measures. The key observation is a leading order cancellation between the direct and exchange terms, which eliminates the expected long-range effect.

The asymptotic stability of translation-invariant steady states for the Hartree--Fock equation has also been studied for a class of short-range potentials. Collot, Danesi, de Suzzoni, and Mal\'ez\'e \cite{Collot2025} proved asymptotic stability in dimensions $d\ge 4$, while Nguyen and You \cite{NguyenYou2026} proved nonlinear Landau damping and asymptotic stability in dimensions $d \ge 3$. These results, however, do not cover long-range potentials such as the Coulomb potential.

\subsection{Modified scattering and resonant dynamics}
To motivate our approach, we first consider the scalar Hartree equation
\begin{equation}\label{NLS}
    \ii\partial_t u
    =-\frac12\Delta u+(V_\theta*|u|^2)u,
    \qquad V_\theta(x)=|x|^{-\theta},
    \qquad 0<\theta<d,
\end{equation}
where $u:\mathbb R\times\mathbb R^d\to\mathbb C$ and $d\ge2$.
This equation can be viewed as a simplified model of the Hartree--Fock equation \eqref{eq:HF}, obtained by neglecting the exchange term and restricting the resulting density-matrix evolution to rank-one states $\gamma(t)=|u(t)\rangle\langle u(t)|$.\footnote{In the full Hartree--Fock equation, by contrast, the direct and exchange terms cancel on rank-one states.} Equation \eqref{NLS} has been extensively studied, and our analysis in the operator setting builds on
the dispersive techniques developed for this model.

The spatial decay of the potential $V_\theta$ plays an important role in the long-time behavior of solutions to the Hartree equation.
We analyze this behavior by studying the interaction profile $f(t)=\ee^{-\ii t\Delta/2}u(t)$.
If $f(t)$ converges to a fixed state $u_+$ in $H^s$,
we say that $u(t)$ scatters to the free solution
$\ee^{\ii t\Delta/2}u_+$, in the sense that
\[
    \|f(t)-u_+\|_{H^s}
    =\|u(t)-\ee^{\ii t\Delta/2}u_+\|_{H^s}\longrightarrow0
    \qquad\text{as }t\to\infty.
\]
This property is called \emph{linear scattering}.

For sufficiently regular and localized profiles, the stationary phase method yields the asymptotic formula
\begin{equation}\label{eqn:free-schrodinger-asymptotics}
 u(t,x)\approx (2\pi \ii t)^{-d/2}\ee^{\ii|x|^2/(2t)}
 \widehat f\bigl(t,x/t\bigr),
 \qquad t\to\infty.
\end{equation}
Substituting this expansion into the profile equation and extracting the leading order contribution, we formally obtain
\begin{equation}\label{eqn:profile-heuristic}
    \ii\partial_t\widehat f(t,\xi)
    =
    (2\pi)^{-d} t^{-\theta}
    \left(
      \int_{\mathbb R^d}
      |\xi-\eta|^{-\theta}
      |\widehat f(t,\eta)|^2\,d\eta
    \right)\widehat f(t,\xi)
    +R_f(t,\xi),
\end{equation}
where $R_f(t,\xi)$ is a remainder term denoting the difference between the exact profile equation and its leading term with sufficient time decay.

This heuristic computation suggests that the threshold between short-range and long-range interactions is determined by the time integrability of $t^{-\theta}$. When $\theta>1$, this factor is integrable at infinity, and linear scattering is expected. Scattering results in this regime have been established under assumptions on the regularity, localization, and size of the data that depend on the range of $\theta$ \cite{HayashiTsutsumi1987,Mochizuki1989OnSD,miao2008}.
On the other hand, $0<\theta\le1$ corresponds to the long-range regime, where  the leading time factor is no longer integrable, and nonexistence of ordinary scattering has been proved under appropriate hypotheses.
For the three-dimensional Coulomb interaction, Glassey \cite{Glassey1997a} proved that a solution cannot scatter in $L^2$ to a nonzero free state belonging to $L^1\cap L^2$.
Hayashi and Tsutsumi \cite{HayashiTsutsumi1987} subsequently excluded nontrivial $L^2$ scattering for suitably localized initial data throughout $0<\theta\le1$.
The borderline exponent $\theta=1$ is thus scattering-critical and corresponds to the physical Coulomb potential in three dimensions.

For the scalar Hartree equation in the long-range regime, one instead seeks convergence of the profile after a nonlinear correction. This is referred to as \emph{modified scattering}.
It typically involves a real-valued phase $\Phi(t,\xi)$ such that
\[
    \ee^{\ii\Phi(t,-\ii\nabla)}f(t)\longrightarrow u_+
    \qquad\text{as }t\to\infty
\]
in an appropriate function space, where $u_+$ is a suitable modified scattering state. For $\theta=1$, sufficiently small, regular, and localized solutions admit a logarithmic phase correction of the form
\begin{equation}\label{eq:phase}
    \Phi(t,\xi)
    =(2\pi)^{-d} (\log t)\bigl(V_1*|\widehat{u_+}|^2\bigr)(\xi).
\end{equation}
Modified scattering with this logarithmic phase correction was established by Hayashi and Naumkin \cite{Hayashi1998a} and by Kato and Pusateri \cite{Kato2011}; see also \cite{Nakanishi2002} for modified wave operators.

To explain the form of this correction, we introduce the slow time $\tau=\log t$. Formally neglecting the remainder in \eqref{eqn:profile-heuristic}, we obtain the scalar resonant equation for a profile $g$:
\begin{equation}\label{eq:resonance scalar}
    \ii\partial_\tau\widehat g
    = (2\pi)^{-d} (V_1*|\widehat g|^2)\widehat g.
\end{equation}
Since the potential is real-valued, $|\widehat g(\tau,\xi)|^2$ is independent of $\tau$. Choosing the resonant initial datum $\widehat g(0)=\widehat{u_+}$, we therefore obtain
\[
    \widehat g(\tau,\xi)
    =
    \ee^{-\ii\tau(2\pi)^{-d} (V_1*|\widehat{u_+}|^2)(\xi)}
    \widehat{u_+}(\xi).
\]
This explicit solution explains the logarithmic phase correction in \eqref{eq:phase}. Here the resonant initial datum is selected through the asymptotic analysis and need not coincide with the profile $\widehat f(1)$.

Below the critical exponent, modified wave operators with power-type phase corrections were constructed for $1/2<\theta<1$ in \cite{GinibreVelo2000,Nakanishi2002b}. Ginibre and Velo \cite{GinibreVelo2000-2} treated the full range $0<\theta\le1$ in dimensions $d\ge3$ by incorporating further terms in the asymptotic construction. Recently, Huang and Xie \cite{HX2026arxiv} proved modified scattering for the three-dimensional Hartree equation in the range $0 < \theta \le 1/2$ for small initial data with sufficient spatial decay. Their asymptotic phase contains finitely many correction terms, determined recursively.

\medskip

The same heuristic, based on the time integrability of the leading nonlinear term, extends to operator-valued Schr\"odinger equations. Pusateri and Sigal proved trace-norm scattering for weak self-interactions in a class of short-range time-dependent Kohn--Sham equations \cite{Pusateri2021}. For Coulomb interactions, modified scattering was established for the three-dimensional infinite-rank Hartree equation by Nguyen and You \cite{NguyenYou2025}, and for critical Kohn--Sham equations in dimensions two and three by the authors of the present paper \cite{KLYY2026arxiv}. These works treat infinite-rank density matrices, although their functional frameworks and scattering topologies differ. For Hartree--Fock systems with finitely many orbitals, short-range scattering was studied in \cite{Wada1999}, while Coulomb long-range scattering and modified wave operators were investigated in \cite{Wada2002,Ikeda2012}.

The purpose of the present work is to describe the long-range dynamics of the Hartree--Fock equation with Coulomb interaction for small and localized density matrices, without a finite-rank restriction. We now carry out the corresponding formal computation for the Hartree--Fock equation \eqref{eq:HF}, allowing the more general homogeneous interaction $V_\theta(x)=|x|^{-\theta}$, $0<\theta<d$.
We introduce the interaction profile
\[
    \Lambda(t)
    =\ee^{-\ii t\Delta/2}\gamma(t)\ee^{\ii t\Delta/2}.
\]
The same rescaling as in the scalar case formally gives
\begin{equation}\label{eqn:hf-profile-heuristic}
\begin{aligned}
    \ii\partial_t\Lambda(t)
    =&t^{-\theta}\mathcal F^{-1}
    \Big[
      V_\theta*\rho_{\mathcal F\Lambda(t)\mathcal F^{-1}}
      -\mathsf X_{\mathcal F\Lambda(t)\mathcal F^{-1}}^{(\theta)}, \, 
      \mathcal F\Lambda(t)\mathcal F^{-1}
    \Big]\mathcal F
    +\mathcal E_\Lambda(t),
\end{aligned}
\end{equation}
where $\mathsf X_A^{(\theta)}$ denotes the exchange operator with kernel $|\xi-\eta|^{-\theta}A(\xi,\eta)$. Here $\mathcal E_\Lambda$ is the remainder, which has to be shown to decay sufficiently rapidly. The spatial decay of the potential thus gives the same time factor $t^{-\theta}$ as in the scalar Hartree model, with both the direct and exchange terms contributing at this same order. Because of the exchange term, we have two additional issues for the asymptotic analysis.

The first issue is whether the cancellation between the direct and exchange terms removes the long-range obstruction to linear scattering.
The direct and exchange terms appear with opposite signs in the Hartree--Fock Hamiltonian, and the two contributions cancel exactly in the evolution of rank-one states. For long-range interactions, including the Coulomb potential, Proposition~\ref{prop:stationary-coulomb} characterizes the states for which the leading commutator vanishes: for a positive trace-class operator $A$ with bounded density,
\[
    [V_\theta*\rho_A-\mathsf X_A^{(\theta)},A]=0
    \quad\Longleftrightarrow\quad
    \operatorname{rank} A \le1, \quad \text{ when} \quad 0<\theta\le1.
\]
Under the hypotheses of Theorem~\ref{thm:no-linear-scattering}, including the optimal density decay $\|\rho_{\gamma(t)}\|_{L^\infty}\lesssim\langle t\rangle^{-d}$, convergence $\Lambda(t)\to\gamma_+$ in trace norm forces asymptotic stationarity:
\[
    [V_\theta*\rho_{\mathcal F\gamma_+\mathcal F^{-1}}-\mathsf X_{\mathcal F\gamma_+\mathcal F^{-1}}^{(\theta)}, \, \mathcal F\gamma_+\mathcal F^{-1}]=0.
\]
The density decay also ensures that $\rho_{\mathcal F\gamma_+\mathcal F^{-1}}$ is bounded, so the stationary-state characterization gives $\operatorname{rank}\gamma_+\le1$. On the other hand, preservation of the spectrum, together with convergence in trace norm, excludes this possibility when $\operatorname{rank}\gamma(0)\ge2$. Consequently, exchange does not restore linear scattering for solutions of rank at least two satisfying the optimal density decay bound. For finite-measure potentials in one dimension, however, cancellation can lead to linear scattering; see Mal\'ez\'e \cite{Maleze2025}.

The second issue concerns the effective dynamics that
describe the solution when linear scattering fails.
Here the nonlocal structure of exchange becomes essential. Whereas the direct Hartree term depends only on the density $\rho_\gamma(x)=\gamma(x,x)$, the exchange operator depends on the full integral kernel $\gamma(x,y)$, including its off-diagonal part. For the Hartree resonant equation, the density is conserved, so the associated multiplication potential is independent of time and yields an explicit phase correction. In Hartree--Fock, the leading Hamiltonian also contains an exchange operator coupled to the evolving kernel. The effective dynamics therefore cannot in general be described by a fixed scalar Fourier phase.

The explicit Hartree phase correction can be interpreted in terms of the scalar resonant flow \eqref{eq:resonance scalar}. For Hartree--Fock, we describe modified scattering through the full resonant evolution, retaining the leading commutator in \eqref{eqn:hf-profile-heuristic}.  In the Coulomb case $\theta=1$, introducing the logarithmic time $\tau=\log t$ and writing $\mathfrak{F}(\Gamma):=\mathcal F\,\Gamma\mathcal F^{-1}$,  we obtain the autonomous resonant equation
\begin{equation}\label{eqn:res-eqn-gamma1}
    \ii\partial_\tau\mathfrak{F}(\Gamma)
    =
    \bigl[
        V_1*\rho_{\mathfrak{F}(\Gamma)}
        -\mathsf X_{\mathfrak{F}(\Gamma)}^{(1)},
        \mathfrak{F}(\Gamma)
    \bigr],
\end{equation}
which admits global solutions with time-independent density $\rho_{\mathfrak{F}(\Gamma)}$; see Lemma~\ref{lem:resonant-GWP}. Thus, the asymptotic behavior is described by a nonlinear resonant flow on a logarithmic time scale. More precisely, there exists a resonant profile $\Gamma(\tau)$, with $\mathfrak{F}(\Gamma)$ solving \eqref{eqn:res-eqn-gamma1},
such that
\[
    \|\Lambda(t)-\Gamma(\log t)\|_{\mathfrak S^1}
    =
    \|\gamma(t)-\ee^{\ii t\Delta/2}\Gamma(\log t)\ee^{-\ii t\Delta/2}\|_{\mathfrak S^1}
    \longrightarrow 0
    \qquad\text{as }t\to\infty.
\]
This asymptotic description is also referred to as
\emph{modified scattering}. Modified scattering governed by nonlinear resonant dynamics has also been established for cubic Schr\"odinger equations on product spaces by Hani, Pausader, Tzvetkov, and Visciglia \cite{hani2015modified}, and under partial harmonic trapping by Hani and Thomann \cite{HaniThomann2016}.

\medskip

In Appendix~\ref{app:Subcritical HFE}, we also prove linear scattering for the Hartree--Fock equation with short-range interactions and small, localized initial data. Together with the nonexistence of linear scattering and the modified scattering result in the Coulomb case (Theorems~\ref{thm:no-linear-scattering} and~\ref{thm:main}), this identifies $\theta=1$ as the scattering-critical threshold suggested by the time factor $t^{-\theta}$. These results extend the classical scalar scattering theory to the infinite-rank operator framework.

\subsection{Main results}

To state the scattering result, we first introduce the resonant Hartree--Fock equation. Write $\mathfrak{F}(\Gamma) := \mathcal{F} \Gamma \mathcal{F}^{-1}$ and set $\rho_{\mathfrak{F}(\Gamma)}(k) := \mathfrak{F}(\Gamma)(k,k)$.
We recall that the direct potential is given by
\[
	\bigl(V* \rho_{\mathfrak{F}(\Gamma)} \bigr)(k) = \int_{\RR^d} V(k-\ell) \rho_{\mathfrak{F}(\Gamma)}(\ell) \, \dd \ell,
\]
and the exchange operator has the integral kernel
\[
	\mathsf{X}_{\mathfrak{F}(\Gamma)}(k,p) := V(k-p)\mathfrak{F}(\Gamma)(k,p).
\]
We write the resonant Hartree--Fock equation in logarithmic time $\tau=\log t$:
\begin{equation}\label{eqn:res-eqn-gamma}
	\ii\partial_\tau\mathfrak{F}(\Gamma) = \bigl[ V*\rho_{\mathfrak{F}(\Gamma)} - \mathsf X_{\mathfrak{F}(\Gamma)}, \mathfrak{F}(\Gamma) \bigr],\qquad \tau\ge0,
\end{equation}
see Section \ref{sec:halfdensity} for a discussion.

\begin{theorem}\label{thm:main}
	Let $V(x) = |x|^{-1}$, $d\ge2$ and $\sigma=\lfloor d/2\rfloor+1$. 
	There exist $\delta >0$ and $\epsilon_*>0$, depending only on $d$, with the following property.
	Let $\gamma_0\ge0$ and assume
	\[
		\epsilon:=\|\langle\nabla\rangle^\sigma\gamma_0 \langle\nabla\rangle^\sigma\|_{\mathfrak S^1} +\|\langle x\rangle^\sigma\gamma_0 \langle x\rangle^\sigma\|_{\mathfrak S^1} \le\epsilon_*.
	\]
	Then \eqref{eq:HF} has a unique nonnegative global solution $\gamma\in C([0,\infty);\mathfrak S^{\sigma,1})$ with $\gamma(0)=\gamma_0$.
	It also belongs to $C^1([0,\infty);\mathfrak S^1)$ and satisfies $|\mathrm J(t)|^\sigma\gamma(t)|\mathrm J(t)|^\sigma \in C([0,\infty);\mathfrak S^1)$. Its density satisfies
	\begin{equation}\label{time decay of rho}
		\|\rho_\gamma(t)\|_{L^p}\lesssim \epsilon\langle t\rangle^{-d(1-1/p)},\qquad 1\le p\le\infty, \quad t\ge0.
	\end{equation}
	Moreover, there exist a nonnegative $\Gamma_*\in\mathfrak S^1$ and a nonnegative $\Gamma\in C^1([0,\infty);\mathfrak S^1)$, with $\Gamma(0)=\Gamma_*$, such that $G(\tau):=\mathfrak F(\Gamma(\tau))$ solves \eqref{eqn:res-eqn-gamma} in $\mathfrak S^1$, $\rho_{G(\tau)}=\rho_{G(0)}\in L^1\cap L^\infty$ and $\operatorname{Tr}\Gamma(\tau)=\operatorname{Tr}\gamma_0$ for $\tau\ge0$, and
    \[
		\|\gamma(t)-\ee^{\ii t\Delta/2}\Gamma(\log t ) \ee^{-\ii t\Delta/2}\|_{\mathfrak S^1} \lesssim\epsilon t^{-3\delta},\qquad t\ge1.
	\]
\end{theorem}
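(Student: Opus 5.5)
We prove the result in three stages: a half-density reformulation with a bootstrap giving global existence and density decay, convergence of the resonantly renormalized profile, and construction of the resonant solution with the stated conservation properties.

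\emph{Stage 1: half-density formulation and bootstrap.} Since $\gamma_0\ge0$, we write $\gamma_0=\psi_0\psi_0^*$ with $\psi_0:=\gamma_0^{1/2}\in\mathfrak S^2$. We then evolve the Hilbert--Schmidt operator $\psi(t)$ by
\[
\ii\partial_t\psi=H_{\psi\psi^*}\psi,
\]
so that $\gamma=\psi\psi^*$ solves \eqref{eq:HF} and stays nonnegative automatically. The trace-norm hypotheses translate into Hilbert--Schmidt bounds on $\langle\nabla\rangle^\sigma\psi_0$ and $\langle x\rangle^\sigma\psi_0$ of size $\epsilon^{1/2}$, because $\|A\psi_0\|_{\mathfrak S^2}^2=\|A\gamma_0A^*\|_{\mathfrak S^1}$.

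Local well-posedness in the space generated by $\langle\nabla\rangle^\sigma$ and $\mathrm J(t)=x+\ii t\nabla$ is standard. Hardy's inequality and the boundedness of $\mathsf X$ on $\mathfrak S^2$ (via $|x-y|^{-1}\lesssim$ Hardy in one variable) give the needed product estimates.

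For global existence we run a bootstrap with slowly growing energy norms, $\|\langle\nabla\rangle^\sigma\psi\|+\||\mathrm J|^\sigma\psi\|\lesssim\epsilon^{1/2}\langle t\rangle^{\delta}$. The key input is the operator-valued Klainerman--Sobolev estimate
\[
\|\rho_\gamma(t)\|_{L^\infty}\lesssim t^{-d}\||\mathrm J|^\sigma\psi\|_{\mathfrak S^2}^2,
\]
valid because $\sigma>d/2$. It is proved by writing $\mathrm J=\ee^{\ii t\Delta/2}x\ee^{-\ii t\Delta/2}$ and using the factorization $\ee^{\ii t\Delta/2}=M_tD_t\mathcal FM_t$. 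Combined with the $L^1$ bound this gives \eqref{time decay of rho}. It also gives $\|V*\rho\|_{L^\infty}+\|\nabla V*\rho\|_{L^\infty}\lesssim\epsilon t^{-1}$, and the analogous bound for the exchange operator, with only a logarithmic loss. Since $\mathrm J$ commutes with the free flow and $[\mathrm J,V*\rho]=\ii t\nabla(V*\rho)$, energy estimates give $\frac{d}{dt}E\lesssim\epsilon t^{-1}E$. Hence $E\lesssim\epsilon\langle t\rangle^{C\epsilon}$, which closes the bootstrap for $C\epsilon\le\delta$. Transferring back to $\gamma$ yields the stated regularity and uniqueness.

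\emph{Stage 2: asymptotics of the profile.} Put $\Lambda(t)=\ee^{-\ii t\Delta/2}\gamma\ee^{\ii t\Delta/2}$ and use the factorization to derive \eqref{eqn:hf-profile-heuristic} with $\theta=1$. The remainder must satisfy $\|\mathcal E_\Lambda(t)\|_{\mathfrak S^1}\lesssim\epsilon t^{-1-4\delta}$. To get this, expand $M_t-1$, which costs $|x|^2/t$ and is interpolated against the $\mathrm J$-weights. The Coulomb kernel in the exchange term is handled by the same weighted bounds with Hardy's inequality.

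\emph{Stage 3: resonant dynamics and final state.} Let $\mathcal R(G)=[V*\rho_G-\mathsf X_G,G]$ and let $S(\tau)$ be its flow, globally defined by Lemma~\ref{lem:resonant-GWP}. We need three facts about this flow:
\begin{itemize}
\item it is unitary conjugation by the propagator of a self-adjoint bounded-plus-Coulomb Hamiltonian, so it preserves nonnegativity and trace;
\item the density is conserved, since the diagonal of $[V*\rho_G,G]$ vanishes pointwise and the exchange part cancels on the diagonal by symmetry of $V(k-p)$;
\item it is Lipschitz on $\mathfrak S^1$ balls, with a constant controlled by $\|\rho\|_{L^\infty\cap L^1}$.
\end{itemize}

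Define the pulled-back profile
\[
\Gamma_t:=\mathfrak F^{-1}S(-\log t)\mathfrak F(\Lambda(t)).
\]
Differentiating, the leading commutator cancels. Because the equation is nonlinear, what remains is a difference of the linearized flow acting on $\mathcal E_\Lambda$. A Gronwall estimate for the linearized resonant flow then gives $\|\partial_t\Gamma_t\|_{\mathfrak S^1}\lesssim\epsilon t^{-1-3\delta}$, provided the linearization grows at most like $t^{C\epsilon}$, which holds since the coefficient is $O(\epsilon)$ in logarithmic time.

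Hence $\Gamma_t\to\Gamma_*$ in $\mathfrak S^1$ at rate $t^{-3\delta}$. We then set $\Gamma(\tau)=\mathfrak F^{-1}S(\tau)\mathfrak F\Gamma_*$. Stability of $S$ converts the convergence $\Gamma_t\to\Gamma_*$ into the stated bound $\|\Lambda(t)-\Gamma(\log t)\|_{\mathfrak S^1}\lesssim\epsilon t^{-3\delta}$. Nonnegativity, trace conservation and $\rho_{G}\in L^1\cap L^\infty$ pass to the limit from the density decay bounds.

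The main obstacle is the remainder estimate in Stage 2 for the exchange term: the Coulomb singularity acts on the full off-diagonal kernel, so it must be controlled in trace norm using only half-density Hilbert--Schmidt weights. We would handle it by factoring $\mathsf X_{\psi\psi^*}$ through $\psi$ and using Hardy's inequality in each variable separately.
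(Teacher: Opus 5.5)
There is a genuine gap in Stage~1, and it sits exactly where the Coulomb problem differs from the short-range one. (Your half-density reformulation itself matches the paper's left gauge.) Your only source of pointwise decay is the Klainerman--Sobolev bound. Its right-hand side is the weighted norm that your bootstrap allows to grow. Under $E(t)\lesssim\epsilon^{1/2}t^{\delta}$ it gives only $\|\rho_\gamma(t)\|_{L^\infty}\lesssim\epsilon t^{-d+2\delta}$. Hence
$\|V*\rho_\gamma(t)\|_{L^\infty}\lesssim\|\rho_\gamma\|_{L^1}^{1-1/d}\|\rho_\gamma\|_{L^\infty}^{1/d}\lesssim\epsilon t^{-1+2\delta/d}$, not $\epsilon t^{-1}$. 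The energy inequality becomes $\frac{\dd}{\dd t}E\lesssim\epsilon t^{-1+2\delta/d}E$. Gronwall then yields only $E(t)\lesssim E(1)\exp(c\,\epsilon\,\delta^{-1}t^{2\delta/d})$, which is not $O(t^{\delta})$, so the bootstrap does not close. The ``logarithmic loss'' you allow in the exchange bound is fatal for the same reason: $\frac{\dd}{\dd t}E\lesssim\epsilon t^{-1}\log t\,E$ gives $E\lesssim\exp(c\epsilon(\log t)^2)$. At $\theta=1$ the factor $t^{-1}$ leaves no room for any loss, so sharp $t^{-d}$ density decay cannot be extracted from growing weighted norms alone. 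Your Stage~1 is essentially the argument of Appendix~\ref{app:Subcritical HFE}, which works only because $t^{-\theta}$ is integrable for $\theta>1$. Consequently, global existence, the density decay, and the uniform density bounds you use later are not established.

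The missing ingredient is a loss-free, uniform-in-time bound on $\sup_k\|\widehat\lambda(t,k)\|_{L^2_y}$ for the half-density profile $\lambda(t)=\ee^{-\ii t\Delta/2}\kappa(t)$. The paper carries $t^{d/2}\|\kappa(t)\|_{L^\infty_xL^2_y}$ as a separate bootstrap quantity and uses
\[
\|\kappa(t)\|_{L^\infty_xL^2_y}\lesssim t^{-d/2}\|\widehat\lambda(t)\|_{L^\infty_kL^2_y}+t^{-d/2-\theta}\|\langle x\rangle^\sigma\lambda(t)\|_{L^2_{x,y}},\qquad \theta=6\delta,
\]
so the growing weighted norm enters only with an extra factor $t^{-\theta}$. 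For the first term it writes $\ii\partial_t\widehat\lambda=t^{-1}A[\widehat\lambda](k)\widehat\lambda+\mathcal E[\widehat\lambda]$. Since $A[\widehat\lambda](k)$ is self-adjoint on $L^2_y$ for every $k$, the resonant part preserves $\|\widehat\lambda(t,k)\|_{L^2_y}$ exactly. The remainder satisfies $\|\mathcal E[\widehat\lambda](t)\|_X\lesssim t^{-1-3\delta}\mathcal B_T(\kappa)^3$, which is integrable. Your Stage~2 cannot replace this, because a trace-norm remainder for $\Lambda$ controls the Fourier-side density only in $L^1_k$, not in $L^\infty_k$.

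The choice of norm matters in Stage~3 as well. For $d=2$, the map $G\mapsto[V*\rho_G-\mathsf X_G,G]$ is not Lipschitz on $\mathfrak S^1$, even among nonnegative states with uniformly bounded density. Take $G=|\phi\rangle\langle\phi|$ with $\phi$ continuous and $\phi(k_0)\neq0$. Let $G'=G+r^2|\chi_r\rangle\langle\chi_r|$, where $\chi_r$ is an $L^2$-normalized bump of radius $r$ at $k_0$. Both densities stay bounded and $\|G'-G\|_{\mathfrak S^1}=r^2$. However, the commutator for $G'$ has trace norm $\gtrsim r^2\sqrt{\log(1/r)}$, because $V*|\chi_r|^2\approx|k-k_0|^{-1}$ away from $k_0$ and $|k-k_0|^{-1}\phi\notin L^2(\mathbb R^2)$. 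So your Gronwall stability argument for the resonant flow breaks down in two dimensions. The paper instead solves the final-state problem for the half-density in $X=L^2_{k,y}\cap C_b(\mathbb R^d;L^2_y)$, where Lemma~\ref{lem:A} gives Lipschitz bounds for all $d\ge2$. It also conjugates by the linear unitary propagator $\mathcal U_{\widehat\lambda}$ rather than by the nonlinear resonant flow.
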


\begin{remark}
For Theorem \ref{thm:main}, we remark the following.
\begin{enumerate}
\item
If $\operatorname{rank}\gamma_0=1$, the direct and exchange contributions cancel in the evolution equation, and hence
\[
    \gamma(t)
    =\ee^{\ii t\Delta/2}\gamma_0\ee^{-\ii t\Delta/2}.
\]
The corresponding resonant solution is stationary: $\Gamma(\tau)\equiv\gamma_0$.
Thus the modified scattering error is identically zero.
\item
We do not claim that the regularity and localization assumptions in Theorem~\ref{thm:main} are optimal. For the Kohn--Sham equation in dimensions two and three, \cite{KLYY2026arxiv} uses weighted control of the free profile together with an $L^\infty$ bound on the initial density, without a separate Sobolev regularity assumption. In the same dimensions, modified scattering for the scalar Hartree equation with small weighted $L^2$ data was proved in \cite{TimVanHoose2024arxiv}.

\item
The resonant equation \eqref{eqn:res-eqn-gamma} does not require a smallness assumption. For any nonnegative $G_0\in\mathfrak S^1$ with $\rho_{G_0}\in L^\infty$, it admits a global nonnegative solution. Indeed, the density is independent of time, so the bounds needed for global continuation remain uniform, as in the proof of Lemma~\ref{lem:resonant-GWP}.

\item
The decay exponent $d(1-1/p)$ in \eqref{time decay of rho} agrees with that of the density under the free Schr\"odinger evolution. This decay rate is sharp, as can already be seen from rank-one free solutions.

\end{enumerate}
\end{remark}

The following remark give us an example of a physically natural initial data of infinite rank satisfying the assumptions of Theorem \ref{thm:main}.
\begin{remark}[Thermal initial data]\label{rem:thermal-data}
Define the following operator, so called Gibbs measure,
	\[
	\gamma_0
	=\left(1+\ee^{\beta(\frac12(-\Delta+|x|^2)-\mu)}\right)^{-1},
	\qquad \beta>0,\quad \mu\in\RR.
	\]
	The eigenvalues of $\gamma_0$ are $(1+\ee^{\beta(n_1+\cdots+n_d+d/2-\mu)})^{-1}$ for non-negative integers $n_1, \dots,n_d$. So $\gamma_0$ has infinite rank. By spectral calculus, we have
	\[
	0\le\gamma_0
	\le\ee^{\beta\mu}\ee^{-\frac{\beta}{2}(-\Delta+|x|^2)}.
	\]
	Using the Fourier invariance of $\frac12(-\Delta+|x|^2)$ and Mehler's formula, we obtain
	\[
	\|\langle\nabla\rangle^\sigma\gamma_0\langle\nabla\rangle^\sigma\|_{\mathfrak S^1}
	+\|\langle x\rangle^\sigma\gamma_0\langle x\rangle^\sigma\|_{\mathfrak S^1}
	\le C_{\beta,d,\sigma}\ee^{\beta\mu},
	\]
	where the constant $C_{\beta,d,\sigma}$ is independent of $\mu$. Thus, for each fixed $\beta>0$, these initial data satisfy the assumptions of Theorem~\ref{thm:main} when $\mu$ is sufficiently negative.
\end{remark}

The long-range modification in Theorem~\ref{thm:main} is essential
when $\operatorname{rank}\gamma_0\ge2$, as the following theorem shows.
\begin{theorem}[Nonexistence of linear scattering] \label{thm:no-linear-scattering}
Let $V(x) = |x|^{-1}$, $d\ge2$, and let $\gamma\in C([0,\infty);\mathfrak{S}^1)$ be a global solution to \eqref{eq:HF} with initial data $\gamma_0$ satisfying 
\[
 \gamma_0 \ge0, \quad \operatorname{rank} \gamma_0\ge 2.
\]
Assume $\rho_\gamma(t)$ satisfies the time decay estimate
\begin{equation}\label{eq:optimal-density-decay}
\langle t\rangle^{d}\|\rho_\gamma(t)\|_{L^\infty}\lesssim 1.
	\end{equation}Then there is no $\gamma_+\in \mathfrak{S}^1$ such that 
\begin{equation}\label{eq:LS}
  \|\gamma(t)-\ee^{\ii t\Delta/2}\gamma_+ \ee^{-\ii t\Delta/2}\|_{\mathfrak S^1} \longrightarrow0\quad\text{as }\;t\to\infty.
\end{equation}
    \end{theorem}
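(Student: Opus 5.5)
We argue by contradiction, following the outline sketched in the introduction. Suppose some $\gamma_+\in\mathfrak S^1$ satisfies \eqref{eq:LS}, and set $\Lambda(t)=\ee^{-\ii t\Delta/2}\gamma(t)\ee^{\ii t\Delta/2}$, so that $\Lambda(t)\to\gamma_+$ in $\mathfrak S^1$.

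\textbf{Step 1: spectral constraints on $\gamma_+$.} The flow \eqref{eq:HF} is generated by a commutator with a self-adjoint operator, so $\gamma(t)$ is unitarily equivalent to $\gamma_0$. Conjugation by the free flow is also unitary, hence $\Lambda(t)$ has the same nonzero eigenvalues, with multiplicities, as $\gamma_0$. Since $\operatorname{rank}\gamma_0\ge2$, $\gamma_0$ has at least two positive eigenvalues counted with multiplicity, say $\lambda_1\ge\lambda_2>0$. Trace-norm convergence implies operator-norm convergence, and by min--max (Weyl's inequality) the ordered eigenvalues converge. Therefore $\gamma_+\ge0$ has $\lambda_1,\lambda_2$ among its eigenvalues, and $\operatorname{rank}\gamma_+\ge2$.

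\textbf{Step 2: $\gamma_+$ is a resonant stationary state.} Set $A_+=\mathcal F\gamma_+\mathcal F^{-1}$ and $L(A)=[V*\rho_A-\mathsf X_A,A]$. First, $\rho_{A_+}\in L^\infty$: the pseudo-conformal factorization $\ee^{\ii t\Delta/2}=M_tD_t\mathcal F M_t$ gives
\[
\rho_{\gamma(t)}(x)\approx t^{-d}\rho_{\mathcal F M_t\Lambda(t)M_t^{-1}\mathcal F^{-1}}(x/t).
\]
By \eqref{eq:optimal-density-decay}, these rescaled densities are uniformly bounded in $L^\infty$ and converge weakly-$*$ to $\rho_{A_+}$, which must therefore be bounded. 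Next, integrate the profile equation \eqref{eqn:hf-profile-heuristic} with $\theta=1$ over dyadic windows $[T,2T]$ and test against a fixed finite-rank smooth operator $K$. The left side gives $\Tr\bigl(K(\Lambda(2T)-\Lambda(T))\bigr)\to0$. On the right, $t^{-1}L(\mathcal F\Lambda(t)\mathcal F^{-1})$ differs from $t^{-1}L(A_+)$ by $o(1/t)$ in a weak sense, and the remainder $\mathcal E_\Lambda$ is $O(t^{-1-\delta})$ or at least $o(1/t)$ after testing. Since $\int_T^{2T}dt/t=\log2$, we conclude $\Tr(K\,L(A_+))=0$ for all such $K$, that is, $L(A_+)=0$.

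\textbf{Step 3: conclusion.} By Proposition~\ref{prop:stationary-coulomb} with $\theta=1$, applied to the positive trace-class operator $A_+$ with bounded density, $\operatorname{rank}A_+\le1$. This contradicts Step 1.

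\textbf{Main obstacle.} The hardest part is Step 2: justifying the limit of the nonlinear term and the smallness of the remainder using only trace-norm convergence and \eqref{eq:optimal-density-decay}, without the full regularity of Theorem~\ref{thm:main}. I would work weakly, with smooth compactly supported test operators $K$, and rewrite $\ii\partial_t\Lambda=\ee^{-\ii t\Delta/2}[V*\rho_\gamma-\mathsf X_\gamma,\gamma]\ee^{\ii t\Delta/2}$ exactly. The direct term is then handled through the weak convergence of the rescaled density, while \eqref{eq:optimal-density-decay} controls $V*\rho_\gamma$ in $L^\infty$ by $t^{-1}$. The exchange term is handled by writing $\gamma=\ee^{\ii t\Delta/2}\Lambda\ee^{-\ii t\Delta/2}$ and using that $|x-y|^{-1}$ evaluated on the dilated kernel becomes $t^{-1}|k-p|^{-1}$, plus errors from the $M_t$ phases. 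These phase errors vanish because $M_t\to1$ strongly, which yields the $o(1/t)$ required.
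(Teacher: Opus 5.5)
Your architecture matches the paper's. You argue by contradiction, use the rescaled state $\Upsilon(t)=\mathcal F M_t\Lambda(t)M_t^{-1}\mathcal F^{-1}$, whose density $t^d\rho_{\gamma(t)}(t\,\cdot)$ is uniformly bounded, to get $\rho_{\mathcal F\gamma_+\mathcal F^{-1}}\in L^\infty$, integrate over $[T,2T]$ to extract $\log 2$ times the limiting commutator, and finish with Proposition~\ref{prop:stationary-coulomb} and conservation of the spectrum. Steps~1 and~3 and the density bound are fine.

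The gap is Step~2, which carries all the analysis and which you assert rather than prove. The heuristic remainder $\mathcal E_\Lambda$ in \eqref{eqn:hf-profile-heuristic} cannot be shown to be $O(t^{-1-\delta})$ for a solution that is merely in $C([0,\infty);\mathfrak S^1)$ and satisfies \eqref{eq:optimal-density-decay}; bounds of that type, as in Lemma~\ref{lem:error}, need weighted Sobolev control. The correct replacement, which your last paragraph gestures at, is that no stationary-phase remainder arises at all. Since $M_t$ commutes with $V*\rho_\gamma$, $M_t^{-1}\mathsf X_\gamma M_t=\mathsf X_{M_t^{-1}\gamma M_t}$, and $V(tz)=t^{-1}V(z)$, one has exactly
\[
 t\,\ee^{-\ii t\Delta/2}[V*\rho_{\gamma(t)}-\mathsf X_{\gamma(t)},\gamma(t)]\ee^{\ii t\Delta/2}
 =M_t^{-1}\mathcal F^{-1}[V*\rho_{\Upsilon(t)}-\mathsf X_{\Upsilon(t)},\Upsilon(t)]\mathcal F M_t .
\]
Thus the phases $M_t$ act only from outside on a trace-class commutator and never touch the singular field. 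Moreover, the field that appears is that of $\Upsilon(t)$, not that of $\mathcal F\Lambda(t)\mathcal F^{-1}$ as in your Step~2. The latter is $\Upsilon(t)$ conjugated by the unitary $\mathcal F M_t^{-1}\mathcal F^{-1}$, its density is not known to be bounded, and so your $L(\mathcal F\Lambda(t)\mathcal F^{-1})$ comes with no uniform control.

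The missing idea is convergence of the field $F_A:=V*\rho_A-\mathsf X_A$ along $A=\Upsilon(t)\to\Upsilon_\infty=\mathcal F\gamma_+\mathcal F^{-1}$. The trace-norm convergence itself follows from $M_t\to I$ strongly. The density bound handles the direct term, but $A\mapsto\mathsf X_A$ is not continuous from $\mathfrak S^1$ into bounded operators, because the kernel $|k-p|^{-1}A(k,p)$ is singular on the diagonal. Testing against smooth finite-rank $K$ does not help either: by cyclicity, $\mathsf X_{\Upsilon(t)}$ is paired with $[\Upsilon(t),K]$, which is merely trace class. The paper proves, for $A,B\ge0$ in $\mathfrak S^1$ with $\|\rho_A\|_{L^\infty},\|\rho_B\|_{L^\infty}\le M$ and every $R>0$,
\[
 \|F_A-F_B\|_{\mathrm{op}}\lesssim MR^{d-1}+R^{-1}\|A-B\|_{\mathfrak S^1}.
\]
The proof splits $V=V\mathbf 1_{\{|x|<R\}}+V\mathbf 1_{\{|x|\ge R\}}$. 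The far exchange part is bounded in Hilbert--Schmidt norm by $R^{-1}\|A-B\|_{\mathfrak S^2}$. The near exchange part is bounded separately for $A$ and for $B$ by $\|(V\mathbf 1_{\{|x|<R\}})*\rho_A\|_{L^\infty}\lesssim MR^{d-1}$. That bound uses positivity, $|A(x,y)|^2\le\rho_A(x)\rho_A(y)$, together with the quadratic-form estimate from the proof of Proposition~\ref{prop:stationary-coulomb}. Choosing $R$ small and then $t$ large gives $F_{\Upsilon(t)}\to F_{\Upsilon_\infty}$ in operator norm, hence convergence of the commutator in $\mathfrak S^1$. Your dyadic argument then closes in trace norm, with no weak formulation needed. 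This is exactly where the uniform density bound and positivity enter; without such an estimate Step~2 is unproved.
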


\begin{remark}[Extension to $0<\theta<1$]
\label{rem:power-extension-intro}
The conclusion of Theorem~\ref{thm:no-linear-scattering} extends to $V(x)=V_\theta(x)=|x|^{-\theta}$, $0<\theta<1$, 
under the same assumptions
on the solution. More precisely, let $d\ge2$, $0<\theta<1$, and set
\[
 (\mathsf X_A^{(\theta)}f)(x)
 =\int_{\mathbb R^d}|x-y|^{-\theta}A(x,y)f(y)\,dy.
\]
Let $\gamma\in C([0,\infty);\mathfrak S^1)$ be a global mild solution of
\begin{equation}
 \ii\partial_t\gamma
 =[-\Delta/2+V_\theta*\rho_\gamma-\mathsf X_\gamma^{(\theta)},\gamma],
 \qquad \gamma(0)=\gamma_0,
 \label{eq:HF-theta}
\end{equation}
with $\gamma_0\ge0$ and $\operatorname{rank}\gamma_0\ge2$.
Assume also that $\gamma$ satisfies \eqref{eq:optimal-density-decay}. Then there is no
$\gamma_+\in\mathfrak S^1$ satisfying \eqref{eq:LS}.
The modifications of the proof are given in
Remark~\ref{rem:power-extension-proof}.
Here global existence and the density decay are assumed, rather than established. It would be interesting to construct global solutions satisfying
these assumptions.
\end{remark}

\subsection{Half-density formulation}\label{sec:halfdensity}

	We prove Theorem \ref{thm:main} using $\gamma = \kappa \kappa^{\ast}$, where the half-density $\kappa$ satisfies
	\[
	\ii \partial_t \kappa = H_{\kappa\kappa^{\ast}} \kappa.
	\]
	To describe the asymptotic behavior of the profile $\lambda(t):= \ee^{-\ii t \Delta_x/2}\kappa(t)$, we introduce the resonant equation at the level of half-density as
	\begin{equation}\label{eqn:res-eqn-00}
		\begin{aligned}
			\ii \partial_t \widehat{\nu}(t,k,y) &= (2\pi)^{-d} t^{-1} \bigl(V *_k \|\widehat{\nu}(t,\cdot)\|_{L^2_y}^2\bigr)(k) \widehat{\nu}(t,k,y) \\
			&\quad - (2\pi)^{-d} t^{-1} \int_{\mathbb R^d}\int_{\mathbb R^d} V(k-\ell) \overline{\widehat{\nu}(t,\ell,a)} \widehat{\nu}(t,\ell,y) \widehat{\nu}(t,k,a) \,\dd a\dd \ell,
		\end{aligned}
	\end{equation}
	where $V(k) =|k|^{-1}$.
	For each fixed $k$, the operator in the direct term is a scalar multiple of the identity on $L_y^2$, while the operator in the exchange term is self-adjoint on $L_y^2$.
	The equation \eqref{eqn:res-eqn-00} captures the leading contribution of order $t^{-1}$ to the profile equation in Fourier space.

	For $\nu=\mathcal F^{-1}\widehat\nu$, define
    \[
        \Gamma(\tau)=\nu(\ee^\tau)\nu(\ee^\tau)^*,\qquad \tau\ge0.
    \]
    By our convention, for $t \ge 1$, 
    \[
        \mathfrak F(\Gamma(\log t))
        =(2\pi)^{-d}\widehat\nu(t)\widehat\nu(t)^*.
    \]
    If $\widehat\nu$ solves \eqref{eqn:res-eqn-00}, the change of variables
    $\tau=\log t$ shows that $\mathfrak F(\Gamma)$ solves
    \eqref{eqn:res-eqn-gamma}.
	We first establish the following theorem for $\kappa$ and deduce Theorem \ref{thm:main} from it.

    We use the Banach space
    \begin{equation}\label{def:X}
        X:= L^2_{k,y} \cap C_b(\mathbb{R}^d; L^2_y), \qquad \| f \|_X:= \| f \|_{L^2_{k,y}} + \sup_{k\in\RR^d} \| f(k)\|_{L^2_y}.
    \end{equation}
    Here $C_b(\RR^d;L^2_y)$ denotes the space of bounded continuous $L^2_y$-valued functions, with the supremum norm.

	\begin{theorem}\label{thm:main-kappa}
		Let $V(x) = |x|^{-1}$, $d\ge2$ and $\sigma=\lfloor d/2\rfloor+1$. Take $\delta>0$ as in Theorem~\ref{thm:main}.
		Assume
		\[
		\|\langle\nabla\rangle^\sigma\kappa_0\|_{\mathfrak S^2} +\|\langle x\rangle^\sigma\kappa_0\|_{\mathfrak S^2} \le\epsilon_0
		\]
		with $\epsilon_0$ sufficiently small.
		Then \eqref{eqn:kappa} has a unique global solution $\kappa\in C([0,\infty);H_x^\sigma L_y^2)$ with $\kappa(0)=\kappa_0$ and $|\mathrm J(t)|^\sigma\kappa(t)\in C([0,\infty);\mathfrak S^2)$.
		Moreover, it satisfies
		\[
		\|\kappa(t)\|_{L_x^pL_y^2} \lesssim\epsilon_0\langle t\rangle^{-d(1/2-1/p)}
		\]
        for $2 \le p \le \infty$ and $t \ge 0$.
        In addition, there exists $\widehat\nu_1\in X$ with $\|\widehat\nu_1\|_X\lesssim\epsilon_0$ such that the resonant equation \eqref{eqn:res-eqn-00} has a unique global-in-time solution $\widehat\nu\in C^1([1,\infty);X)$ with $\widehat{\nu}(1)=\widehat{\nu}_1$. The equation holds in $L^2_y$ for every $k\in\mathbb{R}^d$ and $t\ge 1$. With $\nu(t)=\mathcal F^{-1}\widehat\nu(t)$, we have
		\[
		\|\widehat\lambda(t)-\widehat\nu(t)\|_X \lesssim\epsilon_0t^{-3\delta},\qquad \|\kappa(t)-\ee^{\ii t\Delta/2}\nu(t)\|_{\mathfrak S^2} \lesssim\epsilon_0t^{-3\delta},\qquad t\ge1.
		\]
	\end{theorem}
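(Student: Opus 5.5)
We would follow the Kato--Pusateri space-time resonance strategy, adapted to the half-density $\kappa(t,x,y)$, viewed as an $L^2_y$-valued function of $x$. The orbital variable $y$ is a spectator: all estimates are vector-valued in $L^2_y$, and the nonlinearity involves $y$ only through inner products. The nonlinearity is
\[
H_{\kappa\kappa^*}\kappa+\tfrac12\Delta\kappa=(V*\rho)\kappa-\mathsf X_{\kappa\kappa^*}\kappa,
\]
with $\rho(x)=\|\kappa(x,\cdot)\|_{L^2_y}^2$ and $(\mathsf X_{\kappa\kappa^*}\kappa)(x,y)=\int V(x-z)\langle\kappa(z),\kappa(x)\rangle_{L^2_a}\kappa(z,y)\,\dd z$.

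\textbf{Local theory and bootstrap norm.} First we prove local well-posedness in the norm $\|\langle\nabla\rangle^\sigma\kappa\|_{\mathfrak S^2}+\||\mathrm J(t)|^\sigma\kappa\|_{\mathfrak S^2}$ by a contraction argument. Here $\mathrm J(t)=x+\ii t\nabla$ commutes with the free flow, and $\sigma>d/2$ gives $H^\sigma_x\subset L^\infty_x$ vector-valued. We then run a bootstrap on the quantity
\[
\|\kappa\|_{\mathcal B_T}=\sup_{t\le T}\Bigl(\langle t\rangle^{-\delta}\bigl(\|\langle\nabla\rangle^\sigma\kappa\|_{\mathfrak S^2}+\||\mathrm J|^\sigma\kappa\|_{\mathfrak S^2}\bigr)+\|\widehat\lambda(t)\|_{X}\Bigr).
\]
The Klainerman--Sobolev type estimate $\|\kappa(t)\|_{L^\infty_xL^2_y}\lesssim t^{-d/2}\|\widehat\lambda\|_{L^\infty_kL^2_y}+t^{-d/2-\alpha}\||\mathrm J|^\sigma\kappa\|_{\mathfrak S^2}$ follows from the factorization $\ee^{\ii t\Delta/2}=M\,D\,\mathcal F\,M$. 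It gives the $L^p_xL^2_y$ decay, and hence $\|\rho\|_{L^\infty}\lesssim\epsilon^2t^{-d}$. From this we get $\|V*\rho\|_{L^\infty}\lesssim t^{-1}$. The exchange operator is bounded on $L^2_x$ with norm $\lesssim\|\kappa\|_{L^\infty L^2}\sup_x\|V(x-\cdot)\kappa(\cdot)\|_{L^2L^2}\lesssim t^{-1}$ by the Hardy-type bound. Energy estimates for $\langle\nabla\rangle^\sigma$ and $\mathrm J^\sigma$, after commuting $\mathrm J$ through the nonlinearity (the potentials are real and gauge-like), therefore give growth at most $t^{C\epsilon^2}\le t^{\delta}$. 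Uniqueness and nonnegativity are immediate from the half-density structure.

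\textbf{Profile equation and the main obstacle.} The key step is the uniform bound on $\widehat\lambda$ in $X$, that is, pointwise in $k$ with values in $L^2_y$. We write the Duhamel term in Fourier variables and insert the stationary phase decomposition $\kappa=MD\mathcal F M\lambda$. This gives $\ii\partial_t\widehat\lambda=t^{-1}\mathcal R[\widehat\lambda]+\mathcal E(t)$, where $\mathcal R$ is the right side of \eqref{eqn:res-eqn-00}. The remainder satisfies $\|\mathcal E(t)\|_X\lesssim\epsilon^3t^{-1-4\delta}$, coming from $M-1=O(|x|^2/t)$ and controlled by $\mathrm J^\sigma$ norms with the slack $t^{\delta}$. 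For the exchange term this requires a bilinear estimate for the nonlocal kernel $V(k-\ell)\langle\widehat\lambda(\ell),\widehat\lambda(k)\rangle\widehat\lambda(\ell,y)$. We also need the splitting $|\xi|^{-1}=|\xi|^{-1}\chi_{\le t^{\beta}}+\dots$ in order to gain decay from the singular frequency region. I expect this remainder estimate for the exchange term, in the sup-in-$k$ norm, to be the main obstacle.

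\textbf{Uniform bound and construction of $\widehat\nu$.} The leading term cannot be integrated away, since the exchange term is not a phase. It is, however, conservative pointwise: for fixed $k$, the operator $\widehat\nu(k)\mapsto(V*\|\widehat\nu\|^2)(k)\widehat\nu(k)-\int V(k-\ell)|\widehat\nu(\ell)\rangle\langle\widehat\nu(\ell)|\widehat\nu(k)\rangle\,\dd\ell$ is self-adjoint in $L^2_y$. Consequently $\partial_t\|\widehat\lambda(t,k)\|_{L^2_y}^2$ is controlled by $\mathcal E$ alone, which gives the uniform $X$ bound and closes the bootstrap.

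We then construct $\widehat\nu$ as a final-state problem. Let $U(t,s)$ be the resonant flow, which is globally well-posed in $X$ by Lemma~\ref{lem:resonant-GWP}, since $\|\widehat\nu(k)\|_{L^2_y}$ is conserved. Set $\widehat\nu_1=\lim_{T\to\infty}U(1,T)\widehat\lambda(T)$. The sequence is Cauchy because differentiating $U(1,t)\widehat\lambda(t)$ leaves only $\mathcal E$ plus a difference of cubic terms, and the Lipschitz bound of $U$ in $X$ grows at most like $t^{C\epsilon^2}$. Hence $\|\widehat\lambda(t)-\widehat\nu(t)\|_X\lesssim\epsilon_0t^{-3\delta}$.

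The $\mathfrak S^2$ estimate follows by Plancherel, since $\|\kappa-\ee^{\ii t\Delta/2}\nu\|_{\mathfrak S^2}=(2\pi)^{-d/2}\|\widehat\lambda-\widehat\nu\|_{L^2_{k,y}}$. Uniqueness of $\widehat\nu$ with $\widehat\nu(1)=\widehat\nu_1$ comes from the Lipschitz bound of the resonant flow.
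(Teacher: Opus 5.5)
Your global theory follows the paper's structure: a bootstrap in which the $H^\sigma_xL^2_y$ and $|\mathrm J|^\sigma$ norms may grow like $t^{\delta}$, the decay lemma from the free kernel, the decomposition $\ii\partial_t\widehat\lambda=t^{-1}\mathcal R[\widehat\lambda]+\mathcal E[\widehat\lambda]$, and a uniform $X$ bound from the self-adjointness of $A[\widehat\lambda(t)](k)$ on $L^2_y$.

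\textbf{Where you depart from the paper.} Your construction of $\widehat\nu$ is genuinely different.
\begin{itemize}
\item \emph{The paper's route.} It never pulls back along the nonlinear resonant flow. It uses the \emph{linear} propagator $\mathcal U_{\widehat\lambda}(t,s;k)$ generated by $t^{-1}A[\widehat\lambda(t)](k)$, which is unitary. Hence $\widehat\omega(t)=\mathcal U_{\widehat\lambda}(1,t)\widehat\lambda(t)$ converges to some $\widehat\omega_\infty$ with no loss. It then solves the final-state problem $\widehat\nu(t)=\mathcal U_{\widehat\lambda}(t,1)\widehat\eta(t)$, with $\widehat\eta(t)=\widehat\omega_\infty+\ii\int_t^\infty s^{-1}\mathcal U_{\widehat\lambda}(1,s)(A[\widehat\nu(s)]-A[\widehat\lambda(s)])\widehat\nu(s)\,\dd s$, by contraction in the norm $\sup_t t^{3\delta}\|\cdot\|_X$.
\item \emph{Your route is also valid.} Your limit $\widehat\nu_1=\lim_{T\to\infty}U(1,T)\widehat\lambda(T)$ works. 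Conservation of the $X$ norm keeps everything in a ball of radius $O(\epsilon_0)$. The difference bound for $A$ and a backward Gronwall argument give the Lipschitz constant $(s/t)^{C\epsilon_0^2}$ for $U(t,s)$. Since $\|\mathcal E(s)\|_X\lesssim\epsilon_0^3s^{-1-3\delta}$ beats this loss once $C\epsilon_0^2<\delta$, you get $\|U(t,T)\widehat\lambda(T)-\widehat\lambda(t)\|_X\lesssim\int_t^\infty (s/t)^{C\epsilon_0^2}\|\mathcal E(s)\|_X\,\dd s\lesssim\epsilon_0^3t^{-3\delta}$ uniformly in $T$. This gives both the Cauchy property and the rate.
\item \emph{What each buys.} Your route produces a resonant solution directly, with no fixed-point step. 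The paper's route uses the isometry of a linear flow instead. This avoids the $t^{C\epsilon_0^2}$ loss and any Lipschitz analysis of the nonlinear flow map. It also isolates $\widehat\omega_\infty$ as the analogue of the phase-corrected scalar state.
\end{itemize}

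\textbf{Smaller points.}
\begin{itemize}
\item The remainder estimate you flag as the main obstacle is routine. Write it in physical space with the phase factor $\ee^{\ii(x_1-x)\cdot(x_1-x_2)/t}-1$. Cauchy--Schwarz in $a$ then bounds the exchange integrand by the same product $\|\lambda(x_1)\|_{L^2_y}\|\lambda(x_2)\|_{L^2_y}\|\lambda(x-x_1+x_2)\|_{L^2_y}$ as the direct one. Hausdorff--Young and Hardy--Littlewood--Sobolev with $\widehat V(z)=c_d|z|^{1-d}$ give the $L^p_kL^2_y$ bound for all $2\le p\le\infty$ at once. No splitting of $|\xi|^{-1}$ is needed.
\item Your Hardy-type bound for $\mathsf X_\gamma$ fails in $d=2$, where $|x|^{-2}$ is not locally integrable. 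Use $|\gamma(x,z)|\le\rho(x)^{1/2}\rho(z)^{1/2}$ and the Schur test instead. This gives $\|\mathsf X_\gamma\|_{\mathrm{op}}\le\|V*\rho\|_{L^\infty}\lesssim\epsilon_0^2t^{-1}$.
\item Operator-norm bounds do not control the Leibniz terms where $\partial_x$ or $\mathrm J$ lands on $\rho$ or on the exchange kernel. These need Gagliardo--Nirenberg with $V$ split at $|x|=t$, as in the paper's energy lemma. That argument does yield the linear differential inequality with coefficient $\epsilon_0^2t^{-1}$ behind your $t^{C\epsilon_0^2}$ growth claim.
\end{itemize}
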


    The equation for $\widehat\lambda$ contains the oscillatory
    factor $\ee^{\ii t(k-r)\cdot\ell}$. For the Coulomb potential, we use Fourier inversion to write
    \[
        (2\pi)^{-d}\int_{\mathbb R^d}
        \ee^{\ii t(k-r)\cdot\ell}\widehat V(\ell)\,\dd\ell
        =
        V(t(k-r))
        =
        t^{-1}V(k-r).
    \]
    Formally, setting $\ell=0$ in the profile factors leads to 
    \eqref{eqn:res-eqn-00}. We will also use the form
    \[
        \ii\partial_t\widehat\nu(t,k)
        =
        t^{-1}A[\widehat\nu(t)](k)\widehat\nu(t,k),
    \]
    where $A[f](k)$ is defined in \eqref{def:A}.

    Put $G(\tau)=\mathfrak F(\Gamma(\tau))$. We can write
    \eqref{eqn:res-eqn-00} as
    \[
        \ii\partial_t\widehat\nu(t)
        =t^{-1}\bigl(V*_k\rho_{G(\log t)}-\mathsf X_{G(\log t)}\bigr)
        \widehat\nu(t).
    \]
    Using $G(\log t)=(2\pi)^{-d}\widehat\nu(t)\widehat\nu(t)^*$
    and $\partial_\tau=t\partial_t$, we obtain \eqref{eqn:res-eqn-gamma}.

    Finally, by the Schatten--H\"older inequality, 
    \[
    \begin{aligned}
    \|\gamma(t)
    -\ee^{\ii t\Delta_x/2}\Gamma(\log t)\ee^{-\ii t\Delta_x/2}
    \|_{\mathfrak S^1}
    \le
    \bigl(\|\kappa(t)\|_{\mathfrak S^2}
    +\|\nu(t)\|_{\mathfrak S^2}\bigr)
    \|\kappa(t)-\ee^{\ii t\Delta_x/2}\nu(t)\|_{\mathfrak S^2}.
    \end{aligned}
    \]
    Thus the scattering estimate for $\gamma$ follows from
    Theorem~\ref{thm:main-kappa}.

\subsubsection{Gauge freedom in the half-density formulation}
	Let $\gamma \ge 0$ be a trace-class operator and choose a half-density $\kappa \in \mathfrak{S}^2$ such that $\gamma = \kappa \kappa^*$.
	This factorization is not unique.
	For any unitary operator $U$, the half-density $\kappa U$ gives the same density matrix, since
    $(\kappa U)(\kappa U)^* = \kappa U U^* \kappa^* = \gamma$.
	We refer to this as gauge freedom.

	We first describe the gauge transformation formally. Let $U(t)$ be a family of unitary operators satisfying
	\begin{equation}\label{eq:gauge-ev}
		\ii \partial_t U^* = A(t) U^* \quad \Longleftrightarrow \quad \ii \partial_t U = -U A(t)
	\end{equation}
    where $A(t) = A(t)^{\ast}$.
	By the product rule, right multiplication by $U(t)$ transforms the left-gauge equation into the general gauge equation
    \begin{equation}\label{eq:general-gauge}
        \ii\partial_t\kappa = H_\gamma\kappa-\kappa A(t), \qquad \gamma=\kappa\kappa^*.
    \end{equation}
    Since $U(t)$ is unitary, this transformation preserves the density matrix.
	Using $H_{\gamma}^{\ast} = H_{\gamma}$ and $A(t)^{\ast} = A(t)$, we compute
	\begin{equation}\label{eq:gamma-solves-HF}
		\ii \partial_t \gamma = (\ii \partial_t \kappa)\kappa^* - \kappa (\ii \partial_t \kappa)^* = (H_\gamma \kappa - \kappa A)\kappa^* - \kappa (H_\gamma \kappa - \kappa A)^* = H_\gamma \gamma - \gamma H_\gamma.
	\end{equation}
	Thus $\gamma$ satisfies the time-dependent Hartree--Fock equation \eqref{eq:HF} independently of the choice of $A(t)$.

\subsubsection{The left gauge}
We now specify the gauge used for the estimates in the paper.

We call the choice $A(t) =0$ the \textit{left gauge}. 
	Then
	\begin{equation}\label{eqn:kappa}
		\ii\partial_t\kappa=H_\gamma\kappa,
	\end{equation}
where the commutator structure is removed.	
	At the kernel level, this becomes
	\begin{align}\label{eqn:kappa-kernel}
		\ii\partial_t\kappa(t,x,y) &= -\frac12\Delta_x\kappa(t,x,y) + (V*\rho_{\kappa\kappa^*})(t,x)\kappa(t,x,y) - ({\mathsf{X}}_{\kappa\kappa^*}\kappa)(t,x,y).
	\end{align}
	Here
	\[
	\rho_{\kappa\kappa^*}(t,x) = \int_{\RR^d}|\kappa(t,x,a)|^2\,\dd a
	\]
	and
	\[
	({\mathsf{X}}_{\kappa\kappa^*}\kappa)(t,x,y) = \int_{\RR^d} V(x-z)\gamma(t,x,z)\kappa(t,z,y)\,\dd z.
	\]
	The kernel of $\gamma = \kappa \kappa^{\ast}$ is given by
	\[
	\gamma(t,x,z) = \int_{\RR^d} \kappa(t,x,a)\overline{\kappa(t,z,a)}\,\dd a.
	\]
    Substituting this into the exchange term, we obtain
	\begin{equation}\label{eq:exchange-left}
		({\mathsf{X}}_{\kappa\kappa^*}\kappa)(t,x,y) = \int_{\RR^d}\int_{\RR^d} V(x-z) \kappa(t,x,a) \overline{\kappa(t,z,a)} \kappa(t,z,y) \,\dd a\,\dd z.
	\end{equation}
	Thus the equation closes in $\kappa$.

	In this gauge, the linear Schr\"odinger evolution acts only in $x$, so
	$\kappa$ satisfies an $L^2_y$-valued Schr\"odinger equation.
A similar half-density formulation has proved useful for
extending scalar dispersive techniques to the operator
setting of the Kohn--Sham equation \cite{KLYY2026arxiv}.

\begin{remark}
One may also choose $A(t)=H_\gamma(t)$, which yields
a nonlinear von Neumann-type equation for the half-density:
\[
    \ii\partial_t\kappa=[H_\gamma,\kappa].
\]
This commutator formulation is the Hartree--Fock analogue of the half-density evolution used by Pusateri and Sigal \cite[Section~3.1]{Pusateri2021}.
\end{remark}

\begin{remark}
A related square-root formulation is useful for kinetic equations.
For instance, Ionescu, Pausader, Wang, and Widmayer \cite{IPWW2022}
introduce $\mu=\sqrt f$ and use $L^2$-based energy estimates to prove small-data modified scattering for the
three-dimensional Vlasov--Poisson system.
There, the identity $\|\mu\|_{L^2}^2=\|f\|_{L^1}$
plays a role analogous to the trace identity above.
\end{remark}

\subsubsection{Why the half-density formulation is useful}

Although $\gamma$ is trace class and hence Hilbert--Schmidt,
its density is not, in general, equal to
$\|\gamma(x,\cdot)\|_{L^2_y}^2$.
The factorization $\gamma=\kappa\kappa^*$ instead gives
\[
    \rho_\gamma(x)=\|\kappa(x,\cdot)\|_{L^2_y}^2,
    \qquad
    \operatorname{Tr}\gamma=\|\kappa\|_{\mathfrak S^2}^2,
\]
where the first identity holds for almost every $x$.
Thus positivity is built into the factorization, and the density
can be estimated through mixed norms of $\kappa$ while working
in the Hilbert space of Hilbert--Schmidt operators.

The exchange term also admits an $L^2_y$-valued formulation.
Suppressing the time variable, set $u(x):=\kappa(x,\cdot)$ and write
\[
    (|g\rangle\langle g|)h
    :=g\langle g,h\rangle_{L^2_y}
    =g\int_{\mathbb R^d}\overline{g(a)}h(a)\,\dd a.
\]
For $V(x)=|x|^{-1}$ and $d\ge2$, define
\[
    M_\kappa(x)
    :=\int_{\mathbb R^d}
        V(x-z)|u(z)\rangle\langle u(z)|\,\dd z.
\]
Since
$\||u(z)\rangle\langle u(z)|\|_{\mathfrak S^1(L^2_y)}
=\|u(z)\|_{L^2_y}^2$,
the assumptions $\rho_\gamma\in L^1$ and
$V\in L^1+L^\infty$ ensure that this integral converges
in $\mathfrak S^1(L^2_y)$ for almost every $x$.
It defines a positive self-adjoint operator satisfying
\[
    \|M_\kappa(x)\|_{\mathcal B(L^2_y)}
    \le (V*\rho_\gamma)(x).
\]
Moreover, since
$\gamma(x,z)=\langle u(z),u(x)\rangle_{L^2_y}$,
the kernel formula \eqref{eq:exchange-left} gives
\[
    (\mathsf X_\gamma\kappa)(x,\cdot)
    =\int_{\mathbb R^d}
        V(x-z)u(z)\langle u(z),u(x)\rangle_{L^2_y}\,\dd z
    =M_\kappa(x)u(x).
\]
Consequently,
\[
    \|(\mathsf X_\gamma\kappa)(x,\cdot)\|_{L^2_y}
    \le (V*\rho_\gamma)(x)\|\kappa(x,\cdot)\|_{L^2_y}
\]
for almost every $x$.
Thus the direct and exchange terms are controlled by the same
scalar Hartree potential.
The trilinear version of this estimate, established in the proof
of Lemma~\ref{lem:exchange-term}, is used in the nonlinear
and weighted energy estimates.

The same operator-valued structure appears in the resonant
generator $A[f](k)$ defined in \eqref{def:A}.
For $f\in X$, it takes the form
\[
    A[f](k)
    =(2\pi)^{-d}
    \left[
        \bigl(V*\|f(\cdot)\|_{L^2_y}^2\bigr)(k)I
        -\int_{\mathbb R^d}
            V(k-\ell)|f(\ell)\rangle\langle f(\ell)|\,\dd\ell
    \right],
\]
where $I$ is the identity on $L^2_y$.
In particular, for every $k\in\mathbb R^d$ and $h\in L^2_y$,
\[
    \|A[f](k)h\|_{L^2_y}
    \lesssim
    \bigl(V*\|f(\cdot)\|_{L^2_y}^2\bigr)(k)\|h\|_{L^2_y},
    \qquad
    A[f](k)^*=A[f](k).
\]
The bound gives the local Lipschitz estimates needed
for the resonant evolution and the final-state construction.
The self-adjointness of $A[f](k)$ yields conservation of
$\|\widehat\nu(t,k)\|_{L^2_y}$ along the resonant flow
and unitarity of the associated propagator.
This conservation law allows the resonant solution to be extended globally.
These properties are used to construct the modified profile and solve the final-state problem in the proof of modified scattering.

\begin{remark}\label{rmk:infinite-system}
An alternative formulation is obtained from the spectral
decomposition
\[
    \gamma_0=\sum_{j=1}^{\infty}
    a_j|u_{0,j}\rangle\langle u_{0,j}|,
\]
where $\{u_{0,j}\}_{j\ge1}$ is an orthonormal system in
$L^2(\mathbb R^d)$, $a_j\ge0$, and $\sum_j a_j<\infty$.
The corresponding infinite coupled system is
\[
\begin{aligned}
    \ii\partial_t u_j
    &=-\frac12\Delta u_j
      +\sum_{k=1}^{\infty}a_k
      \left[
        (V*|u_k|^2)u_j
        -(V*(u_j\overline{u_k}))u_k
      \right],\\
    u_j(0)&=u_{0,j},
    \qquad j\ge1,
\end{aligned}
\]
with
$\gamma(t)=\sum_{j=1}^{\infty}
a_j|u_j(t)\rangle\langle u_j(t)|$.
Wada \cite{Wada2002,Wada1999} studied the long-time behavior of solutions to finite coupled systems of this type.
The half-density formulation used here provides a systematic framework for analyzing the dynamics directly at the operator level, without introducing an explicit spectral decomposition.
\end{remark}

\subsection{Strategy of proof}

Our analysis combines the half-density formulation with the space-time resonance approach of \cite{Kato2011}.
The bootstrap argument also builds on the energy and decay estimates for the infinite-rank Hartree equation in \cite{NguyenYou2025}.
We regard $\kappa(t,x,\cdot)$ as an $L_y^2$-valued function of $x\in\mathbb R^d$, keeping the auxiliary variable $y$ in $L^2$ throughout the analysis.
This allows us to extend the scalar Hartree bootstrap
framework to the corresponding Hilbert-space-valued spaces.
Thus, the bootstrap norm \eqref{def:bootstrapnorm} combines
the $L^2_{x,y}$ norm, Sobolev regularity in $x$, spatial
moments of the interaction profile
$\lambda(t)=\ee^{-\ii t\Delta_x/2}\kappa(t)$ measured through
$\mathrm J_x(t)$, and dispersive decay in $L_x^\infty L_y^2$.
The Sobolev and weighted norms are allowed to grow like
$t^\delta$, while the dispersive norm retains the optimal decay rate $t^{-d/2}$.

To obtain the dispersive estimate needed to close the bootstrap, we analyze the interaction profile in Fourier space.
Following the heuristic decomposition in
\eqref{eqn:hf-profile-heuristic}, we separate the leading resonant contribution of order $t^{-1}$ from the remainder:
\[
    \ii\partial_t\widehat\lambda(t,k,y)
    =
    t^{-1}\mathcal R[\widehat\lambda](t,k,y)
    +\mathcal E[\widehat\lambda](t,k,y),
\]
where $\widehat\lambda$ denotes the Fourier transform of $\lambda$ in the $x$ variable. Both the direct and exchange terms contribute at this order; see Subsection~\ref{sec:Decomp of profile eqn}. Here $\mathcal R[f](k)=A[f](k)f(k)$, where $A[f](k)$ is the self-adjoint operator on $L_y^2$ defined in \eqref{def:A}, and $\mathcal E$ denotes the remainder. Under the bootstrap assumption $\mathcal B_T(\kappa)\lesssim\epsilon_0$, the weighted estimates give
\[
    \|\mathcal E[\widehat\lambda](t)\|_X
    \lesssim\epsilon_0^3t^{-1-3\delta},
    \qquad 1\le t\le T,
\]
where $X$ is defined in \eqref{def:X}.
In particular, this bound is integrable in time.
Since $A[\widehat\lambda(t)](k)$ is self-adjoint, the associated propagator is unitary on $L_y^2$ at each frequency and preserves the $X$ norm. Duhamel's formula therefore gives a uniform-in-time bound
for the Fourier profile in $X$. Together with the weighted estimates, this bound yields the optimal dispersive decay and closes the bootstrap for sufficiently small $\epsilon_0$. We thus obtain a global solution, and the remainder estimate holds for all $t\ge1$.

For the resonant equation \eqref{eqn:res-eqn-00}, global well-posedness for any initial datum in $X$ follows directly from the local Lipschitz bounds and a conservation law. Indeed, self-adjointness preserves the $L_y^2$ norm at each frequency. Consequently, both the $L^2_{k,y}$ and $L^\infty_kL_y^2$ norms, and thus the $X$ norm, are conserved. This conservation law then extends the local solution globally.

To remove the leading evolution from the full profile equation, we use the unitary propagator $\mathcal U_{\widehat\lambda}(t,s;k)$ generated by the leading-order operator $t^{-1}A[\widehat\lambda(t)](k)$, with $\mathcal U_{\widehat\lambda}(s,s;k)=I$ (see \eqref{def:Uf-propagator}). Suppressing the frequency variable, set
\[
    \widehat\omega(t)
    =
    \mathcal U_{\widehat\lambda}(1,t)\widehat\lambda(t).
\]
Then
\[
    \ii\partial_t\widehat\omega(t)
    =
    \mathcal U_{\widehat\lambda}(1,t)
    \mathcal E[\widehat\lambda](t).
\]
Since the propagator preserves the $X$ norm and the remainder is integrable, $\widehat\omega(t)$ converges to some $\widehat\omega_\infty\in X$, with
\[
    \|\widehat\omega(t)-\widehat\omega_\infty\|_X
    \lesssim\epsilon_0^3t^{-3\delta}.
\]
This propagator plays the role of the scalar phase correction discussed above.
The resulting approximation $\mathcal U_{\widehat\lambda}(t,1)\widehat\omega_\infty$ is generated by the full profile $\widehat\lambda$ and
need not itself solve the resonant equation.

For the Hartree equation, one passes from this approximation to an exact resonant solution by separating the accumulated
phase into a logarithmic term and a convergent correction. After absorbing the limiting phase correction into the
asymptotic profile, one obtains the explicit resonant solution described in \eqref{eq:resonance scalar}. This is possible because the resonant Hartree potential depends only on the conserved density, given by $|\widehat g|^2$ in the scalar case.

For Hartree--Fock, conservation of the density does not determine the exchange operator, which also depends on the evolving off-diagonal kernel. Thus, this scalar phase argument does not in general
provide an exact resonant solution. We instead construct such a solution through a final-state problem. Write
\[
    \widehat\nu(t)
    =
    \mathcal U_{\widehat\lambda}(t,1)\widehat\eta(t),
    \qquad
    \widehat\eta(t)\longrightarrow\widehat\omega_\infty
     \quad \text{in } X \text{ as }t\to\infty.
\]
The resonant equation for $\widehat\nu$ is then equivalent to
\[
    \widehat\eta(t)
    =\widehat\omega_\infty+\ii\int_t^\infty s^{-1}
    \mathcal U_{\widehat\lambda}(1,s)
    \bigl(A[\widehat\nu(s)]-A[\widehat\lambda(s)]\bigr)
    \widehat\nu(s)\,\dd s.
\]
The difference estimates for $A$, the convergence rate of $\widehat\omega$, and the smallness of the data allow us to solve this integral equation by a contraction argument in a time-weighted $X$ space.
This determines the resonant initial datum
$\widehat\nu(1)=\widehat\eta(1)$ and gives
\[
    \|\widehat\lambda(t)-\widehat\nu(t)\|_X
    \lesssim\epsilon_0t^{-3\delta}.
\]
Finally, setting
$\nu(t)=\mathcal F^{-1}\widehat\nu(t)$ and
$\Gamma(\tau)=\nu(\ee^\tau)\nu(\ee^\tau)^*$, with $\tau=\log t$, we obtain the resonant density matrix $\mathfrak F(\Gamma)$ solving \eqref{eqn:res-eqn-gamma}. The half-density approximation then yields the trace-norm modified scattering statement by the Schatten--H\"older inequality.

\medskip

For the nonexistence of linear scattering
(Theorem~\ref{thm:no-linear-scattering}), we adapt Glassey's long-range contradiction argument \cite{Glassey1997a} to the operator setting. Under the optimal density decay bound, linear scattering would allow us to replace the evolving profile by its limit in the leading $t^{-1}$ commutator.
Convergence of the profile then requires this limiting commutator to vanish.
Proposition~\ref{prop:stationary-coulomb} excludes such stationarity for nonnegative states of rank at least two:
a localized virial identity, together with homogeneity, forces the Hartree--Fock interaction energy of a stationary
state to vanish, whereas this energy is strictly positive for such states.

	\subsection{Notation}
	Our convention for the \emph{Fourier transform} is
	\[
	(\mathcal{F}_x V)(k)\equiv\widehat V(k):=\int_{\mathbb R^d}\ee^{-\ii k\cdot x}V(x)\,\dd x, \qquad V(x)=\frac1{(2\pi)^d}\int_{\mathbb R^d}\ee^{\ii k\cdot x}\widehat V(k)\,\dd k .
	\]

	For an integral operator $A$ with kernel $A(x,y)$, define $\widehat{A}$ by $\widehat{A}f= \mathcal{F} (Af)$ for $f\in L^2(\mathbb{R}^d)$.
	Its integral kernel is given by
	\begin{equation}\label{def:Fourier-right-variable}
		\widehat{A}(k,y)= \int_{\RR^d} \ee^{-\ii k \cdot x} A(x,y)  \, \dd x .
	\end{equation}
	For kernels of operators, we often omit the subscript $x$ and write $\mathcal F$ for the Fourier transform in the left (output) variable only.

	For an integral operator $A$ with kernel $A(x,y)$, let $\mathfrak{F}(A):= \mathcal{F} A\mathcal{F}^{-1}$.
	Its integral kernel is given by
	\begin{equation}\label{def:Fourier-operator}
		(\mathfrak{F}(A))(k,p)= (2\pi)^{-d}\int_{\RR^d \times \RR^d} \ee^{-\ii k \cdot x + \ii p \cdot y } A(x,y)  \, \dd x \, \dd y.
	\end{equation}

	Throughout the paper, all complex inner products on Hilbert spaces are taken to be linear in the second argument.
	In particular,
	\[
	\langle f,g\rangle_{L^2_{x,y}} := \iint \overline{f(x,y)}g(x,y)\, \dd x\, \dd y.
	\]

    For a multi-index $\beta$, write
    \[
        \mathrm{J}_x (t):= x + \ii t \nabla_x , \qquad \mathrm{J}_x^{\beta}(t) := \ee^{\ii t \Delta_x/2} x^{\beta} \ee^{-\ii t \Delta_x/2},
    \]
    which are obtained by conjugating the position operators along the free Schr\"odinger flow.

	For $m\ge0$, we introduce the operators
	\[
	|\mathrm J_x (t)|^m:= \ee^{\ii t\Delta_x/2}\, |x|^m \,\ee^{-\ii t\Delta_x/2} \quad \text{and} \quad \langle\mathrm J_x (t)\rangle^m:= \ee^{\ii t\Delta_x/2}\, \langle x\rangle^m \,\ee^{-\ii t\Delta_x/2}.
	\]

    For a Banach space $E$, we denote by $\mathcal B(E)$ the space of
    bounded linear operators on $E$, equipped with the operator norm.
    For $B\in\mathcal B(L_y^2)$, we write
    \[
        \|B\|_{\mathcal B(L_y^2)}=\|B\|_{\mathrm{op}}
        :=\sup_{\|h\|_{L_y^2}=1}\|Bh\|_{L_y^2}.
    \]

	For $1\le p <\infty$, we denote by $\mathfrak{S}^{m,p}$ the Schatten--Sobolev class on $L^2(\mathbb{R}^d)$, with norm
	\[
	\| A \|_{\mathfrak{S}^{m,p}} = \| \langle \nabla \rangle^m A \langle \nabla \rangle^m \|_{\mathfrak{S}^p}.
	\]

	\subsection{Organization of the paper}
    Section~2 presents the basic estimates and local theory. In Section~3, we derive the weighted energy bounds. Section~4 identifies the resonant equation and constructs the associated unitary propagator, and Section~5 estimates the remainder and proves the bootstrap estimate. In Section~6, we solve the resonant final-state problem and prove modified scattering. Section~\ref{sec:no-scattering} classifies stationary solutions of the resonant equation and proves the absence of linear scattering. Appendix~\ref{app:Subcritical HFE} treats short-range interactions.

\subsection*{Acknowledgment}
M. Kawamoto was partially supported by JSPS KAKENHI Grant Numbers JP24K06796 and JP26K00612. 
J. Lee was partially supported by the Global-Learning \& Academic research institution for Master's-PhD students, and Postdocs (G-LAMP) Program of the National Research Foundation of Korea (NRF) grant funded by the Ministry of Education (No. RS-2025-25442355), the NRF grant funded by the Korea government (MSIT) (No. RS-2026-25589357), and a grant from Kyung Hee University in 2026 (KHU-20262263). C.Yang was supported by the National Research Foundation of Korea(NRF) grant funded by the Korea government(MSIT) (RS-2026-25587425).

\subsection*{AI disclosure}
For the present work, we used AI-tools GPT-6 Astra and GPT-6.1 Sol for proofreading and language editing. The authors reviewed all revisions independently, and take full responsibility for the mathematical content of the manuscript.

\section{Preliminaries}
\subsection{Some Lemmas}
	\begin{lemma}\label{lem:decay}
		Let $\lambda(t) = \ee^{-\ii t \Delta_x/2 } \kappa(t)$ be the profile of $\kappa(t)$.
		Fix $ 0 < \theta \le 1$ and $m > d/2 + 2\theta$.
		Then we have
		\[
		\| \kappa(t) \|_{L^{\infty}_x L^2_y} \lesssim t^{-d/2} \| \widehat{\lambda}(t) \|_{L^{\infty}_k L^2_y} + t^{-d/2 - \theta} \| \langle x \rangle^m \lambda(t) \|_{L^2_{x,y}},
		\]
		uniformly for $t \ge 1$.
	\end{lemma}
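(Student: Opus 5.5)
We follow the standard scalar argument (Kato--Pusateri, Hayashi--Naumkin) and carry the auxiliary variable $y$ along as an $L^2_y$-valued parameter. The starting point is the factorization of the free Schr\"odinger group, $\ee^{\ii t\Delta_x/2}=M(t)D(t)\mathcal F M(t)$. Here $M(t)$ is multiplication by $\ee^{\ii|x|^2/(2t)}$ and $D(t)$ is the $L^2$-normalized dilation $(D(t)f)(x)=(\ii t)^{-d/2}f(x/t)$ (up to harmless $2\pi$ constants). Applied to $\kappa(t)=\ee^{\ii t\Delta_x/2}\lambda(t)$, which acts only in $x$, this gives for a.e.\ $y$
\[
\kappa(t,x,y)=c_d\,t^{-d/2}\ee^{\ii|x|^2/(2t)}\,\mathcal F_x\bigl[\ee^{\ii|\cdot|^2/(2t)}\lambda(t,\cdot,y)\bigr](x/t).
\]
We then split $\ee^{\ii|z|^2/(2t)}=1+(\ee^{\ii|z|^2/(2t)}-1)$. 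The first part produces exactly $c_d t^{-d/2}\ee^{\ii|x|^2/(2t)}\widehat\lambda(t,x/t,y)$. Taking the $L^2_y$ norm and then the supremum in $x$ bounds it by $t^{-d/2}\|\widehat\lambda(t)\|_{L^\infty_kL^2_y}$.

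For the remainder, the key step is the pointwise bound
\[
\|\mathcal F_x[g](k)\|_{L^2_y}\le \int\|g(z,\cdot)\|_{L^2_y}\,\dd z ,
\]
valid for $L^2_y$-valued $g$. It follows from Minkowski's integral inequality, since $|\ee^{-\ii k\cdot z}|=1$. We apply it with $g(z,y)=(\ee^{\ii|z|^2/(2t)}-1)\lambda(t,z,y)$. For $0<\theta\le1$ we have $|\ee^{\ii s}-1|\le 2|s|^{\theta}$, so $|\ee^{\ii|z|^2/(2t)}-1|\lesssim t^{-\theta}|z|^{2\theta}$. Hence the remainder is bounded uniformly in $x$ by
\[
t^{-d/2-\theta}\int_{\RR^d}|z|^{2\theta}\|\lambda(t,z,\cdot)\|_{L^2_y}\,\dd z .
\]
Cauchy--Schwarz in $z$, after inserting $\langle z\rangle^{m}\langle z\rangle^{-m}$, controls this by
\[
t^{-d/2-\theta}\,\Bigl(\int\langle z\rangle^{4\theta-2m}\dd z\Bigr)^{1/2}\|\langle x\rangle^m\lambda(t)\|_{L^2_{x,y}} .
\]
The integral is finite precisely because $2m-4\theta>d$, i.e.\ $m>d/2+2\theta$.

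We expect no step to be a serious obstacle. The only points needing care are the following.
\begin{enumerate}
\item Justifying the factorization and the interchange of the $y$-variable with $\mathcal F_x$. This is done by working first with $\lambda(t)$ in a dense class (Schwartz in $x$, valued in $L^2_y$) and then extending by density, since both sides are continuous in the norms on the right.
\item Checking that the main term carries the full $L^\infty_kL^2_y$ norm after the change of variables $k=x/t$, so that the constants are uniform in $t\ge1$.
\end{enumerate}
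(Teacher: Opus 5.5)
Your proposal is correct and follows essentially the same route as the paper. Both arguments use the free-kernel representation $\kappa=(2\pi\ii t)^{-d/2}\ee^{\ii|x|^2/(2t)}\bigl(\widehat\lambda(t,x/t,\cdot)+\cdots\bigr)$, split off the main term, bound the remainder via $|\ee^{\ii|z|^2/(2t)}-1|\lesssim t^{-\theta}|z|^{2\theta}$ together with Minkowski in $y$, and close with Cauchy--Schwarz using $m>d/2+2\theta$.
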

	\begin{proof}
		For $t>0$, we use the free Schr\"odinger kernel to write
		\[
		\begin{aligned}
			\kappa(t,x,y)&=(2\pi\ii t)^{-d/2}\ee^{\ii|x|^2/(2t)} \left (\widehat\lambda(t,x/t,y) +\int\ee^{-\ii x\cdot z/t} \bigl(\ee^{\ii|z|^2/(2t)}-1\bigr)\lambda(t,z,y)\,\dd z\right).
		\end{aligned}
		\]
        For $0\le\theta\le1$, we have
        $|\ee^{\ii|z|^2/(2t)}-1|\lesssim
        t^{-\theta}|z|^{2\theta}$. Hence
        \[
        \begin{aligned}
        \|\kappa(t)\|_{L_x^\infty L_y^2}
        &\lesssim t^{-d/2}
        \|\widehat\lambda(t)\|_{L_k^\infty L_y^2}
        +t^{-d/2-\theta}
        \||x|^{2\theta}\lambda(t)\|_{L_x^1L_y^2}\\
        &\lesssim t^{-d/2}
        \|\widehat\lambda(t)\|_{L_k^\infty L_y^2}
        +t^{-d/2-\theta}
        \|\langle x\rangle^m\lambda(t)\|_2,
        \end{aligned}
        \]
        where the last inequality follows from Cauchy--Schwarz since
        $m>d/2+2\theta$.
	\end{proof}

    \begin{lemma}\label{lem:potential}
        Let $d\ge 2$, $V(x) = |x|^{-1}$, and $f\in L^1(\mathbb{R}^d) \cap L^{\infty}(\mathbb{R}^d)$. Then
        \[
            \|V*f\|_{L^\infty}
            \lesssim \|f\|_{L^1}^{1-1/d}\|f\|_{L^\infty}^{1/d}.
        \]
    \end{lemma}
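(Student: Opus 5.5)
The plan is the standard near/far splitting of the convolution integral, with the splitting radius optimized. Fix $x\in\RR^d$ and a radius $R>0$ to be chosen later. We split
\[
    (V*f)(x)=\int_{|x-y|<R}\frac{f(y)}{|x-y|}\,\dd y+\int_{|x-y|\ge R}\frac{f(y)}{|x-y|}\,\dd y=:I_1+I_2 .
\]

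For the near part $I_1$ we bound $f$ by $\|f\|_{L^\infty}$. Since $d\ge2$, the singularity $|z|^{-1}$ is locally integrable, and polar coordinates give
\[
    \int_{|z|<R}|z|^{-1}\,\dd z=|\mathbb S^{d-1}|\,\frac{R^{d-1}}{d-1}.
\]
Hence $|I_1|\lesssim R^{d-1}\|f\|_{L^\infty}$.

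For the far part $I_2$ we use $|x-y|^{-1}\le R^{-1}$ on the region of integration, which gives $|I_2|\le R^{-1}\|f\|_{L^1}$.

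Both bounds are uniform in $x$, so
\[
    \|V*f\|_{L^\infty}\lesssim R^{d-1}\|f\|_{L^\infty}+R^{-1}\|f\|_{L^1}.
\]
If $f=0$ almost everywhere there is nothing to prove, so assume $\|f\|_{L^\infty}>0$. We balance the two terms by choosing $R^d=\|f\|_{L^1}/\|f\|_{L^\infty}$. Then both terms equal
\[
    \|f\|_{L^1}^{1-1/d}\,\|f\|_{L^\infty}^{1/d},
\]
which is the claimed estimate.

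There is no real obstacle here. The only points to check are that the restriction $d\ge2$ is exactly what makes $|z|^{-1}$ locally integrable near the origin (this is why the hypothesis appears), and that the choice of $R$ is legitimate in the degenerate case, which is handled by assuming $f\neq0$.
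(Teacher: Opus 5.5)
Your proposal is correct and is essentially the paper's argument: the paper splits $V=V_{<R}+V_{\ge R}$ and applies Young's inequality with $\|V_{<R}\|_{L^1}\lesssim R^{d-1}$ and $\|V_{\ge R}\|_{L^\infty}=R^{-1}$. It then chooses the same radius $R=(\|f\|_{L^1}/\|f\|_{L^\infty})^{1/d}$ after setting aside $f=0$.
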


    \begin{proof}
        We may assume $f \neq 0$. For $R>0$, write
        \[
            V=V_{<R}+V_{\ge R},\qquad
            V_{<R}:=V\mathbf1_{\{|x|<R\}},\qquad
            V_{\ge R}:=V\mathbf1_{\{|x|\ge R\}}.
        \]
        Since $\|V_{<R}\|_{L^1}\lesssim R^{d-1}$ and
        $\|V_{\ge R}\|_{L^\infty}=R^{-1}$, Young's inequality gives
        \[
            \|V*f\|_{L^\infty}
            \lesssim R^{d-1}\|f\|_{L^\infty}+R^{-1}\|f\|_{L^1}.
        \]
        Choosing $R=(\|f\|_{L^1}/\|f\|_{L^\infty})^{1/d}$ proves the estimate.
    \end{proof}

    For two integral kernels $f$ and $g$, define
    \[
        \rho_{fg^*}(x) := \int_{\RR^d} f(x,a) \overline{g(x,a)}  \, \dd a.
    \]
	We next control the nonlinear direct term.
	\begin{lemma}[Direct term]
		\label{lem:direct}
		Let $d \ge 2$.
		For $f, g \in (L^2_x \cap L^{\infty}_x) L^2_y$ and $h \in L^2_{x,y}$, we have
		\begin{equation}\label{est:direct-0}
			\| (V \ast \rho_{fg^{\ast}})h \|_{L^2_{x,y}} \lesssim \| f \|_{L^2_{x,y}}^{(d-1)/d} \| g \|_{L^2_{x,y}}^{(d-1)/d} \| f \|_{L^{\infty}_xL^2_y}^{1/d} \| g \|_{L^{\infty}_x L^2_y}^{1/d} \|h \|_{L^2_{x,y}}.
		\end{equation}
		Let $\sigma = \lfloor d/2 \rfloor +1$.
		If, in addition, $f,g,h \in H^{\sigma}_x L^2_y$, then
		\begin{equation}\label{est:direct-Hsigma}
			\| (V \ast \rho_{fg^{\ast}})h \|_{H^{\sigma}_x L^2_y} \lesssim \| f \|_{H^{\sigma}_x L^2_y} \| g \|_{H^{\sigma}_x L^2_y} \| h \|_{H^{\sigma}_x L^2_y}.
		\end{equation}
	\end{lemma}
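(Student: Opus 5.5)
The first bound \eqref{est:direct-0} follows from Lemma~\ref{lem:potential} combined with Cauchy--Schwarz in the auxiliary variable. Since $V\ast\rho_{fg^*}$ is a multiplication operator in $x$ alone, we have
\[
\|(V\ast\rho_{fg^*})h\|_{L^2_{x,y}}\le\|V\ast\rho_{fg^*}\|_{L^\infty_x}\|h\|_{L^2_{x,y}}.
\]
By Cauchy--Schwarz in $a$, $|\rho_{fg^*}(x)|\le\|f(x)\|_{L^2_y}\|g(x)\|_{L^2_y}$. Hence $\|\rho_{fg^*}\|_{L^1}\le\|f\|_{L^2_{x,y}}\|g\|_{L^2_{x,y}}$ and $\|\rho_{fg^*}\|_{L^\infty}\le\|f\|_{L^\infty_xL^2_y}\|g\|_{L^\infty_xL^2_y}$. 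Since $V\ast\rho_{fg^*}$ is dominated pointwise by $V\ast|\rho_{fg^*}|$, Lemma~\ref{lem:potential} applied to $|\rho_{fg^*}|$ gives exactly the stated product of powers.

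For \eqref{est:direct-Hsigma}, distribute derivatives with the Leibniz rule. For $|\alpha|\le\sigma$ we write
\[
\partial^\alpha\bigl[(V\ast\rho_{fg^*})h\bigr]=\sum_{\beta\le\alpha}c_{\alpha\beta}\,(V\ast\partial^\beta\rho_{fg^*})\,\partial^{\alpha-\beta}h ,
\]
and $\partial^\beta\rho_{fg^*}$ is in turn a sum of $\rho_{(\partial^{\beta_1}f)(\partial^{\beta_2}g)^*}$ over $\beta_1+\beta_2=\beta$. Each term is a trilinear expression in $F=\partial^{\beta_1}f$, $G=\partial^{\beta_2}g$, $H=\partial^{\alpha-\beta}h$ with $|\beta_1|+|\beta_2|+|\alpha-\beta|\le\sigma$. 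It suffices to show, for each such term,
\[
\|(V\ast\rho_{FG^*})H\|_{L^2_{x,y}}\lesssim\|f\|_{H^\sigma_xL^2_y}\|g\|_{H^\sigma_xL^2_y}\|h\|_{H^\sigma_xL^2_y}.
\]

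The tool is the Hardy--Littlewood--Sobolev inequality: $V=|x|^{-1}$ maps $L^{p}\to L^{q}$ with $1/q=1/p-1/d$. We also use the vector-valued Sobolev embeddings $H^{s}_xL^2_y\subset L^r_xL^2_y$ for $s\ge0$ and $1/r\ge 1/2-s/d$, together with $H^\sigma_xL^2_y\subset L^\infty_xL^2_y$ (valid since $\sigma>d/2$). By Hölder in $x$ and Cauchy--Schwarz in $y$,
\[
\|(V\ast\rho_{FG^*})H\|_{L^2_{x,y}}\le\|V\ast|\rho_{FG^*}|\|_{L^{q}}\|H\|_{L^{r}_xL^2_y},\qquad \|\rho_{FG^*}\|_{L^p}\le\|F\|_{L^{p_1}_xL^2_y}\|G\|_{L^{p_2}_xL^2_y},
\]
with $1/q+1/r=1/2$ and $1/p=1/p_1+1/p_2$. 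We choose exponents by cases.

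\emph{Case 1: all derivatives fall on $H$.} Then $F=f$, $G=g$, and we use the $L^\infty$ bound from the first part. By \eqref{est:direct-0} together with $\|f\|_{L^\infty_xL^2_y}\lesssim\|f\|_{H^\sigma_xL^2_y}$, the term is bounded by $\|f\|_{H^\sigma}\|g\|_{H^\sigma}\|\partial^\alpha h\|_{L^2}$.

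\emph{Case 2: derivatives fall on $F$ or $G$.} We need $1/q+1/r=1/2$, which together with the HLS relation means
\[
\frac1{p_1}+\frac1{p_2}+\frac1r=\frac12+\frac1d .
\]
The Sobolev embeddings allow $1/p_1\in[\max(0,1/2-(\sigma-|\beta_1|)/d),\,1/2]$, and similarly for $p_2$ and $r$. The upper endpoints give a sum of $3/2$. The lower endpoints give a sum of at most $3/2-(3\sigma-|\alpha|)/d\le 3/2-2\sigma/d<1/2$, because $\sigma>d/2$. So the target value $1/2+1/d$ lies in the admissible range, and we can choose exponents accordingly. We must also keep $1<p<d$ and $q<\infty$ for HLS. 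These constraints can be met by perturbing the choice, or we can fall back on the splitting $V=V\mathbf 1_{|x|<1}+V\mathbf 1_{|x|\ge1}\in L^{s}+L^\infty$ for $s<d$ and use Young's inequality instead, which is more flexible at the endpoints.

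The main obstacle is this exponent bookkeeping in low dimension ($d=2,3$, where $\sigma=2$). There, a full $\sigma$ derivatives on $F$ forces $F\in L^2$ only, and the HLS endpoints $p=1$ or $q=\infty$ must be avoided. If that happens, I would put $\rho_{FG^*}$ into $L^1\cap L^{p}$ with $G\in L^\infty_xL^2_y$, and use the $L^1+L^\infty$ splitting of $V$ as in Lemma~\ref{lem:potential}. That gives $\|V\ast|\rho|\|_{L^\infty}\lesssim\|\rho\|_{L^1}+\|\rho\|_{L^{p}}$ for some $p>d$ is unavailable, so instead bound $\|V\ast|\rho_{FG^*}|\|_{L^{q}}$ with finite $q>d/(d-1)$ via Young on each piece, then place $H\in L^{r}_xL^2_y$ with $r<\infty$ via Sobolev.
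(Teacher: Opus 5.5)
Your proof of \eqref{est:direct-0} is correct and is the paper's argument.

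The $H^\sigma$ bound, however, has a gap: Case~2 rests on a false inequality. For $V=|x|^{-1}$, the Hardy--Littlewood--Sobolev relation is not $\frac1q=\frac1p-\frac1d$; that is the relation for the Riesz potential $|x|^{1-d}$.

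\begin{itemize}
\item \emph{The correct exponents.} Since $V*\bigl(f(\mu\,\cdot)\bigr)=\mu^{1-d}(V*f)(\mu\,\cdot)$, scaling forces $\frac1p-\frac1q=1-\frac1d$. HLS then requires $1<p<\frac{d}{d-1}$, so $\rho_{FG^*}$ must sit near $L^1$, not near $L^d$. Your target identity should read $\frac1{p_1}+\frac1{p_2}+\frac1r=\frac32-\frac1d$, with side condition $1-\frac1d<\frac1{p_1}+\frac1{p_2}<1$. The version you wrote, $\frac12+\frac1d$ with $1<p<d$, is wrong, so Case~2 as written does not go through.

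\item \emph{The lower-endpoint estimate is reversed.} Because $\max(0,x)\ge x$, the lower endpoints sum to \emph{at least} $\frac32-\frac{3\sigma-|\alpha|}{d}$, not at most. The correct observation is different: $b\mapsto\max\bigl(0,\frac12-\frac{\sigma-b}{d}\bigr)$ is convex, equals $0$ at $b=0$ and $\frac12$ at $b=\sigma$. Hence the lower endpoints sum to at most $\frac12$ whenever $|\beta_1|+|\beta_2|+|\alpha-\beta|\le\sigma$.

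\item \emph{The last paragraph.} It does not carry out the fallback and contains an unfinished sentence. Its worry is also misplaced: $\|V\mathbf{1}_{\{|x|<1\}}*\rho\|_{L^\infty}\le\|V\mathbf{1}_{\{|x|<1\}}\|_{L^{p'}}\|\rho\|_{L^p}$ only needs $p>\frac{d}{d-1}$, not $p>d$.
\end{itemize}

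With the corrected relation the HLS route can be repaired. The simpler fix is the splitting you mention, taken as $V_{<1}\in L^1$ and $V_{\ge1}\in L^\infty$; no HLS is then needed. In your term-by-term setting:
\begin{itemize}
\item $\|(V_{\ge1}*\rho_{FG^*})H\|_{L^2_{x,y}}\le\|\rho_{FG^*}\|_{L^1}\|H\|_{L^2_{x,y}}\le\|F\|_{L^2_{x,y}}\|G\|_{L^2_{x,y}}\|H\|_{L^2_{x,y}}$.
\item $\|(V_{<1}*\rho_{FG^*})H\|_{L^2_{x,y}}\lesssim\|F\|_{L^{p_1}_xL^2_y}\|G\|_{L^{p_2}_xL^2_y}\|H\|_{L^{p_3}_xL^2_y}$ with $\sum_j\frac1{p_j}=\frac12$. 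The Sobolev embeddings permit such exponents because the lower endpoints sum to at most $\frac12$.
\end{itemize}

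The paper packages the same idea more compactly. It bounds $\rho_{fg^*}$ in $H^\sigma_x\cap W^{\sigma,1}_x$ via Leibniz and Cauchy--Schwarz. Young's inequality then gives $V_{<1}*\rho\in H^\sigma_x$ and $V_{\ge1}*\rho\in W^{\sigma,\infty}_x$. Finally it multiplies by $h$ using the algebra property of $H^\sigma$.
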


\begin{proof}
    Set $F(x)=\|f(x,\cdot)\|_{L^2_y}$ and
    $G(x)=\|g(x,\cdot)\|_{L^2_y}$. Since $|\rho_{fg^*}(x)|\le F(x)G(x)$,
    Lemma~\ref{lem:potential} and Cauchy--Schwarz imply \eqref{est:direct-0}.

    Write $\rho=\rho_{fg^*}$. Since $\sigma>d/2$, we use the algebra property
    of $H^\sigma$, Leibniz's rule, and Cauchy--Schwarz to estimate
    \[
        \|\rho\|_{H^\sigma_x}+\|\rho\|_{W^{\sigma,1}_x}
        \lesssim \|f\|_{H^\sigma_xL^2_y}\|g\|_{H^\sigma_xL^2_y}.
    \]
    Noting that $V_{<1} \in L^1$ and $V_{\ge 1} \in L^{\infty}$, Young's inequality implies
    \[
    \begin{aligned}
        \|V_{<1}*\rho\|_{H^\sigma_x}
        &\le \|V_{<1}\|_{L^1}\|\rho\|_{H^\sigma_x}
        \lesssim \|f\|_{H^\sigma_xL^2_y}\|g\|_{H^\sigma_xL^2_y},\\
        \|V_{\ge1}*\rho\|_{W^{\sigma,\infty}_x}
        &\le \|V_{\ge1}\|_{L^\infty}\|\rho\|_{W^{\sigma,1}_x}
        \lesssim \|f\|_{H^\sigma_xL^2_y}\|g\|_{H^\sigma_xL^2_y}.
    \end{aligned}
    \]
    Combining these bounds with the algebra property of $H^\sigma$
    and Leibniz's rule, we obtain \eqref{est:direct-Hsigma}.
    This completes the proof of Lemma~\ref{lem:direct}.
\end{proof}

	Next, define the trilinear exchange term by
	\[
	\bigl(\mathsf{X}_{fg^{\ast}}h\bigr)(x,y) := \int_{\mathbb R^d}\int_{\mathbb R^d} V(x-z) f(x,a) \overline{g(z,a)} h(z,y) \,\dd a\,\dd z.
	\]
	We use the following estimate.
	\begin{lemma}[Exchange term]\label{lem:exchange-term}
		Let $d \ge 2$.
		If $f \in L^2_{x,y}$ and $g,h \in (L^2_x \cap L^{\infty}_x) L^2_y$, then
		\begin{equation}\label{est:exchange-L2}
			\|\mathsf{X}_{fg^{\ast}}h\|_{L^2_{x,y}} \lesssim \|f\|_{L^2_xL^2_y} \|g\|_{L^2_xL^2_y}^{(d-1)/d} \|h\|_{L^2_xL^2_y}^{(d-1)/d} \|g\|_{L^\infty_xL^2_y}^{1/d} \|h\|_{L^\infty_xL^2_y}^{1/d}.
		\end{equation}
		Let $\sigma = \lfloor d/2 \rfloor + 1$. If, in addition, $f,g,h\in H^\sigma_xL^2_y$, then
		\begin{equation}\label{est:exchange-Hsigma}
			\|\mathsf{X}_{fg^{\ast}}h\|_{H^\sigma_xL^2_y} \lesssim \|f\|_{H^\sigma_xL^2_y} \|g\|_{H^\sigma_xL^2_y} \|h\|_{H^\sigma_xL^2_y}.
		\end{equation}
	\end{lemma}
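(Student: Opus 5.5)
The plan is to reduce the exchange term to a scalar Hartree-type quantity using the operator-valued structure from Section~\ref{sec:halfdensity}. Write $u_f(x)=f(x,\cdot)$, and similarly $u_g$, $u_h$, all viewed as $L^2_y$-valued functions. Then
\[
    (\mathsf X_{fg^*}h)(x,\cdot)=\int_{\mathbb R^d} V(x-z)\, u_h(z)\,\langle u_g(z),u_f(x)\rangle_{L^2_y}\,\dd z .
\]
Swapping the roles of $a$ and $y$ is harmless here, since the inner product pairs $\overline{g(z,a)}$ with $f(x,a)$ while $h(z,y)$ carries the free variable. Taking the $L^2_y$ norm inside the integral (Minkowski) and applying Cauchy--Schwarz to the inner product gives the pointwise bound
\[
    \|(\mathsf X_{fg^*}h)(x,\cdot)\|_{L^2_y}\le F(x)\,\bigl(V*(GH)\bigr)(x),
\]
where $F=\|f\|_{L^2_y}$, $G=\|g\|_{L^2_y}$ and $H=\|h\|_{L^2_y}$.

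For \eqref{est:exchange-L2}, I would take the $L^2_x$ norm of this bound and estimate
\[
    \|F\|_{L^2}\,\|V*(GH)\|_{L^\infty} .
\]
Lemma~\ref{lem:potential} applied to $GH$ gives
\[
    \|V*(GH)\|_{L^\infty}\lesssim \|GH\|_{L^1}^{1-1/d}\,\|GH\|_{L^\infty}^{1/d}.
\]
Then $\|GH\|_{L^1}\le \|G\|_{L^2}\|H\|_{L^2}$ by Cauchy--Schwarz, and $\|GH\|_{L^\infty}\le \|G\|_{L^\infty}\|H\|_{L^\infty}$. Together these yield exactly the stated exponents.

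For \eqref{est:exchange-Hsigma}, the pointwise norm bound no longer suffices, because derivatives must be distributed. I would differentiate the kernel formula instead. Write $\mathsf X_{fg^*}h(x,y)=\int B(x,z)\,h(z,y)\,\dd z$ with
\[
    B(x,z)=V(x-z)\,\langle u_g(z),u_f(x)\rangle .
\]
This kernel is awkward to differentiate directly, so I would change variables $z=x-w$:
\[
    (\mathsf X_{fg^*}h)(x,y)=\int V(w)\,\langle u_g(x-w),u_f(x)\rangle\, h(x-w,y)\,\dd w .
\]
Now $\partial_x^\alpha$ with $|\alpha|\le\sigma$ falls on the three factors $f(x)$, $g(x-w)$ and $h(x-w)$ via Leibniz's rule, and $V$ is untouched. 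Each resulting term has the form $\mathsf X_{(\partial^{\alpha_1}f)(\partial^{\alpha_2}g)^*}(\partial^{\alpha_3}h)$ with $|\alpha_1|+|\alpha_2|+|\alpha_3|\le\sigma$. I then apply the pointwise bound above to each term, split $V=V_{<1}+V_{\ge1}$, and proceed as in Lemma~\ref{lem:direct}.

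Two cases arise. If the factor carrying the most derivatives is $f$, I put $F_1=\|\partial^{\alpha_1}f\|_{L^2_y}$ in $L^2_x$. I then need $V*(G_2H_3)\in L^\infty$. The $V_{\ge1}$ part is bounded by $\|G_2H_3\|_{L^1}\le\|g\|_{H^\sigma}\|h\|_{H^\sigma}$. The $V_{<1}$ part is bounded by $\|G_2H_3\|_{L^\infty}$, using Sobolev embedding $H^{\sigma-|\alpha_j|}\subset L^\infty$ on the less-differentiated factor and, when both carry derivatives, Gagliardo--Nirenberg, or alternatively Young with $V_{<1}\in L^{q}$, $q<d$. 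If instead the top derivatives fall on $g$ or $h$ (say $h$), I put $F_1\in L^\infty_x$, which is possible since $f$ then carries few derivatives. I also use Young's inequality $\|V_{<1}*(G_2H_3)\|_{L^2}\le\|V_{<1}\|_{L^1}\|G_2\|_{L^\infty}\|H_3\|_{L^2}$, or its $L^r$–$L^p$ variants, together with $\|V_{\ge1}*(G_2H_3)\|_{L^\infty}\le\|G_2H_3\|_{L^1}$.

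The main obstacle is this Hölder/Sobolev bookkeeping. The cases where derivatives are spread over two or three factors with $\sigma=\lfloor d/2\rfloor+1$ are borderline, and some factor must be placed in $L^p$ with $2<p<\infty$ via $H^{\sigma-|\alpha|}\subset L^p$. I would organize this exactly like the $H^\sigma$ algebra/Leibniz argument implicit in Lemma~\ref{lem:direct}, choosing exponents so that $V_{<1}\in L^1$ and $V_{\ge1}\in L^\infty$ absorb the convolution in every case.
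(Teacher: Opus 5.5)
Your argument for \eqref{est:exchange-L2} is the paper's. For \eqref{est:exchange-Hsigma} your route is valid but different from the paper's. The paper never expands into three factors. It sets $Q(x,a,y)=\overline{g(x,a)}h(x,y)$ and uses the $H^\sigma$ algebra property once to get $Q\in H^\sigma_xL^2_{a,y}\cap W^{\sigma,1}_xL^2_{a,y}$. Young's inequality then gives $V_{<1}*Q\in H^\sigma_xL^2_{a,y}$ and $V_{\ge1}*Q\in W^{\sigma,\infty}_xL^2_{a,y}$. Finally it writes $\mathsf X_{fg^*}h(x,y)=\int f(x,a)(V*_xQ)(x,a,y)\,\dd a$ and applies the algebra property and Leibniz's rule a second time. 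So it needs only bilinear product estimates and the fact that convolution commutes with derivatives, with no exponent counting.

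Your change of variables $z=x-w$ followed by a trilinear Leibniz expansion is instead the mechanism of Lemma~\ref{lem:trilinear-cutoff}. It costs a three-factor H\"older count. In exchange it gives tame bounds, with only one factor in $\dot H^\sigma$ and the others in $L^\infty_xL^2_y$ or $L^2_{x,y}$. That is the form the paper later needs for the time-weighted energy estimates.

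Two of your case claims are false as stated, though the repair is the one you announce at the end. Since $\sigma-d/2\in\{1/2,1\}$, the embedding $H^{\sigma-k}_x\subset L^\infty_x$ holds only for $k=0$. So ``put $F_1\in L^\infty_x$ because $f$ carries few derivatives'' fails for $d=4$, $\sigma=3$, $(|\alpha_1|,|\alpha_2|,|\alpha_3|)=(1,0,2)$, where $\partial^{\alpha_1}f$ lies only in $H^2_x(\mathbb R^4)L^2_y$. Likewise, Gagliardo--Nirenberg cannot place $G_2H_3$ in $L^\infty$ when both factors carry derivatives; your alternative with $V_{<1}\in L^q$, $q<d$, does work there.

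A cleaner bookkeeping avoids cases altogether:
\begin{itemize}
\item The $V_{\ge1}$ contribution is bounded by $\|F_1\|_{L^2}\|G_2\|_{L^2}\|H_3\|_{L^2}$ for every distribution of derivatives.
\item For the $V_{<1}$ contribution, reduce to $\alpha=0$ and $|\alpha|=\sigma$ via $\|u\|_{H^\sigma}\sim\|u\|_{L^2}+\|u\|_{\dot H^\sigma}$. The case $\alpha=0$ is \eqref{est:exchange-L2} together with $H^\sigma_x\subset L^\infty_x$.
\item For $|\alpha|=\sigma$, use H\"older and Young with $\|V_{<1}\|_{L^1}$ and $p_j=2\sigma/|\alpha_j|$, taking $p_j=\infty$ if $\alpha_j=0$. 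Then $\sum_jp_j^{-1}=1/2$, and the $L^2_y$-valued Gagliardo--Nirenberg inequality with $H^\sigma_x\subset L^\infty_x$ bounds each factor by its $H^\sigma_xL^2_y$ norm.
\end{itemize}
The differentiation under the integral should be done for smooth, rapidly decaying kernels and extended by density.
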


\begin{proof}
    Set $F(x)=\|f(x,\cdot)\|_{L^2_y}$,
    $G(x)=\|g(x,\cdot)\|_{L^2_y}$, and
    $H(x)=\|h(x,\cdot)\|_{L^2_y}$.
    By Cauchy--Schwarz in $a$ and Minkowski's inequality,
    \[
        \|(\mathsf X_{fg^*}h)(x,\cdot)\|_{L^2_y}
        \le F(x)(V*(GH))(x).
    \]
    Thus
    \[
        \|\mathsf X_{fg^*}h\|_{L^2_{x,y}}
        \le \|F\|_{L^2_x}\|V*(GH)\|_{L^\infty_x}.
    \]
    Lemma~\ref{lem:potential} and Cauchy--Schwarz give \eqref{est:exchange-L2}.

    To prove \eqref{est:exchange-Hsigma}, set
    $Q(x,a,y)=\overline{g(x,a)}h(x,y)$.
    As in the proof of Lemma~\ref{lem:direct},
    we use the algebra property of
    $H^\sigma$, Leibniz's rule, and Cauchy--Schwarz to obtain
    \[
        \|Q\|_{H^\sigma_xL^2_{a,y}}+\|Q\|_{W^{\sigma,1}_xL^2_{a,y}}
        \lesssim \|g\|_{H^\sigma_xL^2_a}\|h\|_{H^\sigma_xL^2_y}.
    \]
    By Young's inequality, 
    \[
    \begin{aligned}
        \|V_{<1}*Q\|_{H^\sigma_xL^2_{a,y}}
        &\le \|V_{<1}\|_{L^1}\|Q\|_{H^\sigma_xL^2_{a,y}}
        \lesssim \|g\|_{H^\sigma_xL^2_a}\|h\|_{H^\sigma_xL^2_y},\\
        \|V_{\ge1}*Q\|_{W^{\sigma,\infty}_xL^2_{a,y}}
        &\le \|V_{\ge1}\|_{L^\infty}\|Q\|_{W^{\sigma,1}_xL^2_{a,y}}
        \lesssim \|g\|_{H^\sigma_xL^2_a}\|h\|_{H^\sigma_xL^2_y}.
    \end{aligned}
    \]
    Since
    \[
        (\mathsf X_{fg^*}h)(x,y)
        =\int_{\RR^d}f(x,a)(V*_xQ)(x,a,y)\,\dd a,
    \]
    we obtain \eqref{est:exchange-Hsigma} by Cauchy--Schwarz in $a$, the algebra property of $H^{\sigma}$ and Leibniz's rule.
    This completes the proof of Lemma~\ref{lem:exchange-term}.
\end{proof}

For $K\in L^1(\RR^d)+L^\infty(\RR^d)$, define
\[
\begin{aligned}
    \bigl(\mathsf X^K_{fg^*}h\bigr)(x,y)
    &:=\int_{\RR^d}\int_{\RR^d}K(x-z)f(x,a)
    \overline{g(z,a)}h(z,y)\,\dd a\,\dd z,\\
    \mathcal T_K(f,g,h)
    &:=(K*\rho_{fg^*})h-\mathsf X^K_{fg^*}h.
\end{aligned}
\]
We write $\mathcal T=\mathcal T_V$, and $\mathsf X^V_{fg^*}h=\mathsf X_{fg^*}h$.

\begin{lemma}\label{lem:trilinear-cutoff}
Let $d\ge2$, $V(x)=|x|^{-1}$, and $\sigma=\lfloor d/2\rfloor+1$.
For every $u\in H_x^\sigma L_y^2$ and $R>0$, we have
\begin{equation}\label{est:trilinear-cutoff-L2}
    \|\mathcal T(u,u,u)\|_{L^2_{x,y}}
    \lesssim
    \left(R^{d-1}\|u\|_{L_x^\infty L_y^2}^2
    +R^{-1}\|u\|_{L^2_{x,y}}^2\right)\|u\|_{L^2_{x,y}},
\end{equation}
and
\begin{equation}\label{est:homogeneous-nonlinear-energy}
    \|\mathcal T(u,u,u)\|_{\dot H_x^\sigma L_y^2}
    \lesssim
    \left(R^{d-1}\|u\|_{L_x^\infty L_y^2}^2
    +R^{-1}\|u\|_{L^2_{x,y}}^2\right)
    \|u\|_{\dot H_x^\sigma L_y^2}.
\end{equation}
\end{lemma}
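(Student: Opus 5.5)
The plan is to split $V=V_{<R}+V_{\ge R}$ as in the proof of Lemma~\ref{lem:potential}, with $V_{<R}=V\mathbf 1_{\{|x|<R\}}$ and $V_{\ge R}=V\mathbf 1_{\{|x|\ge R\}}$. By linearity in the kernel,
\[
    \mathcal T(u,u,u)=\mathcal T_{V_{<R}}(u,u,u)+\mathcal T_{V_{\ge R}}(u,u,u).
\]
Each piece is estimated separately, using $\|V_{<R}\|_{L^1}\lesssim R^{d-1}$ for the first and $\|V_{\ge R}\|_{L^\infty}=R^{-1}$ for the second. Write $U(x)=\|u(x,\cdot)\|_{L^2_y}$.

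\textbf{The $L^2$ bound \eqref{est:trilinear-cutoff-L2}.} I will repeat the pointwise bounds from Lemmas~\ref{lem:direct} and~\ref{lem:exchange-term} with a general kernel $K$. For the direct part, $|\rho_{uu^*}|=U^2$. For the exchange part, Cauchy--Schwarz in $a$ and Minkowski give
\[
    \|(\mathsf X^K_{uu^*}u)(x,\cdot)\|_{L^2_y}\le U(x)\,(|K|*U^2)(x).
\]
Hence $\|\mathcal T_K(u,u,u)(x,\cdot)\|_{L^2_y}\le 2U(x)(|K|*U^2)(x)$, so
\[
    \|\mathcal T_K(u,u,u)\|_{L^2_{x,y}}\le 2\|U\|_{L^2}\,\||K|*U^2\|_{L^\infty}.
\]
For $K=V_{<R}$, Young's inequality gives $\||K|*U^2\|_{L^\infty}\le \|V_{<R}\|_{L^1}\|U\|_{L^\infty}^2\lesssim R^{d-1}\|u\|_{L^\infty_xL^2_y}^2$. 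For $K=V_{\ge R}$, it gives $\||K|*U^2\|_{L^\infty}\le R^{-1}\|U\|_{L^2}^2$. Adding the two yields \eqref{est:trilinear-cutoff-L2}.

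\textbf{The homogeneous bound \eqref{est:homogeneous-nonlinear-energy}.} The key point is that all $\sigma$ derivatives must land on a single factor, not on the two "potential" factors. If derivatives fell on $U^2$ we would lose the $L^\infty$ structure, so I plan to exploit translation invariance in $x$ and avoid Leibniz's rule altogether. Since $\partial_x$ commutes with convolution, Leibniz would spread derivatives over all three slots. Instead I use the finite-difference (translation) characterization of $\dot H^\sigma$, or equivalently the Fourier/Plancherel identity
\[
    \|F\|_{\dot H^\sigma_xL^2_y}^2\simeq\int |h|^{-d-2\sigma'}\|\Delta_h^{N}F\|_{L^2}^2\,\dd h,
\]
with iterated differences. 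That route is messy, so the concrete route I commit to is the following. Both pieces are nonlocal operators of the form $u\mapsto \mathcal T_K(u,u,u)$. I would view the equation abstractly, writing $\mathcal T_K(u,u,u)(x)=\int K(x-z)\,\mathcal M(u(x),u(z),u(z))\,\dd z$ with a trilinear form $\mathcal M$ on $L^2_y$, and apply $\partial^\beta_x$ with $|\beta|=\sigma$. By the Leibniz rule applied to $u(x)$ and to $u(z)u(z)$ after the change of variables $z=x-w$, we get a sum of terms with $\partial^{\beta_1}u(x)$ and $\partial^{\beta_2}u(x-w)$, $\partial^{\beta_3}u(x-w)$, integrated against $K(w)$. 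When all derivatives fall on one factor, the previous pointwise argument gives exactly the claimed bound.

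\textbf{Main obstacle: the mixed terms.} The mixed terms, with derivatives split across factors, are the hardest part. My first attempt would be to control them by Gagliardo--Nirenberg interpolation: $\|\partial^{j}u\|_{L^{p_j}}\lesssim \|u\|_{L^\infty}^{1-j/\sigma}\|u\|_{\dot H^\sigma}^{j/\sigma}$, with $1/p_j=j/(2\sigma)$, in the vector-valued version. This works since $L^2_y$ is a Hilbert space. After applying Hölder and Young in $w$ with $\|V_{<R}\|_{L^1}$, the mixed terms for $V_{<R}$ are bounded by $R^{d-1}\|u\|_{L^\infty}^2\|u\|_{\dot H^\sigma}$. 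For $V_{\ge R}$ the $L^\infty$ factors must be replaced by $L^2$-type ones. Here I would interpolate between $\|u\|_{L^2}$ and $\|u\|_{\dot H^\sigma}$ on the two inner factors, $\|\partial^j u\|_{L^{q}}$ with $L^{q}\cdot L^{q'}\subset L^1$, paired against $\|V_{\ge R}\|_{L^\infty}$ in $w$. The exponent bookkeeping should produce $R^{-1}\|u\|_{L^2}^2\|u\|_{\dot H^\sigma}$ by scaling, since each side is homogeneous of the same degree in $u$ and under dilations. I expect this second case to be the delicate step. If the exponents fail, the fallback is to pass to Fourier variables and exploit $\widehat{V_{\ge R}}$ bounds.
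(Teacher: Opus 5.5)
Your splitting $V=V_{<R}+V_{\ge R}$ and your proof of \eqref{est:trilinear-cutoff-L2} coincide with the paper's. For \eqref{est:homogeneous-nonlinear-energy}, the paper also expands $\partial_x^\beta\mathcal T_K(u,u,u)=\sum C_{\beta_1,\beta_2,\beta_3}\mathcal T_K(\partial_x^{\beta_1}u,\partial_x^{\beta_2}u,\partial_x^{\beta_3}u)$ by Leibniz's rule. It then treats the $V_{<R}$ terms exactly as you do, via Young/H\"older with $p_j=2\sigma/|\beta_j|$ and the $L^2_y$-valued Gagliardo--Nirenberg inequality. So mixed terms are no obstruction. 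Your opening concern that all $\sigma$ derivatives must land on a single factor is unfounded, and the finite-difference detour is unnecessary.

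The step you have not actually proved is the $V_{\ge R}$ part. You only say the exponent bookkeeping ``should'' work by scaling and keep a Fourier fallback in reserve. Matching homogeneity does not by itself give an inequality. This step is in fact the easiest one. For $K\in L^\infty$, the pointwise bounds you already wrote give
\[
\|\mathcal T_K(f,g,h)\|_{L^2_{x,y}}\le 2\|K\|_{L^\infty}\|f\|_{L^2_{x,y}}\|g\|_{L^2_{x,y}}\|h\|_{L^2_{x,y}}.
\]
In your notation, take $q=q'=2$ and also place the outer factor in $L^2$. Then use $\|\partial_x^{\beta_j}u\|_{L^2_{x,y}}\lesssim\|u\|_{L^2_{x,y}}^{1-|\beta_j|/\sigma}\|u\|_{\dot H^\sigma_xL^2_y}^{|\beta_j|/\sigma}$, which follows from Plancherel and H\"older in frequency. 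Together with $\sum_j|\beta_j|=\sigma$, this bounds every term of the expansion, mixed or not, by $R^{-1}\|u\|_{L^2_{x,y}}^2\|u\|_{\dot H^\sigma_xL^2_y}$. With this filled in, your argument is the paper's proof, provided the Leibniz expansion is done for smooth $u$ and extended to $H^\sigma_xL^2_y$ by density, as the paper does.
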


\begin{proof}
The pointwise bounds in the proofs of Lemmas~\ref{lem:direct} and
\ref{lem:exchange-term} hold with $|K|$ in place of $V$.
By Young's and H\"older's inequalities,
\[
    \|\mathcal T_K(f,g,h)\|_{L^2_{x,y}}
    \lesssim\|K\|_{L^1}
    \|f\|_{L_x^{p_1}L_y^2}\|g\|_{L_x^{p_2}L_y^2}
    \|h\|_{L_x^{p_3}L_y^2}
\]
for $K\in L^1$ and $2\le p_j\le\infty$ with
$\sum_{j=1}^3p_j^{-1}=1/2$. For $K\in L^\infty$, the same bounds give
\[
    \|\mathcal T_K(f,g,h)\|_{L^2_{x,y}}
    \lesssim\|K\|_{L^\infty}
    \|f\|_{L^2_{x,y}}\|g\|_{L^2_{x,y}}\|h\|_{L^2_{x,y}}.
\]
As in Lemma~\ref{lem:potential}, split $V=V_{<R}+V_{\ge R}$. Since
\[
    \|V_{<R}\|_{L^1}\lesssim R^{d-1},\qquad
    \|V_{\ge R}\|_{L^\infty}=R^{-1},
\]
applying the bound for $K\in L^1$ to $V_{<R}$ with
$(p_1,p_2,p_3)=(\infty,\infty,2)$ and the bound for $K\in L^\infty$
to $V_{\ge R}$ proves
\eqref{est:trilinear-cutoff-L2}.

For smooth $u$, we expand by Leibniz's rule:
\[
    \partial_x^\beta\mathcal T_K(u,u,u)
    =\sum_{\beta_1+\beta_2+\beta_3=\beta}
    C_{\beta_1,\beta_2,\beta_3}
    \mathcal T_K(\partial_x^{\beta_1}u,
    \partial_x^{\beta_2}u,\partial_x^{\beta_3}u).
\]
Fix $|\beta|=\sigma$ and take $p_j=2\sigma/|\beta_j|$, with
$p_j=\infty$ when $\beta_j=0$.
The $L_y^2$-valued Gagliardo--Nirenberg inequality and interpolation
between $L^2_{x,y}$ and $\dot H_x^\sigma L_y^2$ give
\[
\begin{aligned}
    \prod_{j=1}^3\|\partial_x^{\beta_j}u\|_{L_x^{p_j}L_y^2}
    \lesssim
    \|u\|_{L_x^\infty L_y^2}^2
    \|u\|_{\dot H_x^\sigma L_y^2},\qquad 
    \prod_{j=1}^3\|\partial_x^{\beta_j}u\|_{L^2_{x,y}}
    \lesssim
    \|u\|_{L^2_{x,y}}^2\|u\|_{\dot H_x^\sigma L_y^2}.
\end{aligned}
\]
For each term in the Leibniz expansion of $\partial_x^{\beta} \mathcal T_{V_{<R}}(u,u,u)$,
use the estimate for $K\in L^1$ with the exponents $p_j$ chosen above.
For $\mathcal T_{V_{\ge R}}(u,u,u)$, use the estimate for $K\in L^\infty$
with all $\partial_x^{\beta_j} u$ in $L^2_{x,y}$.
Summing over $|\beta|=\sigma$ proves
\eqref{est:homogeneous-nonlinear-energy}.
The estimates extend to $H_x^\sigma L_y^2$ by density.
\end{proof}

	\subsection{Local well-posedness}
	We set
	\[
	\Sigma_x^{\sigma}L^2_y = \{ f \in H^{\sigma}_x L^2_y : \, |x|^\sigma f \in L^2_{x,y} \}, \qquad \| f \|_{\Sigma^{\sigma}_x L^2_y}:= \| f \|_{H^{\sigma}_x L^2_y} + \| |x|^{\sigma} f \|_{L^2_{x,y}}.
	\]

	\begin{lemma}\label{lem:LWP}
		Let $d\ge 2$ and $\sigma = \lfloor d/2 \rfloor +1$.
		Suppose that $\kappa_0 \in \Sigma^{\sigma}_x L^2_y$.
		Then the Cauchy problem \eqref{eqn:kappa} with $\kappa(0) = \kappa_0$ has a unique maximal solution $ \kappa \in C([0,T_*);\Sigma^{\sigma}_x L^2_y)$ for some $T_* \in (0,\infty]$.
		Moreover, $|\mathrm{J}_x|^{\sigma}\kappa \in C([0,T_*); L^2_{x,y})$.
		If $T_* < \infty$, then
		\begin{equation}\label{eq:lwp-continuation}
			\limsup_{t \rightarrow T_*} \| \kappa(t) \|_{H^\sigma_x L^2_y} = \infty.
		\end{equation}
	\end{lemma}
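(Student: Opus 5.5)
We would argue by a standard contraction in the Duhamel formulation
\[
\kappa(t)=\ee^{\ii t\Delta_x/2}\kappa_0-\ii\int_0^t\ee^{\ii(t-s)\Delta_x/2}\mathcal T(\kappa(s),\kappa(s),\kappa(s))\,\dd s,
\]
working in $C([0,T];\Sigma^\sigma_xL^2_y)$. The free group $\ee^{\ii t\Delta_x/2}$ acts only in $x$ and is unitary on $H^\sigma_xL^2_y$. For the weight we do not use $|x|^\sigma$ directly. Instead we use the vector fields: $|x|^\sigma\ee^{\ii t\Delta_x/2}f=\ee^{\ii t\Delta_x/2}|\mathrm J_x(-t)|^\sigma f$, and $\|\,|\mathrm J_x(t)|^\sigma g\|_{L^2}\lesssim \||x|^\sigma g\|_{L^2}+|t|^\sigma\|g\|_{\dot H^\sigma_x L^2_y}$, since $\mathrm J_x(t)=x+\ii t\nabla_x$. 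Hence the free flow maps $\Sigma^\sigma_xL^2_y$ to itself, with norm bounded by $C\langle T\rangle^\sigma$ on $[0,T]$.

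For the nonlinearity, Lemmas~\ref{lem:direct} and~\ref{lem:exchange-term} give the trilinear bound $\|\mathcal T(f,g,h)\|_{H^\sigma_xL^2_y}\lesssim\prod\|\cdot\|_{H^\sigma_xL^2_y}$. For the weight we need
\[
\||x|^\sigma\mathcal T(f,g,h)\|_{L^2_{x,y}}\lesssim \||x|^\sigma h\|_{L^2}\,\|f\|_{H^\sigma}\|g\|_{H^\sigma}+\text{(similar)}.
\]
In the direct term the weight falls on $h$ and the potential is bounded in $L^\infty$ by Sobolev embedding ($\sigma>d/2$). In the exchange term $|x|^\sigma$ lands on $f(x,a)$, because $V*_xQ$ is bounded in $L^\infty_xL^2_{a,y}$ by the proof of Lemma~\ref{lem:exchange-term}. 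Both terms are therefore controlled, and the trilinear structure gives Lipschitz differences by multilinearity.

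A contraction on a ball of radius $2C\|\kappa_0\|_{\Sigma^\sigma}$ for small $T$ then yields existence and uniqueness. Continuity in time of the Duhamel term follows from strong continuity of the group together with the vector-field bound above. Setting $|\mathrm J_x(t)|^\sigma\kappa(t)=\ee^{\ii t\Delta_x/2}|x|^\sigma\lambda(t)$ with $\lambda=\ee^{-\ii t\Delta_x/2}\kappa\in C(\Sigma^\sigma)$ then gives $|\mathrm J_x|^\sigma\kappa\in C([0,T_*);L^2_{x,y})$.

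The main point is the blow-up criterion \eqref{eq:lwp-continuation}: the existence time must depend only on $\|\kappa_0\|_{H^\sigma_xL^2_y}$. We would handle this by running the contraction first in $C([0,T];H^\sigma_xL^2_y)$ alone, with $T\sim\|\kappa_0\|_{H^\sigma}^{-2}$. We then propagate the weight by a linear estimate: $|x|^\sigma\lambda$ satisfies a Duhamel equation that is linear in the weighted quantity, with coefficients bounded by $\|\kappa\|_{H^\sigma}^2$ times $\langle t\rangle^\sigma$. Gronwall then shows that the weighted norm stays finite whenever the $H^\sigma$ norm does. Consequently, if $T_*<\infty$ and $\limsup\|\kappa\|_{H^\sigma}<\infty$, we could restart the solution from a time close to $T_*$ and extend it past $T_*$, a contradiction. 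Uniqueness in the larger class follows from the $L^2$ Lipschitz estimate \eqref{est:exchange-L2} combined with Gronwall.
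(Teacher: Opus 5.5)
Your proposal is correct. The Duhamel contraction in $C([0,T];\Sigma^\sigma_xL^2_y)$, the free-flow bound via $x_j\ee^{\ii t\Delta_x/2}=\ee^{\ii t\Delta_x/2}(x_j-\ii t\partial_{x_j})$, the weighted trilinear estimate (weight on $h$ in the direct term, on $f$ in the exchange term), uniqueness by Gronwall, and the continuity of $|\mathrm J_x|^\sigma\kappa$ through $\lambda$ are exactly the paper's argument. The only real difference is the blow-up criterion.

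The paper does not need a lifespan that depends only on $\|\kappa_0\|_{H^\sigma_xL^2_y}$. Assuming $M=\sup_{[0,T_*)}\|\kappa(t)\|_{H^\sigma_xL^2_y}<\infty$, it applies Gronwall directly to $\|\kappa(t)\|_{\Sigma^\sigma_xL^2_y}$ on $[0,T_*)$. This uses the diagonal bound $\|\mathcal T(\kappa,\kappa,\kappa)\|_{\Sigma^\sigma_xL^2_y}\lesssim\|\kappa\|_{H^\sigma_xL^2_y}^2\|\kappa\|_{\Sigma^\sigma_xL^2_y}$ and the free-flow constant on $[0,T_*]$. The resulting uniform $\Sigma$ bound makes the existence time of the original $\Sigma$-contraction uniform near $T_*$, so the solution can be restarted.

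Your route, a separate $H^\sigma_xL^2_y$ contraction followed by linear propagation of the weight, gives the slightly stronger fact that the lifespan is controlled by the $H^\sigma$ norm alone. It costs two extra steps. First, you need uniqueness in $C_tH^\sigma_xL^2_y$ to identify the $H^\sigma$ solution with the $\Sigma$ solution. Second, you must show the weighted norm of the restarted $H^\sigma$ solution is finite before Gronwall can be applied; for example, run the $\Sigma$ local theory and continue it while the Gronwall bound holds. In the paper's version this finiteness is automatic, since Gronwall is only applied to a solution already known to lie in $C([0,T_*);\Sigma^\sigma_xL^2_y)$. So your claim that the existence time \emph{must} depend only on the $H^\sigma$ norm overstates what is needed.

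A small point on your uniqueness remark: use the $H^\sigma_xL^2_y$ Lipschitz bound for $\mathcal T$ with Gronwall. The $L^2$ estimate for the exchange term, as stated, requires $L^\infty_xL^2_y$ control of the second and third slots. When the difference $u-v$ sits there, it does not give a bound linear in $\|u-v\|_{L^2_{x,y}}$.
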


	\begin{proof}
		We first note that, for every fixed $T_0>0$,
		\begin{equation}
			\label{eq:free-Sigma-bound}
			\sup_{|t|\leq T_0} \|\ee^{\ii t\Delta_x/2}f\|_{\Sigma_x^\sigma L_y^2} \le C_{T_0}\|f\|_{\Sigma_x^\sigma L_y^2}.
		\end{equation}
		Indeed, the $H^\sigma_x L^2_y$ norm is preserved by the free flow.
		For the weighted part, we use the identity
		$
		x_j \ee^{\ii t\Delta_x/2} = \ee^{\ii t\Delta_x/2}(x_j-\ii t\partial_{x_j})
		$
		and the standard interpolation estimate
		$
		\|x^{\beta_1}\partial_x^{\beta_2} f\|_{L_{x,y}^2} \lesssim \|f\|_{\Sigma_x^\sigma L_y^2},
		$
		for $|\beta_1| + |\beta_2| \le \sigma$.
		Expanding the corresponding identity for $x^{\alpha} \ee^{\ii t \Delta_x/2}$, with $|\alpha|\le\sigma$, gives \eqref{eq:free-Sigma-bound}.
		By a density argument and \eqref{eq:free-Sigma-bound}, the free Schr\"odinger group is strongly continuous on $\Sigma^{\sigma}_x L^2_y$.

		Applying Lemmas \ref{lem:direct} and \ref{lem:exchange-term}, we get
		\begin{equation}
			\label{eq:T-Hsigma-local}
			\|\mathcal T(f,g,h)\|_{H_x^\sigma L_y^2} \lesssim \|f\|_{H_x^\sigma L_y^2} \|g\|_{H_x^\sigma L_y^2} \|h\|_{H_x^\sigma L_y^2}.
		\end{equation}
		For the weighted part, observe that
		\[
		|x|^\sigma \bigl((V \ast \rho_{fg^{\ast}})h \bigr) = (V \ast \rho_{fg^{\ast}})\bigl(|x|^\sigma h\bigr), \qquad |x|^\sigma \mathsf{X}_{fg^\ast}h = \mathsf{X}_{(|x|^\sigma f)g^\ast}h.
		\]
		Applying the $L^2$ estimates for the direct and exchange terms, we obtain 
		\begin{equation}\label{eq:T-xsigma-local}
			\bigl\||x|^\sigma\mathcal T(f,g,h)\bigr\|_{L_{x,y}^2} \lesssim \|f\|_{H_x^\sigma L_y^2} \|g\|_{H_x^\sigma L_y^2} \||x|^\sigma h\|_{L_{x,y}^2} + \||x|^\sigma f\|_{L_{x,y}^2} \|g\|_{H_x^\sigma L_y^2} \|h\|_{H_x^\sigma L_y^2}.
		\end{equation}
		Combining \eqref{eq:T-Hsigma-local} and \eqref{eq:T-xsigma-local}, we get
		\begin{equation}\label{eq:T-Sigma-local}
			\begin{aligned}
				\|\mathcal T(f,g,h)\|_{\Sigma_x^\sigma L_y^2} &\lesssim \|f\|_{H_x^\sigma L_y^2} \|g\|_{H_x^\sigma L_y^2} \|h\|_{\Sigma_x^\sigma L_y^2} + \|f\|_{\Sigma_x^\sigma L_y^2} \|g\|_{H_x^\sigma L_y^2} \|h\|_{H_x^\sigma L_y^2} \\
				&\lesssim \|f\|_{\Sigma_x^\sigma L_y^2} \|g\|_{\Sigma_x^\sigma L_y^2} \|h\|_{\Sigma_x^\sigma L_y^2}.
			\end{aligned}
		\end{equation}
		Taking $f=g=h=\kappa$ also yields
		\begin{equation}\label{eq:T-Sigma-diagonal}
			\|\mathcal T(\kappa,\kappa,\kappa)\|_{\Sigma_x^\sigma L_y^2} \lesssim \|\kappa\|_{H_x^\sigma L_y^2}^2 \|\kappa\|_{\Sigma_x^\sigma L_y^2}.
		\end{equation}

		Fix $T_0 >0$.
		For $0\le T \le T_0$, define
		\[
		X_T := C([0,T];\Sigma_x^\sigma L_y^2), \qquad \|\kappa\|_{X_T} := \sup_{0\le t\le T} \|\kappa(t)\|_{\Sigma_x^\sigma L_y^2}.
		\]
		Consider the map
		\begin{equation}
			\label{eq:lwp-map}
			\Phi[\kappa](t) := \ee^{\ii t \Delta_x/2}\kappa_0 -\ii\int_0^t \ee^{\ii (t-s) \Delta_x/2}\mathcal T(\kappa,\kappa,\kappa)(s)\,\dd s.
		\end{equation}
		By \eqref{eq:free-Sigma-bound} and \eqref{eq:T-Sigma-diagonal},
		\begin{equation}
			\label{eq:lwp-self-map}
			\|\Phi[\kappa]\|_{X_T} \le C_0 \|\kappa_0\|_{\Sigma_x^\sigma L_y^2} + C_1 T\|\kappa\|_{X_T}^3,
		\end{equation}
		where $C_0, \, C_1 >0$ depend only on $d$ and $T_0$.

		By \eqref{eq:T-Sigma-local}, 
		\begin{align}
			\|\Phi[\kappa^{(1)}]-\Phi[\kappa^{(2)}]\|_{X_T} \le C_1 T \Bigl( \|\kappa^{(1)}\|_{X_T}^2 + \|\kappa^{(1)}\|_{X_T} \|\kappa^{(2)}\|_{X_T} + \|\kappa^{(2)}\|_{X_T}^2 \Bigr) \|\kappa^{(1)} -\kappa^{(2)}\|_{X_T}.
			\label{eq:lwp-contraction}
		\end{align}

		Set $R:=2C_0\|\kappa_0\|_{\Sigma_x^\sigma L_y^2}$ and $B_R := \left\{ \kappa\in  X_T: \|\kappa\|_{X_T}\le R \right\}. $ Choose $0 < T \le T_0$ so that $3C_1 T R^2 \le 1/2$.
		By \eqref{eq:lwp-self-map}, for every $\kappa \in B_R$, we have
		\[
		\|\Phi[\kappa]\|_{X_T} \le \frac{R}{2}+C_1 TR^3 \le \frac{R}{2}+\frac{R}{6} \le R.
		\]
		Thus $\Phi$ maps $B_R$ into itself.
		Moreover, for $\kappa^{(1)},\kappa^{(2)} \in B_R$, \eqref{eq:lwp-contraction} shows that
		\[
		\|\Phi[\kappa^{(1)}]-\Phi[\kappa^{(2)}]\|_{X_T} \leq \frac12 \|\kappa^{(1)}-\kappa^{(2)}\|_{ X_T}.
		\]
		The contraction mapping principle gives a fixed point $\kappa\in B_R$, which solves \eqref{eqn:kappa} with $\kappa(0) = \kappa_0$.

		To obtain uniqueness in $X_T$, let $\kappa^{(1)}, \, \kappa^{(2)} \in X_T$ be two solutions with the same initial data.
		Applying Duhamel's formula and \eqref{eq:T-Sigma-local}, we get
		\[
		\begin{aligned}
			&\|\kappa^{(1)}(t)-\kappa^{(2)}(t)\|_{\Sigma_x^\sigma L_y^2} \le C_{T_0}\int_0^t \left( \|\kappa^{(1)}(s)\|_{\Sigma_x^\sigma L_y^2}^2 + \|\kappa^{(2)} (s)\|_{\Sigma_x^\sigma L_y^2}^2 \right) \|\kappa^{(1)}(s)-\kappa^{(2)}(s)\|_{\Sigma_x^\sigma L_y^2} \,\dd s.
		\end{aligned}
		\]
		By Gr\"onwall's inequality, $\kappa^{(1)}(t) =\kappa^{(2)}(t)$ for $t \in [0,T]$.
		Let $[0,T_*)$ denote the maximal interval of existence.
        For every $0<T<T_*$, \eqref{eq:free-Sigma-bound} and strong continuity of the free group give $\ee^{-\ii t \Delta_x/2} \kappa(t) \in C([0,T];\Sigma^{\sigma}_x L^2_y)$.
		By the definition of $|\mathrm{J}_x(t)|^\sigma$, we obtain $|\mathrm{J}_x|^\sigma\kappa \in C([0,T_*);L_{x,y}^2)$.

		It remains to prove \eqref{eq:lwp-continuation}.
		Suppose $T_* < \infty$ and \eqref{eq:lwp-continuation} fails. Since $\kappa \in C([0,T_*);H^{\sigma}_x L^2_y)$, we then have
		$
		M:= \sup_{t \in[0,T_*)} \| \kappa(t) \|_{H^{\sigma}_x L^2_y}<\infty
		$.
		Using Duhamel's formula, \eqref{eq:T-Sigma-diagonal}, and \eqref{eq:free-Sigma-bound}, we obtain
		\[
		\|\kappa(t)\|_{\Sigma_x^\sigma L_y^2} \le C_{T_*}\|\kappa_0\|_{\Sigma_x^\sigma L_y^2} +C_{T_*}M^2 \int_0^t\|\kappa(s)\|_{\Sigma_x^\sigma L_y^2}\,\dd s
		\]
		for $0 \le t <T_*$.
		By Gr\"onwall's inequality,
		\[
		\sup_{0\le t<T_*} \|\kappa(t)\|_{\Sigma_x^\sigma L_y^2} \le C_{T_*}\|\kappa_0\|_{\Sigma_x^\sigma L_y^2} \, e^{C_{T_*} M^2 T_*} <\infty.
		\]
		The local existence argument therefore extends $\kappa$ beyond $T_*$, contradicting maximality.
		This proves \eqref{eq:lwp-continuation}.
	\end{proof}

	\begin{remark}
		The above proof also shows that for every fixed $T_0>0$, there exist $\epsilon_0 = \epsilon_0 (T_0) >0$ and $C_{T_0} >0$ such that, whenever $\| \kappa_0 \|_{\Sigma^{\sigma}_x L^2_y} \le \epsilon_0$, the solution exists on $[0,T_0]$ and satisfies
		\[
		\sup_{0\leq t\leq T_0} \left( \|\kappa(t)\|_{H_x^\sigma L_y^2} + \||\mathrm{J}_x|^\sigma\kappa(t)\|_{L_{x,y}^2} \right) \le C_{T_0} \| \kappa_0 \|_{\Sigma^{\sigma}_x L^2_y}.
		\]
		In particular, if $\| \kappa_0 \|_{\Sigma^{\sigma}_x L^2_y} \le \epsilon_0(2)$, the solution exists on $[0,2]$  and satisfies these bounds.
	\end{remark}

    \begin{remark}\label{rmk:lambda-C1}
        For the profile $\lambda(t)=\ee^{-\ii t\Delta_x/2}\kappa(t)$, Duhamel's formula,
        \eqref{eq:T-Sigma-local}, and strong continuity of the free group imply that
        $\lambda\in C^1([0,T];\Sigma_x^\sigma L_y^2)$ for every $0<T<T_*$, with
        \[
            \ii \partial_t \lambda(t) =  \ee^{-\ii t \Delta_x/2} \mathcal{T}(\kappa,\kappa,\kappa)(t).
        \]
        Since $\sigma>d/2$, the Fourier transform and Sobolev embedding imply
        $\widehat\lambda\in C^1([0,T];H_k^\sigma L_y^2)$. In particular, $ \widehat\lambda \in C^1([0,T];X)$.
    \end{remark}

	\section{Energy Estimates}
	For sufficiently small initial data, we treat $0\leq t\le 1$ by the local bounds following Lemma~\ref{lem:LWP} and use the bootstrap argument on $1\le  t\le  T$.
	Fix $\delta = (\sigma - d/2)/24$. Let $T>1$ be such that the solution $\kappa$ given by Lemma \ref{lem:LWP} exists on $[0,T]$. We define the bootstrap norm
	\begin{equation}\label{def:bootstrapnorm}
		\mathcal B_T(\kappa) := \sup_{1\le t\le T} \left( \|\kappa(t)\|_{L^2_{x,y}} + t^{-\delta}\|| \mathrm J_x(t)|^\sigma \kappa(t)\|_{L^2_{x,y}} + t^{-\delta}\|\kappa(t)\|_{H^\sigma_x L^2_y} + t^{d/2}\|\kappa(t)\|_{L^\infty_xL^2_y} \right).
	\end{equation}

	\begin{proposition}\label{prop:bootstrap}
		Fix $\delta =(\sigma - d/2)/24$. Let $\kappa_0$ satisfy
		\[
		\|\kappa_0\|_{H_x^\sigma L_y^2} + \||x|^\sigma\kappa_0\|_{L_{x,y}^2} \leq\epsilon_0,
		\]
		where $\epsilon_0>0$ is sufficiently small.
		Let $T>1$, and let $\kappa$ be the solution to \eqref{eqn:kappa} on $[0,T]$ with $\kappa(0)=\kappa_0$, supplied by Lemma~\ref{lem:LWP}.
		Then
		\[
		\mathcal B_T(\kappa) \leq C_0\epsilon_0+C_1\mathcal B_T(\kappa)^3,
		\]
		where $C_0,C_1>0$ depend only on $d$.
	\end{proposition}
    We will prove this in Section~\ref{sec:decayestimates}.

	\begin{lemma}
		\label{lem:energy}
		Let $T>1$ and let $\kappa$ be a solution given by Lemma \ref{lem:LWP} on $[0,T]$ with $\kappa(0)=\kappa_0$.
		Then
		\[
		\|\kappa(t)\|_{L_{x,y}^2} = \|\kappa_0\|_{L_{x,y}^2}
		\]
		for every $0\leq t\leq T$.
		Moreover, for every multi-index $\beta$ with $|\beta|=\sigma$,
		\[
		\frac{\dd}{\dd t} \left( \|\partial_x^\beta\kappa(t)\|_{L_{x,y}^2} + \|\mathrm J_x^\beta\kappa(t)\|_{L_{x,y}^2} \right) \lesssim t^{-1+\delta}\mathcal B_T(\kappa)^3
		\]
		for every  $1\leq t\leq T$.
	\end{lemma}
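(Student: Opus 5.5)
Both claims come from differentiating the relevant quantity along the flow \eqref{eqn:kappa}. For mass conservation, we compute
\[
\frac{\dd}{\dd t}\|\kappa\|_{L^2_{x,y}}^2=2\,\mathrm{Im}\,\langle\kappa,H_\gamma\kappa\rangle_{L^2_{x,y}},
\]
and observe that $H_\gamma=-\frac12\Delta_x+V*\rho_\gamma-\mathsf X_\gamma$ acts only in $x$ and is symmetric on $L^2_x$: the direct potential is real, and the exchange kernel $V(x-z)\gamma(x,z)$ is Hermitian because $\gamma$ is self-adjoint. Fibrewise in $y$, the quantity $\langle\kappa,H_\gamma\kappa\rangle$ is therefore real. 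To make this rigorous at the regularity of Lemma~\ref{lem:LWP}, we work with the profile $\lambda$, which is $C^1$ in $\Sigma^\sigma_xL^2_y$ by Remark~\ref{rmk:lambda-C1}. We then write $\frac{\dd}{\dd t}\|\lambda\|^2=2\,\mathrm{Im}\langle\kappa,\mathcal T(\kappa,\kappa,\kappa)\rangle$ and verify directly from the kernel formula \eqref{eq:exchange-left} that this pairing is real. Via the $M_\kappa(x)$ representation, $\langle\kappa,\mathsf X_\gamma\kappa\rangle=\iint V(x-z)|\gamma(x,z)|^2\,\dd x\,\dd z$, which is real.

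For the $\partial_x^\beta$ estimate, differentiating the equation gives $\ii\partial_t\partial_x^\beta\kappa=-\frac12\Delta_x\partial^\beta_x\kappa+\partial_x^\beta\mathcal T(\kappa,\kappa,\kappa)$. The Laplacian contributes nothing to the time derivative of the norm, so
\[
\frac{\dd}{\dd t}\|\partial^\beta_x\kappa\|_{L^2_{x,y}}\le\|\mathcal T(\kappa,\kappa,\kappa)\|_{\dot H^\sigma_xL^2_y}.
\]
We bound the right-hand side by \eqref{est:homogeneous-nonlinear-energy} with $R=t$:
\[
\bigl(t^{d-1}\|\kappa\|_{L^\infty_xL^2_y}^2+t^{-1}\|\kappa\|_{L^2}^2\bigr)\|\kappa\|_{\dot H^\sigma_x L^2_y}\lesssim t^{-1}\mathcal B_T^2\cdot t^{\delta}\mathcal B_T,
\]
using $\|\kappa\|_{L^\infty_xL^2_y}\le t^{-d/2}\mathcal B_T$. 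The choice $R=t$ is what balances the two terms.

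For the vector field $\mathrm J^\beta_x$, we use that $\mathrm J_x^\beta(t)=\ee^{\ii t\Delta_x/2}x^\beta\ee^{-\ii t\Delta_x/2}$ commutes with $\ii\partial_t+\frac12\Delta_x$. This gives $\|\mathrm J^\beta_x\kappa\|=\|x^\beta\lambda\|$ and
\[
\frac{\dd}{\dd t}\|\mathrm J^\beta_x\kappa\|\le\|\mathrm J^\beta_x\mathcal T(\kappa,\kappa,\kappa)\|_{L^2_{x,y}}.
\]
The main obstacle is estimating this term, since $\mathrm J$ is not a derivation. We use the standard factorization $\mathrm J_x(t)=\ii t\,\ee^{\ii|x|^2/(2t)}\nabla_x\ee^{-\ii|x|^2/(2t)}$. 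Setting $M=\ee^{-\ii|x|^2/(2t)}$, the nonlinearity $\mathcal T$ is gauge covariant: $\mathcal T(Mu,Mu,Mu)=M\mathcal T(u,u,u)$. The phases cancel in $\rho$ and in $\kappa(x,a)\overline{\kappa(z,a)}\kappa(z,y)$, where the $z$-phases cancel and the $x$-phase survives. Hence
\[
\mathrm J^\beta_x\mathcal T(\kappa,\kappa,\kappa)=(\ii t)^{\sigma}\bar M\,\partial^\beta_x\mathcal T(w,w,w),\qquad w=M\kappa.
\]
We then apply \eqref{est:homogeneous-nonlinear-energy} to $w$, using $|w|=|\kappa|$ pointwise in $x$ in $L^2_y$ and $t^\sigma\|w\|_{\dot H^\sigma}\sim\sum_{|\beta|=\sigma}\|\mathrm J^\beta\kappa\|$, which is comparable to $\||\mathrm J_x|^\sigma\kappa\|$ by Plancherel after conjugation. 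This yields the same $t^{-1+\delta}\mathcal B_T^3$ bound. The identification of homogeneous $\mathrm J$-norms with $|\mathrm J_x|^\sigma$ is routine via $\sum_{|\beta|=\sigma}|x^\beta|^2\sim|x|^{2\sigma}$. Finally, we justify all computations by regularizing the data (or by working with the $C^1$ profile) and passing to the limit, which is possible by continuous dependence from Lemma~\ref{lem:LWP}.
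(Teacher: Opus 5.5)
Your proposal is correct and follows the paper's proof essentially step for step. It uses mass conservation via reality of the direct potential and self-adjointness of the exchange operator, the $\partial_x^\beta$ and $\mathrm J_x^\beta$ energy identities justified through the $C^1$ profile $\lambda$, the gauge identity $\mathcal T(\kappa,\kappa,\kappa)=\ee^{\ii|x|^2/(2t)}\mathcal T(\widetilde\kappa,\widetilde\kappa,\widetilde\kappa)$, and the cutoff estimate at scale $R=t$. The only cosmetic difference is that you cite \eqref{est:homogeneous-nonlinear-energy} from Lemma~\ref{lem:trilinear-cutoff} directly, whereas the paper re-derives that bound inline as \eqref{est:homogeneous-nonlinear-energy-time}.
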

	\begin{proof}
		Since $V$ is real-valued and $\mathsf X_{\kappa\kappa^\ast}$ is self-adjoint, we get
		\[
		\frac12\frac{\dd}{\dd t}\|\kappa(t)\|_{L_{x,y}^2}^2 = \Im \langle \kappa(t),\ii\partial_t\kappa(t) \rangle_{L_{x,y}^2} = 0,
		\]
		which proves mass conservation.
		Equation \eqref{eqn:kappa} reads
		\[
		\ii\partial_t\kappa = -\frac12\Delta_x\kappa + \mathcal T(\kappa,\kappa,\kappa).
		\]
		Let $\beta$ be a multi-index with $|\beta|=\sigma$.
        By Remark~\ref{rmk:lambda-C1},  we have $\lambda\in C^1([0,T];\Sigma^{\sigma}_x L^2_y)$.
		Since the free Schr\"odinger group is unitary and commutes with $\partial_x^{\beta}$,
        \[
          \| \partial_x^{\beta} \kappa(t) \|_{L^2_{x,y}}
          =
          \| \partial_x^{\beta} \lambda(t)\|_{L^2_{x,y}},
          \qquad
          \| \mathrm{J}_x^{\beta}\kappa (t) \|_{L^2_{x,y}}
          =
          \| x^{\beta} \lambda(t) \|_{L^2_{x,y}}.
        \]
        Differentiating $\|\partial_x^{\beta} \lambda(t) \|_{L^2_{x,y}}^2$ and using the equation for $\lambda$, we obtain
		\[
		\begin{aligned}
			\frac12\frac{\dd}{\dd t} \|\partial_x^\beta\kappa(t)\|_{L_{x,y}^2}^2 &= \Im \left\langle \partial_x^\beta\kappa(t), \partial_x^\beta \mathcal T(\kappa,\kappa,\kappa)(t) \right\rangle_{L_{x,y}^2}.
		\end{aligned}
		\]
		Consequently,
		\begin{equation}
			\label{est:ordinary-energy-reduction}
			\frac{\dd}{\dd t} \|\partial_x^\beta\kappa(t)\|_{L_{x,y}^2} \lesssim \| \partial_x^\beta \mathcal T(\kappa,\kappa,\kappa)(t) \|_{L_{x,y}^2}.
		\end{equation}

		For $t\ge1$, set
		\[
		\widetilde\kappa(t,x,y) = \ee^{-\ii|x|^2/(2t)}\kappa(t,x,y).
		\]
		By the definitions of the direct and exchange terms,
        \[
            \mathcal T(\kappa,\kappa,\kappa)
            =\ee^{\ii|x|^2/(2t)}
            \mathcal T(\widetilde\kappa,\widetilde\kappa,\widetilde\kappa).
        \]
		Since $\mathrm J_x^\beta(t)=\ee^{\ii|x|^2/(2t)}
        (\ii t\partial_x)^\beta\ee^{-\ii|x|^2/(2t)}$ commutes with
        $\ii\partial_t+\Delta_x/2$, the same calculation gives
        \begin{equation} \label{est:weighted-energy-reduction}
            \frac{\dd}{\dd t} \|\mathrm J_x^\beta\kappa(t)\|_{L^2_{x,y}}
            \lesssim t^\sigma
            \|\partial_x^\beta\mathcal T(\widetilde\kappa,\widetilde\kappa,\widetilde\kappa)(t)\|_{L^2_{x,y}}.
        \end{equation}
		Therefore, it suffices to prove
		\begin{equation}
			\label{est:two-nonlinear-energy-bounds}
			\begin{aligned}
				\| \partial_x^\beta \mathcal T(\kappa,\kappa,\kappa)(t) \|_{L_{x,y}^2} &\lesssim t^{-1+\delta}\mathcal B_T(\kappa)^3,\\
				t^\sigma \| \partial_x^\beta \mathcal T( \widetilde\kappa, \widetilde\kappa, \widetilde\kappa )(t) \|_{L_{x,y}^2} &\lesssim t^{-1+\delta}\mathcal B_T(\kappa)^3.
			\end{aligned}
		\end{equation}

        Set $F(x)=\|f(x,\cdot)\|_{L^2_y}$, and define $G,H$ similarly. Cauchy--Schwarz in $a$ and Minkowski's inequality give
        \[
        \begin{aligned}
            \|(K*\rho_{fg^*})(x)h(x,\cdot)\|_{L^2_y}
            &\le (|K|*(FG))(x)H(x),\\
            \|\mathsf X^K_{fg^*}h(x,\cdot)\|_{L^2_y}
            &\le F(x)(|K|*(GH))(x).
        \end{aligned}
        \]
        For $K\in L^1$ and $2\le p_j\le\infty$ with $\sum_{j=1}^3p_j^{-1}=1/2$,
        Young's and H\"older's inequalities imply
        \[
            \|\mathcal T_K(f,g,h)\|_{L^2_{x,y}}
            \lesssim\|K\|_{L^1}
            \|f\|_{L^{p_1}_xL^2_y}\|g\|_{L^{p_2}_xL^2_y}
            \|h\|_{L^{p_3}_xL^2_y}.
        \]
        For $K\in L^\infty$, the same pointwise bounds give
        \[
            \|\mathcal T_K(f,g,h)\|_{L^2_{x,y}}
            \lesssim\|K\|_{L^\infty}
            \|f\|_{L^2_{x,y}}\|g\|_{L^2_{x,y}}\|h\|_{L^2_{x,y}}.
        \]

        For smooth $u$, we compute
        \[
            \partial_x^\beta\mathcal T_K(u,u,u)
            =\sum_{\beta_1+\beta_2+\beta_3=\beta}
            C_{\beta_1,\beta_2,\beta_3}
            \mathcal T_K(\partial_x^{\beta_1}u,
            \partial_x^{\beta_2}u,\partial_x^{\beta_3}u).
        \]
        Take $p_j$ as in the proof of Lemma~\ref{lem:trilinear-cutoff}. Since $\sum_j|\beta_j|=\sigma$,
        we have $\sum_jp_j^{-1}=1/2$.
        We apply the $L_y^2$-valued Gagliardo--Nirenberg inequality and interpolation to obtain
        \[
        \begin{aligned}
            \|\partial_x^{\beta_j}u\|_{L^{p_j}_xL^2_y}
            &\lesssim\|u\|_{L^\infty_xL^2_y}^{1-|\beta_j|/\sigma}
            \|u\|_{\dot H^\sigma_xL^2_y}^{|\beta_j|/\sigma},\\
            \|\partial_x^{\beta_j}u\|_{L^2_{x,y}}
            &\lesssim\|u\|_{L^2_{x,y}}^{1-|\beta_j|/\sigma}
            \|u\|_{\dot H^\sigma_xL^2_y}^{|\beta_j|/\sigma}.
        \end{aligned}
        \]
        Split $V=V_{<t}+V_{\ge t}$. We have
        $\|V_{<t}\|_{L^1}\lesssim t^{d-1}$ and
        $\|V_{\ge t}\|_{L^\infty}=t^{-1}$.
        Applying the bound for $K\in L^1$ to the terms with $V_{<t}$
        and the bound for $K\in L^\infty$ to those with $V_{\ge t}$, then summing over
        $|\beta|=\sigma$, we obtain
        \begin{equation}\label{est:homogeneous-nonlinear-energy-time}
            \|\mathcal T(u,u,u)\|_{\dot H^\sigma_xL^2_y}
            \lesssim
            \left(t^{d-1}\|u\|_{L^\infty_xL^2_y}^2
            +t^{-1}\|u\|_{L^2_{x,y}}^2\right)
            \|u\|_{\dot H^\sigma_xL^2_y}.
        \end{equation}
        By the density argument, this estimate holds for every $u\in H^\sigma_xL^2_y$.

        By the definition of
        $\mathcal B_T(\kappa)$, we have
        \[
        \begin{aligned}
            t^{d-1} \| \kappa(t) \|_{L^{\infty}_x L^2_y}^2 + t^{-1} \| \kappa(t) \|_{L^2_{x,y}}^2 
            &\lesssim t^{-1}\mathcal B_T(\kappa)^2,
        \end{aligned}
        \]
        and similarly for $\widetilde\kappa(t)$ since $|\widetilde\kappa(t)|=|\kappa(t)|$.
        Moreover,
        \[
        \begin{aligned}
            \|\kappa(t)\|_{\dot H^\sigma_xL^2_y}
            &\lesssim t^\delta\mathcal B_T(\kappa),\\
            \|\widetilde\kappa(t)\|_{\dot H^\sigma_xL^2_y}
            &\sim t^{-\sigma}\||\mathrm J_x(t)|^\sigma\kappa(t)\|_{L^2_{x,y}}
            \lesssim t^{-\sigma+\delta}\mathcal B_T(\kappa).
        \end{aligned}
        \]
        Applying \eqref{est:homogeneous-nonlinear-energy-time} to $\kappa$ and $\widetilde\kappa$ proves
        \eqref{est:two-nonlinear-energy-bounds}.
        Combining \eqref{est:ordinary-energy-reduction} and
        \eqref{est:weighted-energy-reduction} completes the proof.

	\end{proof}

	\section{Resonant equation}

	Combining Lemma \ref{lem:energy} with local bounds on $[0,1]$, we obtain
    $$
    \|  \kappa (t) \|_{L^2_{x,y}}  + t^{-\delta} \left( \| |\mathrm{J}_x(t)|^{\sigma} \kappa (t) \|_{L^2_{x,y}} +\|  \kappa (t) \|_{H^{\sigma}_xL^2_{y}}  \right) \lesssim \|\kappa_0\|_{H_x^\sigma L_y^2}+\||x|^\sigma\kappa_0\|_{L^2_{x,y}}+\mathcal{B}_T(\kappa)^3
    $$
    for $1\le t\le T$.
    It remains to estimate $t^{d/2}\|\kappa(t)\|_{L_x^\infty L_y^2}$.
    For this purpose, we identify the leading cubic term in the equation for $\widehat\lambda$. The corresponding resonant equation preserves the $L^2_y$ norm for each $k$. We estimate the remainder in Section~\ref{sec:decayestimates}. Motivated by the arguments of Nguyen--You \cite{NguyenYou2025} and Kato--Pusateri \cite{Kato2011}, we set $\eta=t\ell$ and formally replace $r-\eta/t$ and $k-\eta/t$ by $r$ and $k$. 

	\subsection{Decomposition of the profile equation}\label{sec:Decomp of profile eqn}
    Recall that $d\ge 2$ and $V(x)=|x|^{-1}$. The equation for the kernel of $\kappa$ is
	  \begin{equation}
        \begin{aligned}
			\ii \partial_t \kappa(t,x,y) &= - \frac{1}{2}\Delta_x \kappa(t,x,y) + (V \ast_x \rho_{\kappa\kappa^{\ast}} )(t,x) \kappa(t,x,y)\\
			&\quad- \int V(x-z) \kappa(t,x,a) \overline{\kappa(t,z,a)} \kappa(t,z,y) \, \dd a \, \dd z.
		\end{aligned}
	\end{equation}
	Taking the Fourier transform in $x$, we obtain
	\begin{equation}
		\begin{aligned}
			\ii \partial_t \widehat{\kappa}(t,k,y) &= \frac{|k|^2}{2} \widehat{\kappa}(t,k,y) + (2\pi)^{-2d} \int \widehat{V}(\ell) \widehat{\kappa}(t,r,a) \overline{\widehat{\kappa}(t,r-\ell, a)} \widehat{\kappa}(t,k-\ell, y) \, \dd r \, \dd a \, \dd \ell\\
			&\quad - (2\pi)^{-2d} \int \widehat{V}(\ell) \widehat{\kappa}(t,k-\ell, a) \overline{\widehat{\kappa}(t,r-\ell, a)} \widehat{\kappa}(t,r,y) \, \dd r \, \dd a \, \dd \ell.
		\end{aligned}
	\end{equation}
	Recalling that $\widehat{\lambda}(t,k,y)= \ee^{\ii t|k|^2/2} \widehat{\kappa}(t,k,y)$, the equation becomes
	\begin{equation}\label{eqn:profile}
		\begin{aligned}
			\ii \partial_t \widehat{\lambda}(t,k,y) &= (2\pi)^{-2d} \int \ee^{\ii t (k-r)\cdot \ell} \widehat{V}(\ell) \Flam(t,r,a) \overline{\Flam(t,r-\ell, a)} \Flam(t,k-\ell,y) \, \dd r \, \dd a \, \dd \ell \\
			&\quad - (2\pi)^{-2d} \int \ee^{\ii t (k-r)\cdot \ell} \widehat{V}(\ell) \Flam(t,k-\ell,a) \overline{\Flam(t,r-\ell, a)} \Flam(t,r,y) \, \dd r \, \dd a \, \dd \ell.
		\end{aligned}
	\end{equation}

	For $t >0$ and Schwartz functions $f$, $g$, $h$ on $\mathbb{R}^d$, set $\eta=t\ell$. Since $\widehat V(\eta/t)=t^{d-1}\widehat V(\eta)$, we have
	\begin{equation}\label{eqn:profile-RHS}
		\begin{aligned}
			&(2\pi)^{-2d} \int \ee^{\ii t(k-r)\cdot\ell}\widehat V(\ell)
			\widehat f(r)\overline{\widehat g(r-\ell)}\widehat h(k-\ell) \, \dd r \, \dd\ell \\
			&\quad=(2\pi)^{-2d}t^{-1}\int \ee^{\ii(k-r)\cdot\eta}\widehat V(\eta) 
			\widehat f(r) \overline{\widehat g(r-\eta/t)} \widehat h(k-\eta/t) \, \dd r \, \dd\eta.
		\end{aligned}
	\end{equation}
    Replacing $\widehat g(r-\eta/t)$ and $\widehat h(k-\eta/t)$ by $\widehat g(r)$ and $\widehat h(k)$ suggests the leading term
	\begin{equation}
		\begin{aligned}
			\widehat{\mathcal R}_0(t,k)
			&:=(2\pi)^{-2d}t^{-1}\widehat h(k)\int
			\ee^{\ii(k-r)\cdot\eta}\widehat V(\eta)
			\widehat f(r)\overline{\widehat g(r)}\,\dd r\,\dd\eta\\
			&=(2\pi)^{-d}t^{-1}\big(V*(\widehat f\,\overline{\widehat g})\big)(k)\widehat h(k),
		\end{aligned}
	\end{equation}
    where the last equality follows from the inverse Fourier transform.
    Define the remainder $\widehat{\mathcal E}_0$ by
	\begin{equation}
		\begin{aligned}
			\widehat{\mathcal E}_0(t,k)
			:={}&(2\pi)^{-2d}t^{-1}\int \ee^{\ii(k-r)\cdot\eta}\widehat V(\eta)\widehat f(r)
            \Big(\overline{\widehat g(r-\eta/t)}\widehat h(k-\eta/t)
			-\overline{\widehat g(r)}\widehat h(k)\Big) \,\dd r\,\dd\eta.
		\end{aligned}
	\end{equation}
	Thus, the right-hand side of \eqref{eqn:profile-RHS} equals $\widehat{\mathcal R}_0+\widehat{\mathcal E}_0$.

    To estimate the remainder in Section~\ref{sec:decayestimates}, we write it in physical space. Expanding the Fourier transforms, we find
	\begin{equation}
		\begin{aligned}
			&\widehat f(r)\Big(
			\overline{\widehat g(r-\eta/t)}\widehat h(k-\eta/t)
			-\overline{\widehat g(r)}\widehat h(k)\Big)\\
			&\quad=\int \ee^{-\ii r\cdot(x_1-x_2)}\ee^{-\ii k\cdot x_3}
			\Big( \ee^{\ii\eta\cdot(x_3-x_2)/t}-1 \Big) f(x_1)\overline{g(x_2)}h(x_3)\,\dd x_1\,\dd x_2\,\dd x_3.
		\end{aligned}
	\end{equation}
    Substituting this expansion into the remainder and integrating in $r$, we obtain
    \begin{equation}\label{eq:error-before-r-integration}
		\begin{aligned}
			&\widehat{\mathcal E}_0(t,k) \\
			&=(2\pi)^{-2d}t^{-1}\int
			\ee^{-\ii k\cdot(x_3-\eta)}
			\ee^{-\ii r\cdot(x_1-x_2+\eta)}\widehat V(\eta)
            \Big(\ee^{\ii\eta\cdot(x_3-x_2)/t}-1\Big)
			f(x_1)\overline{g(x_2)}h(x_3)
			\,\dd r\,\dd\eta\,\dd x_1\,\dd x_2\,\dd x_3 \\
            &=(2\pi)^{-d}t^{-1}\int
			\ee^{-\ii k\cdot(x_1-x_2+x_3)}\widehat V(x_2-x_1)\Big(\ee^{\frac{\ii}{t}(x_1-x_2)\cdot(x_2-x_3)}-1\Big)
			f(x_1)\overline{g(x_2)}h(x_3)\,\dd x_1\,\dd x_2\,\dd x_3.
		\end{aligned}
	\end{equation}
    For fixed $x_1,x_2$, set $x=x_1-x_2+x_3$. Then we obtain
	\begin{equation}\label{9/8-3}
		\widehat{\mathcal{E}}_0 (t,k) = (2\pi)^{-d} t^{-1} \int \ee^{-\ii k \cdot x} I_0(t,x,x_1, x_2) \, \dd x_1 \, \dd x_2 \, \dd x,
	\end{equation}
	where
	\begin{equation}
		I_0(t,x,x_1,x_2):=
		\Big(\ee^{\frac{\ii}{t}(x_1-x)\cdot(x_1-x_2)}-1\Big)
		\widehat V(x_2-x_1)
		f(x_1)\overline{g(x_2)}h(x-x_1+x_2).
	\end{equation}

    For the direct term in \eqref{eqn:profile}, apply this calculation with
    $(f,g,h) = \bigl(\lambda(t,\cdot,a),\lambda(t,\cdot,a),\lambda(t,\cdot,y)\bigr)$.
    For the exchange term, use
    $(f,g,h) = \bigl(\lambda(t,\cdot,y),\lambda(t,\cdot,a),\lambda(t,\cdot,a)\bigr)$.
	Now \eqref{eqn:profile} becomes
	\begin{equation}\label{eqn:profile-RandE}
		\begin{aligned}
			\ii \partial_t \widehat{\lambda}(t,k,y) &= t^{-1}\mathcal{R}[\widehat{\lambda}](t,k,y) + \mathcal{E}[\widehat{\lambda}](t,k,y)
		\end{aligned}
	\end{equation}
	where
	\begin{align}
		\mathcal{R}[\Flam](t,k,y) &:= (2\pi)^{-d}  (|\cdot|^{-1} \ast \| \Flam(t,\cdot) \|_{L^2_a}^2)(k) \Flam(t,k,y) \notag \\
		&\quad - (2\pi)^{-d} \int |k-\ell|^{-1} \Flam(t, \ell ,y) \overline{\Flam(t,\ell, a)} \Flam(t,k,a) \, \dd a \, \dd \ell, \label{def:RFlam} \\[1ex]
		\mathcal{E}[\Flam](t,k,y) &:= (2\pi)^{-d} t^{-1}\int \ee^{-\ii k \cdot x} I_{\lambda}(t,x,y,x_1, x_2) \, \dd x_1 \, \dd x_2 \, \dd x, \label{def:EFlam}
	\end{align}
	where
	\begin{equation}
		\begin{aligned}
			&I_\lambda(t,x,y,x_1, x_2)
			=  \left( \ee^{\frac{\ii}{t} (x_1 - x) \cdot (x_1 - x_2)} -1 \right)  \widehat{V}\left(x_2 - x_1 \right) \\
			&\qquad \times \int \Big( \lambda(t,x_1, a) \overline{\lambda(t,x_2,a)} \lambda(t, x-x_1 + x_2, y) - \lambda(t,x_1, y) \overline{\lambda(t,x_2,a)} \lambda(t, x-x_1 + x_2, a)  \Big) \, \dd a.
		\end{aligned}
	\end{equation} 

    \subsection{Resonant dynamics}
	The interaction phase $(k-r)\cdot\ell$ in \eqref{eqn:profile} vanishes at $\ell=0$, where the multiplier $\widehat V(\ell)$ is singular. The leading term $t^{-1}\mathcal R[\widehat\lambda]$ extracted above describes the Coulomb interaction near this time resonance. Omitting the remainder, we obtain the equation for the resonant profile $\widehat\nu$:
	\[
	\ii \partial_t \widehat{\nu}(t,k,y) = t^{-1}\mathcal{R}[\widehat{\nu}](t,k,y),
	\]
	or equivalently,
	\begin{equation}\label{eqn:res-eqn}
		\begin{aligned}
			\ii \partial_t \widehat{\nu}(t,k,y) &= (2\pi)^{-d} t^{-1} (|\cdot|^{-1} \ast_k \| \widehat{\nu}(t,\cdot) \|_{L^2_y}^2)(k) \widehat{\nu}(t,k,y) \\
			&\quad - (2\pi)^{-d} t^{-1} \int |k-\ell|^{-1} \overline{\widehat{\nu}(t,\ell,a)} \widehat{\nu}(t,\ell,y) \widehat{\nu}(t,k,a) \, \dd \ell \, \dd a.
		\end{aligned}
	\end{equation}
    Recall the space $X$ defined in \eqref{def:X}. 
	For fixed $f \in X$ and $k$, define $A[f](k):L^2_y \rightarrow L^2_y$ by
	\begin{equation}\label{def:A}
		\begin{aligned}
			A[f](k)h(y) &= (2\pi)^{-d} (|\cdot|^{-1} \ast_k \| f \|_{L^2_y}^2)(k) h(y) \\
			&\quad - (2\pi)^{-d}  \int |k-\ell|^{-1} \overline{f(\ell,a)} f(\ell,y) h(a) \, \dd \ell \, \dd a.
		\end{aligned}
	\end{equation}
	Then the resonant equation can be written as
	\begin{equation}\label{eqn:res-eqn-02}
		\ii \partial_t \widehat{\nu}(t,k,y) = t^{-1} A[\widehat{\nu}(t)](k) \widehat{\nu}(t,k,y).
	\end{equation}
	We first establish bounds and self-adjointness for $A$.
	\begin{lemma}\label{lem:A}
		Let $f,g \in X$ and $h \in L^2_y$.
        Then \eqref{def:A} defines a bounded self-adjoint operator $A[f](k)$ on $L^2_y$ for every $k$. The map $k \mapsto A[f](k)$ is continuous in $\mathcal{B}(L^2_y)$.
		We also have
		\begin{align}
			\| A[f](k)h \|_{L^2_y} &\lesssim \left( |\cdot|^{-1} \ast_k \rho_{ff^*} \right)\! (k)  \|h \|_{L^2_y}, \label{est:Afh-y}\\
			\| (A[f](k) - A[g](k))h \|_{L^2_y} &\lesssim \left(|\cdot|^{-1} \ast_k \left[(\rho_{ff^*}^{1/2} + \rho_{gg^*}^{1/2}) \rho_{(f-g)(f-g)^*}^{1/2}\right] \right) \!(k)  \|h \|_{L^2_y} \label{est:Afh-Agh-y}
		\end{align}
		for every $k \in \mathbb{R}^d$.
		If $h=h(k,y) \in X$, then $A[f]h$ and $(A[f]-A[g])h$ belong to $X$, and 
		\begin{align}
			\|A[f]h\|_{X} &\lesssim \| f \|_{X}^2 \| h \|_{X}, \label{est:Afh-ky}\\
			\| (A[f]-A[g])h\|_{X} &\lesssim \left( \| f \|_{X} + \| g \|_{X} \right) \| f-g \|_{X} \| h \|_{X}.\label{est:Afh-Agh-ky}
		\end{align}
	\end{lemma}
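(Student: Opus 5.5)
The plan is to write $A[f](k)=(2\pi)^{-d}\bigl(D_f(k)I-M_f(k)\bigr)$, where $D_f(k)=(V*\rho_{ff^*})(k)$ with $\rho_{ff^*}(\ell)=\|f(\ell)\|_{L^2_y}^2$, and
\[
M_f(k)=\int_{\RR^d}|k-\ell|^{-1}\,|f(\ell)\rangle\langle f(\ell)|\,\dd\ell,
\]
and to treat both pieces as operator-valued convolutions. Since $f\in X$, the density satisfies $\rho_{ff^*}\in L^1\cap L^\infty$ with $\|\rho_{ff^*}\|_{L^1}+\|\rho_{ff^*}\|_{L^\infty}\lesssim\|f\|_X^2$. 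Lemma~\ref{lem:potential} then gives $D_f(k)\lesssim\|f\|_X^2$ uniformly in $k$. Each rank-one piece has $\mathfrak S^1(L^2_y)$ norm $\|f(\ell)\|_{L^2_y}^2$, so the integral defining $M_f(k)$ converges absolutely (Bochner) in $\mathfrak S^1(L^2_y)$ with norm at most $D_f(k)$. It is a positive self-adjoint operator, because it is an integral of positive rank-one projections against the positive weight $|k-\ell|^{-1}$. Hence $A[f](k)$ is bounded and self-adjoint, and \eqref{est:Afh-y} follows from $\|D_f(k)h-M_f(k)h\|\le 2D_f(k)\|h\|$.

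For \eqref{est:Afh-Agh-y}, write $|f\rangle\langle f|-|g\rangle\langle g|=|f-g\rangle\langle f|+|g\rangle\langle f-g|$, whose trace norm is at most $\|f-g\|(\|f\|+\|g\|)$ pointwise in $\ell$. For the direct part, use $\bigl|\|f\|^2-\|g\|^2\bigr|\le\|f-g\|(\|f\|+\|g\|)$. Integrating both against $|k-\ell|^{-1}$ gives the stated bound.

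Continuity of $k\mapsto A[f](k)$ in operator norm reduces to continuity of $k\mapsto V*\mu$ in the sup norm for a vector-valued $\mu\in L^1\cap L^\infty$, with values in $\mathfrak S^1(L^2_y)$ or in $\mathbb C$. I would split $V=V_{<1}+V_{\ge1}$. Then $V_{<1}*\mu$ is continuous because $V_{<1}\in L^1$, $\mu\in L^\infty$, and translation is continuous in $L^1$. Likewise $V_{\ge1}*\mu$ is continuous because $\mu\in L^1$ and $V_{\ge1}$ is uniformly continuous and bounded. The operator-norm bound is dominated by the trace norm, so this suffices.

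For the $X$ estimates with $h\in X$, the sup part is immediate: $\sup_k\|A[f](k)h(k)\|\lesssim\|V*\rho_{ff^*}\|_{L^\infty}\|h\|_{L^\infty_kL^2_y}\lesssim\|f\|_X^2\|h\|_X$. The $L^2_{k,y}$ part follows from the same pointwise bound by H\"older, using $\|h\|_{L^2_{k,y}}$.

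The continuity of $k\mapsto A[f](k)h(k)$ in $L^2_y$, needed for membership in $C_b$, follows by writing
\[
A[f](k)h(k)-A[f](k')h(k')=\bigl(A[f](k)-A[f](k')\bigr)h(k)+A[f](k')\bigl(h(k)-h(k')\bigr),
\]
using the operator-norm continuity above together with the uniform bound. The difference estimate \eqref{est:Afh-Agh-ky} is handled identically from \eqref{est:Afh-Agh-y}, applying Lemma~\ref{lem:potential} to the $L^1\cap L^\infty$ function $(\rho_{ff^*}^{1/2}+\rho_{gg^*}^{1/2})\rho_{(f-g)(f-g)^*}^{1/2}$. Its $L^1$ and $L^\infty$ norms are bounded by $(\|f\|_X+\|g\|_X)\|f-g\|_X$ via Cauchy--Schwarz and the sup norm.

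The main, though mild, obstacle is justifying the operator-valued integral $M_f(k)$ as a convergent $\mathfrak S^1$-valued Bochner integral with the stated kernel action; that kernel action is the identity \eqref{def:A}, which follows by Fubini. The continuity in $k$ of the vector-valued convolution is the other point that needs care; everything else is Lemma~\ref{lem:potential} plus the triangle inequality.
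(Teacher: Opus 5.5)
Your argument is essentially the paper's proof: the same split into a real scalar direct term and a Bochner-integrated family of rank-one exchange pieces, the same identity $|f\rangle\langle f|-|g\rangle\langle g|=|f-g\rangle\langle f|+|g\rangle\langle f-g|$, Lemma~\ref{lem:potential} for the $X$ bounds, and a near/far splitting of $|\cdot|^{-1}$ for continuity in $k$. Your $\mathfrak S^1(L^2_y)$ convergence is slightly stronger than the paper's convergence in $\mathcal B(L^2_y)$.

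One small correction: with a sharp cutoff, $V_{\ge1}=|x|^{-1}\mathbf 1_{\{|x|\ge1\}}$ jumps at $|x|=1$ and is not uniformly continuous. You can fix this either by using a smooth cutoff, as the paper does, or by putting the translation on $\mu$ via $\|V_{\ge1}*(\mu(\cdot+z)-\mu)\|_{L^\infty}\le\|V_{\ge1}\|_{L^\infty}\|\mu(\cdot+z)-\mu\|_{L^1(\mathfrak S^1)}\to0$. The latter works because $\ell\mapsto|f(\ell)\rangle\langle f(\ell)|$ is Bochner integrable in $\mathfrak S^1(L^2_y)$.
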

	\begin{proof}
		Fix $k\in\mathbb R^d$. The map $\ell\mapsto |f(\ell)\rangle\langle f(\ell)|$ is continuous in $\mathcal B(L^2_y)$, with $\| |f(\ell)\rangle\langle f(\ell)|\|_{\mathcal B(L^2_y)}=\|f(\ell)\|_{L^2_y}^2$.
        Since $\rho_{ff^*}\in L_k^1\cap L_k^\infty$ and $d\ge2$, Lemma~\ref{lem:potential} implies that the Bochner integral defining the exchange term of \eqref{def:A} converges in $\mathcal B(L^2_y)$, with
        \[
            \left\|(2\pi)^{-d}\int |k-\ell|^{-1}|f(\ell)\rangle\langle f(\ell)|\,\dd\ell\right\|_{\mathcal B(L^2_y)}
            \le (2\pi)^{-d}(|\cdot|^{-1}*\rho_{ff^*})(k).
        \]
        Combining this estimate with \eqref{def:A}, we obtain \eqref{est:Afh-y}.
        Finally, each rank-one operator $|f(\ell)\rangle\langle f(\ell)|$ is self-adjoint, and the direct term is a real scalar multiple of the identity. Hence $A[f](k)$ is self-adjoint.

        To prove continuity in $k$, choose a smooth cutoff $\chi\in C_c^\infty(\mathbb R^d)$ with $\chi=1$ near the origin, and write
        $K_0(k)=\chi(k)|k|^{-1}$ and $K_\infty(k)=(1-\chi(k))|k|^{-1}$.
        Since $d\ge 2$, we have $K_0\in L^1$, while $K_\infty$ is bounded and uniformly continuous. By Young's inequality,
        \[
        \begin{aligned}
            &\sup_k\|A[f](k+z)-A[f](k)\|_{\mathrm{op}}
            \lesssim \|K_0(\cdot+z)-K_0\|_{L^1}\|f\|_{L_k^\infty L_y^2}^2
            +\|K_\infty(\cdot+z)-K_\infty\|_{L^\infty}\|f\|_{L_{k,y}^2}^2
            \to 0
        \end{aligned}
        \]
        as $z\to0$.
		For the direct term of $A[f](k)-A[g](k)$, we use
		\[
		|\rho_{ff^*} - \rho_{gg^*}| \le (\rho_{ff^*}^{1/2} + \rho_{gg^*}^{1/2}) \left| \rho_{ff^*}^{1/2} - \rho_{gg^*}^{1/2} \right| \le (\rho_{ff^*}^{1/2} + \rho_{gg^*}^{1/2}) \rho_{(f-g)(f-g)^*}^{1/2}.
		\]
		For the exchange term, we use
		\begin{equation*}
			\begin{aligned}
				f(\ell, y) \langle f(\ell), h\rangle_{L^2_y} - g(\ell,y) \langle g(\ell), h \rangle_{L^2_y} 
				= (f(\ell, y) - g(\ell,y)) \langle f(\ell),h  \rangle_{L^2_y} + g(\ell, y) \langle f(\ell)- g(\ell),h  \rangle_{L^2_y}.
			\end{aligned}
		\end{equation*}
		By Minkowski's inequality and Cauchy--Schwarz, we obtain \eqref{est:Afh-Agh-y}.

		For $h=h(k,y) \in X$, the continuity of $A[f](k)$ in operator norm and of $h(k)$ in $L^2_y$ implies that $A[f](k)h(k)$ is continuous in $k$. The same holds for $(A[f](k)-A[g](k))h(k)$.
		By Lemma~\ref{lem:potential} and \eqref{def:X},
		\[
		\left\| |\cdot|^{-1} \ast_k \rho_{ff^*} \right\|_{L^{\infty}_k} \lesssim \| \rho_{ff^*} \|_{L^1} + \| \rho_{ff^*} \|_{L^{\infty}} \lesssim \| f \|_{X}^2.
		\]
		Applying this estimate to \eqref{est:Afh-y}, we get \eqref{est:Afh-ky}.
		Similarly, Lemma~\ref{lem:potential} and Cauchy--Schwarz imply
		\begin{equation*}
			\begin{aligned}
				&\left\| |\cdot|^{-1} \ast_k \left[ (\rho_{ff^*}^{1/2} + \rho_{gg^*}^{1/2}) \rho_{(f-g)(f-g)^*}^{1/2} \right]\right\|_{L^{\infty}_k}
				\lesssim \left( \| f\|_{X} + \|g \|_{X} \right) \| f-g\|_{X}.
			\end{aligned}
		\end{equation*}
		Combining this estimate with \eqref{est:Afh-Agh-y}, we obtain \eqref{est:Afh-Agh-ky}.
		This completes the proof of Lemma \ref{lem:A}.
	\end{proof}

	Next, we establish the global well-posedness of this resonant equation.
	\begin{lemma}\label{lem:resonant-GWP}
		For every $\widehat{\nu}_1 \in X$, there exists a unique global-in-time solution $\widehat{\nu} \in C^1([1,\infty);X)$ to \eqref{eqn:res-eqn} with $\widehat{\nu}(1) = \widehat{\nu}_1$.
        The equation holds in $L^2_y$ for every $k \in \mathbb{R}^d$ and $t \ge 1$, and
        \[
            \| \widehat{\nu}(t,k) \|_{L^2_y} = \| \widehat{\nu}_1 (k) \|_{L^2_y}, \qquad \| \widehat{\nu}(t) \|_X = \| \widehat{\nu}_1 \|_X.
        \]
	\end{lemma}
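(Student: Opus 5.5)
The plan is a Picard iteration in $C([1,T];X)$, followed by conservation laws that allow global continuation. First, write \eqref{eqn:res-eqn-02} in integral form,
\[
\widehat\nu(t)=\widehat\nu_1-\ii\int_1^t s^{-1}A[\widehat\nu(s)]\widehat\nu(s)\,\dd s,
\]
as an identity in $X$. By \eqref{est:Afh-ky} and \eqref{est:Afh-Agh-ky}, the map $f\mapsto A[f]f$ is locally Lipschitz on $X$, with
\[
\|A[f]f-A[g]g\|_X\lesssim(\|f\|_X+\|g\|_X)^2\|f-g\|_X.
\]
Hence the right-hand side is a contraction on a ball of radius $2\|\widehat\nu_1\|_X$ in $C([t_0,t_0+\tau];X)$, with $\tau\sim t_0\|\widehat\nu_1\|_X^{-2}$; even $\tau\sim\|\cdot\|_X^{-2}$ suffices because $s^{-1}\le1$. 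This produces a unique local solution $\widehat\nu\in C^1([1,T];X)$, and uniqueness in $C([1,T];X)$ follows from the same Lipschitz bound and Gr\"onwall. The existence time depends only on $\|\widehat\nu(t_0)\|_X$, so a uniform bound on the $X$ norm gives global existence by iteration.

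Second, the equation must hold in $L^2_y$ pointwise in $k$. Since the $X$ norm dominates $\sup_k\|\cdot(k)\|_{L^2_y}$, evaluation at a fixed $k$ is a bounded linear map $X\to L^2_y$. Applying it to the $C^1$ identity in $X$ gives $\widehat\nu(\cdot,k)\in C^1([1,T];L^2_y)$ and
\[
\ii\partial_t\widehat\nu(t,k)=t^{-1}A[\widehat\nu(t)](k)\widehat\nu(t,k)
\]
in $L^2_y$ for every $k$. Here one must check that $(A[f]f)(k)$, understood as an element of $X$, coincides with the operator $A[f](k)$ applied to $f(k)$; this holds by the construction in Lemma~\ref{lem:A}.

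Third, the conservation law. For fixed $k$, differentiate:
\[
\frac{\dd}{\dd t}\|\widehat\nu(t,k)\|_{L^2_y}^2=2\,\mathrm{Re}\,\langle\widehat\nu,\partial_t\widehat\nu\rangle=2t^{-1}\,\mathrm{Im}\,\langle\widehat\nu(t,k),A[\widehat\nu(t)](k)\widehat\nu(t,k)\rangle=0,
\]
by self-adjointness of $A[\widehat\nu(t)](k)$ (Lemma~\ref{lem:A}). Thus $\|\widehat\nu(t,k)\|_{L^2_y}=\|\widehat\nu_1(k)\|_{L^2_y}$ for all $k$. Integrating in $k$ and taking the supremum in $k$ yields $\|\widehat\nu(t)\|_X=\|\widehat\nu_1\|_X$ on the local interval. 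This uniform bound feeds back into the first step: the local existence time is bounded below uniformly, so the solution extends to $[1,\infty)$, and uniqueness holds on every compact interval.

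The main technical point, though mild, is the identification in the second step. We need that the $X$-valued derivative restricts to the pointwise-in-$k$ $L^2_y$ derivative, and that $A[f]f\in X$ is continuous in $k$ (also from Lemma~\ref{lem:A}). Once these are in place, the self-adjointness computation is immediate.
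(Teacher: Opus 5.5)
Your proposal is correct and follows essentially the same route as the paper. Both use a contraction argument in $C([t_0,t_0+T];X)$ driven by the Lipschitz bound $\|A[f]f-A[g]g\|_X\lesssim(\|f\|_X+\|g\|_X)^2\|f-g\|_X$, evaluate at fixed $k$ to obtain the $L^2_y$-valued equation, and use self-adjointness of $A[\widehat\nu(t)](k)$ to conserve $\|\widehat\nu(t,k)\|_{L^2_y}$ and hence $\|\widehat\nu(t)\|_X$, which gives global continuation. The differences are cosmetic: you bound $s^{-1}\le 1$ where the paper uses $\log(1+T)$, and you continue via a uniform local existence time rather than a blow-up alternative.
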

	\begin{proof}
		Lemma~\ref{lem:A} shows that $f\mapsto A[f]f$ maps $X$ into itself. Using
        \[
            A[f]f-A[g]g= (A[f]-A[g])f + A[g](f-g),
        \]
        we obtain
        \begin{equation}\label{est:A-diff}
           \| A[f]f-A[g]g\|_X
           \lesssim
           (\|f\|_X + \|g\|_X)^2 \|f-g\|_X .
        \end{equation}
		If $\widehat{\nu}_1 =0$, then the zero solution is unique by \eqref{est:A-diff} and Gronwall's inequality. Otherwise, let
		\[
		B_T = \Bigl\{ f \in C \bigl([1,1+T], X \bigr) : \sup_{t \in [1,1+T]} \| f(t) \|_{X} \le 2 \| \widehat{\nu}_1 \|_{X} \Bigr\}
		\]
		be the closed ball.
		Define a map $\Phi$ on $B_T$ by
        \[
            \Phi(v)(t):= \widehat{\nu}_1 - \ii \int_1^t \tau^{-1} A[v(\tau)] v(\tau) \, \dd \tau,
            \qquad
            v \in B_T.
        \]
        By Lemma~\ref{lem:A} and \eqref{est:A-diff},
        \[
        \begin{aligned}
            \| \Phi(v) \|_{C([1,1+T];X)}
            &\le 
            \| \widehat{\nu}_1 \|_X
            +
            C \| \widehat{\nu}_1 \|_X^3 \log (1+T) , \\
            \| \Phi(v)-\Phi(w) \|_{C([1,1+T];X)}
            &\le C \| \widehat{\nu}_1 \|_X^2 \log (1+T) \| v-w \|_{C([1,1+T];X)},
        \end{aligned}
        \]
        where $C>0$ only depends on $d$. Choose $T>0$ so that $C\| \widehat{\nu}_1 \|_X^2 \log (1+T) \le 1/2$. Then $\Phi$ maps $B_T$ into itself and is a contraction. Its fixed point belongs to $C^1([1,1+T];X)$ since the integrand is continuous in $X$.

        For any starting time $t_0 \ge 1$, the same argument applies since
        \[
            \int_{t_0}^{t_0+T} \tau^{-1} \, \dd \tau = \log (1+T/t_0) \le \log (1+T).
        \]
        We thus obtain a unique maximal solution $\widehat{\nu} \in C^1 ([1,T_*);X)$. If $T_* <\infty$, then
		\begin{equation}\label{eqn:resonant-blowup}
			\limsup_{t \rightarrow T_*} \| \widehat{\nu}(t) \|_{X} = \infty.
		\end{equation}
        Since $\| f(k)\|_{L^2_y} \le \| f \|_X$ for each $k\in \mathbb{R}^d$, evaluation at $k$ is a bounded map from $X$ to $L^2_y$. Hence,
        \[
            \widehat\nu(t,k)=\widehat\nu_1(k)
            -\ii\int_1^t s^{-1}A[\widehat\nu(s)](k)\widehat\nu(s,k)\,\dd s.
        \]
        Moreover, $\widehat{\nu}(\cdot, k)$ is $C^1$ in $L^2_y$ and satisfies \eqref{eqn:res-eqn-02} for every $k$. Since $A[\widehat\nu(t)](k)$ is self-adjoint, we compute
		\begin{equation*}
        \begin{aligned}
			\frac{1}{2}\frac{\dd}{\dd t} \| \widehat{\nu}(t,k) \|_{L^2_y}^2 
            &=  \Im \langle \widehat{\nu}(t,k), \, \ii\partial_t \widehat{\nu}(t,k)\rangle_{L^2_y}
            = t^{-1} \Im \langle \widehat{\nu}(t,k), A[\widehat{\nu}(t)](k) \widehat{\nu}(t,k) \rangle_{L^2_y} =0 .
        \end{aligned}
		\end{equation*}
		Therefore $\| \widehat{\nu}(t,k)\|_{L^2_y} = \|\widehat{\nu}_1(k)\|_{L^2_y}$
		for every $k\in \mathbb{R}^d$ and $1 \le t <T_*$. In particular, $\| \widehat{\nu}(t) \|_X = \| \widehat{\nu}_1 \|_X$. This rules out \eqref{eqn:resonant-blowup}, so the solution is global.
		This completes the proof of Lemma \ref{lem:resonant-GWP}.
	\end{proof}

    \subsection{Unitary propagator}
    For the modified scattering statement, we also need the linear evolution generated by $t^{-1}A[f(t)](k)$.
	For $f\in C([1,\infty);X)$ and every $k \in \mathbb{R}^d$, define the associated propagator on $L^2_y$ by
    \begin{equation}\label{def:Uf-propagator}
		\ii\partial_t\mathcal U_f(t,s;k) =t^{-1}A[f(t)](k)\mathcal U_f(t,s;k),\qquad \mathcal U_f(s,s;k)=I.
	\end{equation}
    \begin{lemma}\label{lem:unitarypropagator}
		Let $f\in C([1,\infty);X)$. The propagator in \eqref{def:Uf-propagator} exists uniquely for every $t,s\ge1$ and $k\in \mathbb{R}^d$, and is unitary on $L_y^2$.
        For $r\ge1$, it satisfies
		\begin{equation*}
		\begin{gathered}
		    \mathcal U_f(t,r;k)\mathcal U_f(r,s;k)=\mathcal U_f(t,s;k),\qquad \mathcal U_f(t,s;k)^{-1}=\mathcal U_f(s,t;k)=\mathcal U_f(t,s;k)^*,\\
            \ii\partial_s\mathcal U_f(t,s;k) =-s^{-1}\mathcal U_f(t,s;k)A[f(s)](k).
		\end{gathered}
		\end{equation*}
        For every $1\le a<b<\infty$, the map $(t,s)\mapsto\mathcal U_f(t,s;\cdot)$ belongs to $C^1\! \left([a,b]^2;C_b\!\left(\mathbb R^d; \mathcal B(L_y^2)\right)\right)$.
	\end{lemma}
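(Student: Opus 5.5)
The plan is to build $\mathcal U_f$ as a time-ordered exponential on the Banach space $C_b(\mathbb R^d;\mathcal B(L^2_y))$, treating all frequencies $k$ at once so that continuity in $k$ comes for free. Set $\mathcal A(t):=t^{-1}A[f(t)](\cdot)$.

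\textbf{Step 1: regularity of the generator.} By Lemma~\ref{lem:A}, for each $t$ the map $k\mapsto A[f(t)](k)$ is continuous in operator norm. Moreover, $\sup_k\|A[f(t)](k)\|_{\mathrm{op}}\lesssim\|f(t)\|_X^2$. The Lipschitz bound \eqref{est:Afh-Agh-y}, combined with Lemma~\ref{lem:potential}, gives
\[
\sup_k\|A[f(t)](k)-A[f(t')](k)\|_{\mathrm{op}}\lesssim(\|f(t)\|_X+\|f(t')\|_X)\|f(t)-f(t')\|_X .
\]
Hence $t\mapsto\mathcal A(t)$ is a continuous map from $[1,\infty)$ into the Banach algebra $\mathcal C:=C_b(\mathbb R^d;\mathcal B(L^2_y))$, with pointwise multiplication in $k$. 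It is bounded on every compact interval $[a,b]$.

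\textbf{Step 2: construction and uniqueness.} Fix $s\ge1$ and solve
\[
\mathcal U(t,s)=I-\ii\int_s^t\mathcal A(\tau)\mathcal U(\tau,s)\,\dd\tau
\]
in $C([a,b];\mathcal C)$. This is a linear Volterra equation with bounded continuous kernel, so it is solved either by the Dyson series, which converges by the factorial bound $\prod\sup\|\mathcal A\|\,|t-s|^n/n!$, or by Picard iteration. This gives existence for all $t$, with $t<s$ allowed, and $C^1$ dependence in $t$. Uniqueness follows from Gronwall's inequality, applied pointwise in $k$ or in $\mathcal C$. The identities are then handled one by one.

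\emph{Composition.} The cocycle identity $\mathcal U(t,r)\mathcal U(r,s)=\mathcal U(t,s)$ holds because both sides solve the same linear equation in $t$ and agree at $t=r$.

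\emph{Unitarity.} Self-adjointness of $A$ (Lemma~\ref{lem:A}) gives
\[
\partial_t\bigl(\mathcal U(t,s)^*\mathcal U(t,s)\bigr)=\ii\,\mathcal U^*\mathcal A\,\mathcal U-\ii\,\mathcal U^*\mathcal A\,\mathcal U=0,
\]
so $\mathcal U(t,s)^*\mathcal U(t,s)=I$. Combined with the invertibility $\mathcal U(t,s)\mathcal U(s,t)=I$ from the cocycle identity, this yields $\mathcal U(t,s)^{-1}=\mathcal U(s,t)=\mathcal U(t,s)^*$.

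\emph{Derivative in $s$.} Differentiate $\mathcal U(t,s)\mathcal U(s,t)=I$ in $s$ and use the known $t$-derivative of $\mathcal U(s,t)$. This gives $\ii\partial_s\mathcal U(t,s)=-s^{-1}\mathcal U(t,s)A[f(s)]$.

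\textbf{Step 3: joint $C^1$ regularity on $[a,b]^2$.} Both partial derivatives have the form $\mathcal A(t)\mathcal U(t,s)$ and $\mathcal U(t,s)\mathcal A(s)$. These are products of maps continuous in $(t,s)$ with values in $\mathcal C$, so it remains to check that $\mathcal U$ itself is jointly continuous. Joint continuity follows from the estimate
\[
\|\mathcal U(t,s)-\mathcal U(t',s')\|\le\|\mathcal U(t,s)-\mathcal U(t',s)\|+\|\mathcal U(t',s)-\mathcal U(t',s')\|,
\]
which is controlled by $\sup\|\mathcal A\|\,(|t-t'|+|s-s'|)$ using the Lipschitz bounds from the integral equation together with the cocycle identity.

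\textbf{Main obstacle.} The main point needing care is Step~1. The argument is only legitimate if $A[f(t)]$ is continuous in $t$ uniformly in $k$, that is, in the sup norm of $\mathcal C$. This is exactly what \eqref{est:Afh-Agh-ky}-type bounds in operator norm deliver, via \eqref{est:Afh-Agh-y} and Lemma~\ref{lem:potential}. Everything after that is standard linear ODE theory in a Banach algebra.
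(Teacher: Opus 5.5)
Your proposal is correct and follows essentially the same route as the paper. Both arguments solve the linear integral equation with generator $t^{-1}A[f(t)]$, continuous into $C_b(\mathbb R^d;\mathcal B(L^2_y))$; both get the cocycle and inverse identities from uniqueness and unitarity from self-adjointness plus invertibility; and both obtain the $s$-derivative by differentiating a factorization (the paper uses $\mathcal U_f(t,s)=\mathcal U_f(t,a)\mathcal U_f(s,a)^*$, you use $\mathcal U(t,s)\mathcal U(s,t)=I$, with differentiability in $s$ supplied by $\mathcal U(t,s)=\mathcal U(s,t)^*$), while your Steps 1 and 3 simply make explicit the uniform-in-$k$ continuity of the generator and the joint continuity in $(t,s)$ that the paper leaves implicit.
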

	\begin{proof}
        Fix $1\le a<b<\infty$. By Lemma~\ref{lem:A}, the map $t\mapsto t^{-1}A[f(t)]$ is continuous on $[a,b]$ with values in $C_b\!\left(\mathbb R^d;\mathcal B(L_y^2)\right)$,
        and
        \[
            \sup_{t\in[a,b]}\sup_k t^{-1}\|A[f(t)](k)\|_{\mathrm{op}}
            \lesssim a^{-1}\sup_{t\in[a,b]}\|f(t)\|_X^2<\infty.
        \]
        The standard existence and uniqueness theorem for linear ODEs therefore provides a unique solution of
        \[
            \mathcal U_f(t,s;k)
            =I-\ii\int_s^t \tau^{-1}A[f(\tau)](k)\mathcal U_f(\tau,s;k)\,\dd\tau
        \]
        for $t,s\in[a,b]$. For each $s$, the map $t\mapsto\mathcal U_f(t,s;\cdot)$ is continuously differentiable in the norm $\sup_k\|\cdot\|_{\mathrm{op}}$, and \eqref{def:Uf-propagator} holds in this norm.

        For $r\in[a,b]$, uniqueness implies
        \[
            \mathcal U_f(t,r;k)\mathcal U_f(r,s;k)=\mathcal U_f(t,s;k),
            \qquad
            \mathcal U_f(t,s;k)^{-1}=\mathcal U_f(s,t;k).
        \]
        For $g,h\in L_y^2$, self-adjointness of $A[f(t)](k)$ implies
        \[
            \frac{\dd}{\dd t}
            \langle\mathcal U_f(t,s;k)g,\mathcal U_f(t,s;k)h\rangle_{L_y^2}=0.
        \]
        Thus $\mathcal U_f(t,s;k)$ is an isometry, and its invertibility implies unitarity.

        Finally, differentiating
        \[
            \mathcal U_f(t,s;k)=\mathcal U_f(t,a;k)\mathcal U_f(s,a;k)^*
        \]
        with respect to $s$ and using self-adjointness, we obtain
        \[
            \ii\partial_s\mathcal U_f(t,s;k)
            =-s^{-1}\mathcal U_f(t,s;k)A[f(s)](k).
        \]
        The representation above establishes joint $C^1$ regularity in $(t,s)$ in the norm $\sup_k\|\cdot\|_{\mathrm{op}}$.
        Since $[a,b]$ is arbitrary, the lemma follows.
    \end{proof}

    \begin{remark}\label{rem:unitary-preserves-X}
        Assume $f\in C([1,\infty);X)$, as in
        Lemma~\ref{lem:unitarypropagator}.
        For $t,s\ge1$ and $h\in X$, define
        \[
            (\mathcal U_f(t,s)h)(k):=\mathcal U_f(t,s;k)h(k).
        \]
        Continuity in $k$ and unitarity on $L_y^2$ give
        $\mathcal U_f(t,s)h\in X$ and
        $\|\mathcal U_f(t,s)h\|_X=\|h\|_X$.
        Its inverse on $X$ is $\mathcal U_f(s,t)$.
        It is also unitary on $L_{k,y}^2$ and an isometry on every
        $L_k^pL_y^2$, $2\le p\le\infty$.
        For every $1\le a<b<\infty$, the map
        $(t,s)\mapsto\mathcal U_f(t,s)$ belongs to
        $C^1([a,b]^2;\mathcal B(X))$, with
        \begin{equation}\label{eqn:ODE-Uf-X}
        \begin{aligned}
            \ii\partial_t\mathcal U_f(t,s)
            =t^{-1}A[f(t)]\mathcal U_f(t,s),
            \qquad
            \ii\partial_s\mathcal U_f(t,s)
            =-s^{-1}\mathcal U_f(t,s)A[f(s)].
        \end{aligned}
        \end{equation}
        Indeed, for $B\in C_b(\mathbb R^d;\mathcal B(L_y^2))$ and $h \in X$, we have
        $\|Bh\|_X\le\sup_k\|B(k)\|_{\mathrm{op}}\|h\|_X$.
        Thus the $C^1$ regularity in Lemma~\ref{lem:unitarypropagator} also holds in $\mathcal{B}(X)$.

        For $1\le a<b<\infty$ and $f\in C([a,b];X)$, the proof of Lemma~\ref{lem:unitarypropagator} constructs $\mathcal U_f(t,s;k)$ for $t,s\in[a,b]$.
        All the preceding conclusions hold for these times.
    \end{remark}

	\section{Error and decay estimates}\label{sec:decayestimates}
	In this section, we estimate the remainder in the profile equation and then use this bound to prove the dispersive estimate for $\kappa$. This proves Proposition~\ref{prop:bootstrap}.
    We recall that
	\begin{equation}\label{eqn:profile-decay}
		\ii \partial_t \Flam(t,k,y) = t^{-1}\mathcal{R}[\Flam](t,k,y) + \mathcal{E}[\Flam](t,k,y)
	\end{equation}
	where $\mathcal{R}$ and $\mathcal{E}$ are defined in \eqref{def:RFlam} and \eqref{def:EFlam}, respectively.
	The following lemma gives a decay estimate for the error term $\mathcal{E}[\Flam](t)$ in $X$, defined in \eqref{def:X}. When $\theta > 3\delta$, this estimate is integrable in time and yields a uniform bound for $\|\Flam(t)\|_X$.
	\begin{lemma}\label{lem:error}
		Let $d\ge 2$, $\sigma = \lfloor d/2 \rfloor + 1$, and $T >1$. Let $\kappa$ be a solution to \eqref{eqn:kappa} on $[0,T]$, supplied by Lemma~\ref{lem:LWP}.
		Set $\lambda(t) = \ee^{-\ii t \Delta/2} \kappa(t)$.
		With $\delta = (\sigma-d/2)/24$ as in \eqref{def:bootstrapnorm}, for $0 \le \theta < \sigma - d/2$, we have 
		\[
		\| \mathcal{E}[\Flam](t) \|_{X} \lesssim t^{-1-\theta + 3 \delta} \mathcal{B}_{T}(\kappa)^3
		\]
		for $1 \le t \le T$.
		Moreover, for every $k \in \RR^d$ and $1\le t_1 \le t_2 \le T$,
        \[
            \| \Flam(t_2,k)\|_{L^2_y}
            \lesssim
            \| \Flam(t_1,k)\|_{L^2_y}
            +\mathcal{B}_T(\kappa)^3 \int_{t_1}^{t_2} \tau^{-1-\theta +3\delta} \, \dd \tau.
        \]
		In particular, taking $\theta> 3\delta$, we obtain
		\[
		\sup_{t \in [1,T]} \| \widehat{\lambda}(t) \|_{X} \lesssim \|\widehat\lambda(1)\|_{X} + \mathcal B_T(\kappa)^3.
		\]
	\end{lemma}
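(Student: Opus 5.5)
We estimate $\mathcal E[\widehat\lambda](t)$ directly from its physical-space representation \eqref{def:EFlam}, bounding the $L^\infty_kL^2_y$ and $L^2_{k,y}$ parts of the $X$ norm separately.

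\textbf{Step 1: pointwise bound on the kernel.} For the oscillatory factor, for $0\le\theta\le1$,
\[
\bigl|\ee^{\frac{\ii}{t}(x_1-x)\cdot(x_1-x_2)}-1\bigr|\lesssim t^{-\theta}|x_1-x|^{\theta}|x_1-x_2|^{\theta}.
\]
Since $\widehat V(\eta)=c_d|\eta|^{-(d-1)}$, the weight becomes $|x_1-x_2|^{\theta-(d-1)}$. With $x_3=x-x_1+x_2$ we have $x_1-x=x_2-x_3$. The factor $|x_2-x_3|^\theta\le|x_2|^\theta+|x_3|^\theta$ is then distributed onto the profile factors $\lambda(t,x_2,\cdot)$ or $\lambda(t,x_3,\cdot)$. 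For the $L^\infty_k$ bound we drop $\ee^{-\ii k\cdot x}$ and use Cauchy--Schwarz in $a$, as in Lemma~\ref{lem:exchange-term}. Writing $\Lambda_j(x)=\|\langle x\rangle^{\theta_j}\lambda(t,x,\cdot)\|_{L^2}$, this gives
\[
\|\mathcal E[\widehat\lambda](t,k)\|_{L^2_y}\lesssim t^{-1-\theta}\int |x_1-x_2|^{\theta-(d-1)}\Lambda(x_1)\Lambda_\theta(x_2)\Lambda_\theta(x_3)\,\dd x_1\dd x_2\dd x_3,
\]
up to symmetric variants. The same bound holds for the direct and exchange parts.

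\textbf{Step 2: reduction to weighted norms.} The triple integral factorizes: the $x_3$-integral is $\|\Lambda_\theta\|_{L^1}$, and the remaining piece is a Riesz-potential pairing $\langle \Lambda, |\cdot|^{\theta-d+1}*\Lambda_\theta\rangle$. We bound it by splitting the kernel into $|\cdot|<1$ (in $L^1$, provided $\theta<1$ or $d\ge2$) and $|\cdot|\ge1$ (bounded). The $L^1_x$ norms of $\Lambda_\theta$ are controlled by $\|\langle x\rangle^{\sigma}\lambda\|_{L^2_{x,y}}$ through Cauchy--Schwarz, which needs $\sigma-\theta>d/2$; this is exactly the hypothesis $\theta<\sigma-d/2$. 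Since $\|\langle x\rangle^\sigma\lambda(t)\|_{L^2}\lesssim\|\kappa\|_{L^2}+\||\mathrm J_x(t)|^\sigma\kappa\|_{L^2}\lesssim t^{\delta}\mathcal B_T(\kappa)$, three weighted factors give $t^{-1-\theta+3\delta}\mathcal B_T^3$.

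For the $L^2_{k,y}$ norm we use Plancherel in $x$. We then estimate
\[
\Bigl\|\int I_\lambda\,\dd x_1\dd x_2\Bigr\|_{L^2_{x,y}}
\]
via Minkowski, placing the $x_3$ factor in $L^2$ instead of $L^1$. This requires one fewer Cauchy--Schwarz step, so the conditions are weaker.

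\textbf{Main obstacle.} The main difficulty is the singular weight $|x_1-x_2|^{\theta-(d-1)}$ combined with the need for $L^1$ integrability of weighted profiles. If $\theta\ge1$ one could not use the small-argument bound, but here $\theta<\sigma-d/2\le1$, which is fine. I would handle the local singularity of the Riesz kernel with an $L^\infty$-type bound: Lemma~\ref{lem:potential} applied to $\Lambda_\theta^2$ or to products. If this is insufficient, I would estimate $\|\langle x\rangle^{\sigma-\theta}\Lambda\|$ at $L^2$-level through Hardy--Littlewood--Sobolev.

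\textbf{Remaining claims.} Continuity in $k$ follows from the integrability of the representation. For the second claim, we pair \eqref{eqn:profile-decay} with $\widehat\lambda(t,k)$ in $L^2_y$. Self-adjointness of $A[\widehat\lambda(t)](k)$ from Lemma~\ref{lem:A} kills the $\mathcal R$ contribution, since $\mathcal R[f](k)=A[f](k)f(k)$. Hence
\[
\frac{\dd}{\dd t}\|\widehat\lambda(t,k)\|_{L^2_y}\le\|\mathcal E[\widehat\lambda](t,k)\|_{L^2_y},
\]
using $\widehat\lambda\in C^1(X)$ from Remark~\ref{rmk:lambda-C1}; integrating gives the estimate. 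The $L^2_{k,y}$ part is analogous by integrating over $k$. Taking $\theta>3\delta$ (admissible since $3\delta<\sigma-d/2$) makes $\int_1^\infty\tau^{-1-\theta+3\delta}\,\dd\tau$ finite, which yields the uniform bound on $\sup_{t\in[1,T]}\|\widehat\lambda(t)\|_X$.
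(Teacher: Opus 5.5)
Your proposal is correct. It reaches the same cubic bound $t^{-1-\theta}\|\langle x\rangle^\sigma\lambda(t)\|_{L^2_{x,y}}^3$ under the same condition $\sigma-\theta>d/2$, but with somewhat different tools.

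\textbf{The $X$ bound.} The paper applies Hausdorff--Young and Minkowski to bound $\|\mathcal E[\widehat\lambda](t)\|_{L^p_kL^2_y}$ for all $2\le p\le\infty$ at once by $t^{-1}\|I_\lambda\|_{L^1_{x_1,x_2}L^{p'}_xL^2_y}$. It then handles the singular weight $|x_1-x_2|^{-d+1+\theta}$ with Hardy--Littlewood--Sobolev at exponent $q=2d/(d+1+\theta)$, followed by H\"older to return to $\|\langle x\rangle^m\lambda\|_{L^2_{x,y}}$ with $m>d/2+\theta$.

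You instead treat only the two endpoints: $p=\infty$, by dropping the phase, and $p=2$, by Plancherel. That is all the $X$ norm requires. You also replace HLS by splitting $|z|^{\theta-d+1}$ into a locally integrable piece and a bounded piece, paired respectively with $L^2$ and $L^1$ norms of the weighted profiles. This is more elementary and loses nothing here, since no scale invariance is needed.

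Two side remarks on your write-up. Your caveat ``provided $\theta<1$ or $d\ge2$'' is vacuous: the local piece is always integrable, and the far piece is bounded because $\theta<\sigma-d/2\le1\le d-1$. The fallback in your ``main obstacle'' paragraph is likewise unnecessary, because Step 2 already closes the argument.

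\textbf{The second claim.} The paper writes Duhamel's formula with the unitary propagator $\mathcal U_{\widehat\lambda}$ from Remark~\ref{rem:unitary-preserves-X} and uses unitarity on $L^2_y$. You use the energy identity $\frac12\frac{\dd}{\dd t}\|\widehat\lambda(t,k)\|_{L^2_y}^2=\Im\langle\widehat\lambda(t,k),\mathcal E[\widehat\lambda](t,k)\rangle_{L^2_y}$, which follows from self-adjointness of $A[\widehat\lambda(t)](k)$. This avoids the propagator at this stage; the paper needs it anyway in Section~6. The cost is a routine step to pass from the squared norm to the norm at times where $\widehat\lambda(t,k)$ may vanish, for example by differentiating $(\|\widehat\lambda(t,k)\|_{L^2_y}^2+\varepsilon^2)^{1/2}$ and letting $\varepsilon\to0$.

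Finally, the $L^2_{k,y}$ part of the concluding uniform bound follows immediately from mass conservation and Plancherel, so that part needs no separate argument.
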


	\begin{proof}
		We first estimate the remainder in $L^p_k L^2_y$ for $2 \le p \le \infty$.
		By \eqref{def:EFlam},
		\[
		\mathcal{E}[\widehat{\lambda}](t,k,y) = (2\pi)^{-d} t^{-1} \int \mathcal{F}_x [I_{\lambda}(t,\cdot,y,x_1, x_2)](k) \, \dd x_1 \, \dd x_2.
		\]
		For $p \in [2,\infty]$ with $1/p + 1/p'= 1$, the Hausdorff--Young and Minkowski inequalities imply
		\begin{equation}\label{est:E-I}
			\| \mathcal{E}[\widehat{\lambda}](t) \|_{L^p_k L^2_y}
            \lesssim t^{-1} \| I_{\lambda}(t) \|_{  L^1_{x_1, x_2} L^{p'}_x L^2_y}.
		\end{equation}
		Recall that
		\begin{equation*}
			\begin{aligned}
				&I_{\lambda}(t,x,y,x_1, x_2) 
				=  \left( \ee^{\frac{\ii}{t} (x_1 - x) \cdot (x_1 - x_2)} -1 \right)  \widehat{V}\left( x_2 - x_1 \right) \\
				&\quad \times \int \Big( \lambda(t,x_1, a) \overline{\lambda(t,x_2,a)} \lambda(t, x-x_1 + x_2, y) - \lambda(t,x_1, y) \overline{\lambda(t,x_2,a)} \lambda(t, x-x_1 + x_2, a)  \Big) \, \dd a.
			\end{aligned}
		\end{equation*}
        Since $0 \le \theta < \sigma-d/2 \le 1$, we have
        \[
            \left| \ee^{\frac{\ii}{t} (x_1 - x) \cdot (x_1 -x_2)} -1 \right|
            \lesssim
            t^{-\theta}
            |x_1-x|^{\theta}
            |x_1-x_2|^{\theta}.
        \]
		Since $\widehat{V}(z) = c_d |z|^{1-d}$ for some constant $c_d>0$, we get
		\begin{equation}\label{est:I}
			\begin{aligned}
				&t^{-1}\|I_{\lambda}(t)\|_{L^1_{x_1,x_2} L^{p'}_x L^2_y} \\
				&\lesssim t^{-1-\theta} \int  |x_1 - x_2|^{-d+1+\theta} \|\lambda(t,x_1) \|_{L^2_y} \|\lambda(t,x_2) \|_{L^2_y} \Big\| |x_1- x|^{\theta} \|\lambda(t,x-x_1 +x_2)\|_{L^2_y} \Big\|_{L^{p'}_x} \, \dd x_1 \, \dd x_2.
			\end{aligned}
		\end{equation}
		After the change of variables $z= x-x_1 + x_2$, the triangel inequality gives
		\[
		\Big\|  |x_1 - x|^{\theta} \|\lambda(t,x-x_1 + x_2) \|_{L^2_y} \Big\|_{L^{p'}_x} \lesssim \langle x_2 \rangle^{\theta} \| \lambda(t) \|_{L^{p'}_x L^2_y} + \| \langle x \rangle^{\theta} \lambda(t) \|_{L^{p'}_x L^2_y}.
		\]
		Meanwhile, using the Hardy--Littlewood--Sobolev inequality, we get
		\[
		\int_{\RR^d \times \RR^d} |x_1 - x_2|^{-d+1+\theta} f(x_1) g(x_2) \, \dd x_1 \, \dd x_2 \lesssim \|f \|_{L^q_x} \| g \|_{L^q_x}
		\]
		where $q= 2d/(d+1+\theta) \in (1,2)$ satisfies $2/q + (d-1-\theta)/d =2$.
		Applying these two bounds to \eqref{est:I}, we find that
		\begin{equation}
			\begin{aligned}
				t^{-1} \| I_{\lambda}(t) \|_{L^1_{x_1, x_2} L^{p'}_x L^2_y} &\lesssim t^{-1-\theta} \left( \| \lambda(t) \|_{L^{p'}_x L^2_y} \| \lambda(t) \|_{L^q_x L^2_y} \| \langle x \rangle^{\theta} \lambda(t) \|_{L^q_x L^2_y} + \| \langle x \rangle^{\theta} \lambda(t) \|_{L^{p'}_x L^2_y} \| \lambda(t) \|_{L^q_x L^2_y}^2 \right) \\
				&\lesssim t^{-1-\theta} \| \langle x \rangle^m \lambda(t) \|_{L^2_{x,y}}^3,
			\end{aligned}
		\end{equation}
		for any $m> d/2 + \theta$, where the last inequality follows from H\"older's inequality. 
		Since $\theta < \sigma - d/2$, we may choose $m = \sigma$. Then we obtain from \eqref{est:E-I} and \eqref{def:bootstrapnorm} that
		\[
		\| \mathcal{E}[\widehat{\lambda}](t) \|_{L^p_k L^2_y} \lesssim t^{-1-\theta +3\delta} \mathcal{B}_{T}(\kappa)^3
		\]
		uniformly in $2 \le p \le \infty$ and $1 \le t \le T$.
        By \eqref{def:EFlam} and the preceding estimates,
        $\mathcal E[\widehat\lambda](t)\in C_b(\mathbb R^d;L_y^2)$.
        With $p=2$ and $p=\infty$, this proves the desired bound in $X$.

        By Remark~\ref{rmk:lambda-C1}, Lemma~\ref{lem:A}, and \eqref{eqn:profile-decay}, we have
        \[
            \mathcal E[\widehat\lambda]
            =\ii\partial_t\widehat\lambda
                -t^{-1}A[\widehat\lambda]\widehat\lambda
            \in C([1,T];X).
        \]

        By Remark~\ref{rem:unitary-preserves-X}, we obtain for $ 1 \le t_1 \le t_2 \le T$ that
        \[
            \Flam(t_2) = \mathcal{U}_{\Flam}(t_2,t_1)\Flam(t_1) - \ii \int_{t_1}^{t_2} \mathcal{U}_{\Flam}(t_2,\tau) \mathcal{E}[\Flam](\tau) \, \dd \tau.
        \]
        Unitarity of the propagator in $L^2_y$ implies
        \begin{equation}\label{est:profile-integral-ineq}
            \| \Flam(t_2,k)\|_{L^2_y} \le \|\Flam(t_1,k)\|_{L^2_y}
            +
            \int_{t_1}^{t_2} \| \mathcal{E}[\Flam](\tau,k)\|_{L^2_y} \, \dd \tau
        \end{equation}
        for every $k \in \RR^d$.

        Finally, we take the $L_k^2$ and $L_k^\infty$ norms in \eqref{est:profile-integral-ineq} and apply Minkowski's inequality to obtain
        \[
            \begin{aligned}
                \|\Flam(t)\|_{X}
                &\le\|\Flam(1)\|_{X}+
                \int_1^t\|\mathcal E[\Flam](s)\|_{X}\,\dd s
                \lesssim \|\Flam(1)\|_{X}+
                \mathcal B_T(\kappa)^3\int_1^t s^{-1-\theta+3\delta}\,\dd s
            \end{aligned}
        \]
        for $1 \le t \le T$.
        If $3\delta<\theta$, the last integral is finite, uniformly in $t$ and $T$.
		This completes the proof of Lemma \ref{lem:error}.

	\end{proof}

	\subsection{Proof of Proposition~\texorpdfstring{\ref{prop:bootstrap}}{\ref*{prop:bootstrap}}}
	We are ready to prove the bootstrap proposition.
	\begin{proof}
		For $0\le t\le1$, the local bounds following Lemma~\ref{lem:LWP} and Sobolev embedding yield
		\[
		\langle t \rangle^{d/2} \| \kappa(t) \|_{L^{\infty}_x L^2_y} \lesssim \| \kappa(t) \|_{H^{\sigma}_x L^2_y} \lesssim \epsilon_0 + \mathcal{B}_T(\kappa)^3.
		\]
		For $1\le t\le T$, set $\theta=6\delta$, where $\delta = (\sigma -d/2)/24$.
        Then $0<3\delta<\theta<(\sigma-d/2)/2$ and $\theta-3\delta=3\delta>0$.
        Note that
        \[
            \|\Flam(1)\|_{X}
            \lesssim
            \|\langle x\rangle^\sigma\lambda(1)\|_{L_{x,y}^2}
            \lesssim\epsilon_0.
        \]
		By Lemma \ref{lem:error}, we have
		\[
		\sup_{1 \le t \le T} \| \widehat{\lambda}(t) \|_{L^{\infty}_k L^2_y} \lesssim \epsilon_0 + \mathcal{B}_T(\kappa)^3.
		\]
		Moreover, the local bound at $t=1$, mass conservation, and Lemma \ref{lem:energy} imply
		\[
		\| \kappa(t) \|_{H^{\sigma}_x L^2_y} + \| \langle x \rangle^{\sigma} \lambda(t) \|_{L^2_{x,y}} \lesssim \epsilon_0 + \mathcal{B}_T(\kappa)^3 \int_1^t \tau^{-1+\delta} \, \dd \tau
        \lesssim
        \epsilon_0 + t^{\delta}\mathcal{B}_T(\kappa)^3.
		\]
		for $1\le t \le T$, where we used $\| |x|^{\sigma} \lambda(t) \|_{L^2_{x,y}} =\| |\mathrm{J}_x (t) |^{\sigma} \kappa(t) \|_{L^2_{x,y}}$.
		Applying Lemma \ref{lem:decay} with $m = \sigma$, we obtain
		\begin{align*}
			t^{d/2} \| \kappa(t) \|_{L^{\infty}_x L^2_y} &\lesssim \| \widehat{\lambda}(t) \|_{L^{\infty}_k L^2_y} + t^{-\theta} \| \langle x \rangle^{\sigma} \lambda(t) \|_{L^2_{x,y}} \\
			&\lesssim \epsilon_0 + \mathcal{B}_T(\kappa)^3 + t^{-\theta} \left(\epsilon_0 + t^{\delta} \mathcal{B}_T (\kappa)^3 \right) \\
			&\lesssim \epsilon_0 + \mathcal{B}_T (\kappa)^3
		\end{align*}
		uniformly in $1\le t \le T$, since $\theta > \delta$.

		Combining these estimates, we obtain
		\[
		\mathcal{B}_T (\kappa) \lesssim \epsilon_0 + \mathcal{B}_T (\kappa)^3,
		\]
		which completes the proof of Proposition \ref{prop:bootstrap}.
	\end{proof}

	\section{Modified scattering}

	In this section, we prove modified scattering and thereby complete the proofs of Theorems \ref{thm:main-kappa} and \ref{thm:main}.

	\subsection{Proof of Theorem~\texorpdfstring{\ref{thm:main-kappa}}{\ref*{thm:main-kappa}}}
	\begin{proof}
		Fix $\delta=(\sigma-d/2)/24$ and set $\theta=6\delta$, as in the proof of Proposition \ref{prop:bootstrap}.
        Then $t^{-1-\theta +3\delta} = t^{-1-3\delta}$ in Lemma~\ref{lem:error}.
		The local theory gives a solution on $[0,2]$ with $\mathcal{B}_2(\kappa) \lesssim \epsilon_0$. 
		By Proposition \ref{prop:bootstrap}, $$\mathcal B_T(\kappa)\le C_0\epsilon_0+C_1\mathcal B_T(\kappa)^3.$$
		Take $C_0$ large enough to include the bound from the local theory and choose $\epsilon_0$ so that $8C_1C_0^2\epsilon_0^2<1$.
        A standard continuity argument then shows that $$\mathcal{B}_T(\kappa) \le 2 C_0 \epsilon_0$$ throughout the lifespan of the solution. 
		The continuation criterion in Lemma \ref{lem:LWP} then extends the solution globally, with $$\mathcal B_T(\kappa)\lesssim\epsilon_0$$ for every $T \ge 1$.
		Interpolating the $L^{\infty}_x L^2_y$ bound with mass conservation gives the dispersive estimates, with the bounds on $0\le t\le1$ supplied by the local theory. 
        The estimate \eqref{eq:T-Hsigma-local} makes the nonlinearity locally Lipschitz in $H^{\sigma}_xL^2_y$. Uniqueness in $C_t H^{\sigma}_x L^2_y$ follows from Duhamel's formula and Gr\"onwall's inequality.

        Combining Remark~\ref{rmk:lambda-C1} and  Lemma~\ref{lem:error}, we obtain
        $\widehat\lambda\in C^1([1,\infty);X)$, with
        \[
            \ii\partial_t\widehat\lambda(t)
            =t^{-1}A[\widehat\lambda(t)]\widehat\lambda(t)
                +\mathcal E[\widehat\lambda](t)
        \]
        in $X$.
        Moreover, $\sup_{t\ge1}\|\widehat\lambda(t)\|_X\lesssim\epsilon_0$
        and $\|\mathcal E[\widehat\lambda](t)\|_X
            \lesssim\epsilon_0^3t^{-1-3\delta}$ for $t\ge1$.

        Set $\widehat\omega(t)=\mathcal U_{\widehat\lambda}(1,t)\widehat\lambda(t)$.
        By Remark~\ref{rem:unitary-preserves-X}, $\widehat\omega\in C^1([1,\infty);X)$ and
        \[
            \ii\partial_t\widehat\omega(t)
            =\mathcal U_{\widehat\lambda}(1,t)\mathcal E[\widehat\lambda](t).
        \]
        Integrating this equation and using the isometry of
        $\mathcal U_{\widehat\lambda}(1,s)$ on $X$, we obtain, for $t_2\ge t_1\ge1$,
        \[
            \|\widehat\omega(t_2)-\widehat\omega(t_1)\|_X
            \le\int_{t_1}^{t_2}\|\mathcal E[\widehat\lambda](s)\|_X\,\dd s
            \lesssim \epsilon_0^3(t_1^{-3\delta}-t_2^{-3\delta}).
        \]
		Hence $\widehat\omega(t)$ has a limit $\widehat\omega_\infty\in X$, with $\|\widehat\omega_\infty\|_{X}\lesssim\epsilon_0$. We also have
		\begin{equation}\label{est:modified-01}
			\|\widehat\omega(t)-\widehat\omega_\infty\|_X
            \le C_d \epsilon_0^3 \,  t^{-3\delta}.
		\end{equation}

		We now construct a solution of the resonant equation that approaches $\widehat\lambda(t)$. Fix $\widehat\lambda$ and $\widehat\omega_\infty$.
        Consider the complete metric space \[
            \begin{aligned}
                Y:=\left\{f\in C([1,\infty);X):
                \sup_{t\ge1}t^{3\delta}\|f(t)-\Fomega_\infty\|_{X}\le  \epsilon_0 \right\}
            \end{aligned}
        \]
        with $$d_Y(f,g):=\sup_{t\ge1}t^{3\delta}\|f(t)-g(t)\|_{X}.$$

        For $f \in Y$, set $\widehat\nu_f(t):= \mathcal U_{\Flam}(t,1)f(t)$ and define
		\[
		\Phi(f)(t)=\widehat\omega_\infty+ \ii\int_t^\infty s^{-1}\mathcal U_{\widehat\lambda}(1,s) (A[\widehat\nu_f(s)]-A[\widehat\lambda(s)])\widehat\nu_f(s)\,\dd s.
		\]
		Using the isometry in Remark~\ref{rem:unitary-preserves-X} and \eqref{est:modified-01}, we estimate
        \[
            \|\widehat\nu_f(t)\|_{X}\lesssim\epsilon_0,
            \qquad \|\widehat\nu_f(t)-\widehat\lambda(t)\|_{X}
            \le( \epsilon_0 +C_d \epsilon_0^3)t^{-3\delta}.
        \]
        Hence Lemma \ref{lem:A} implies
        \[
            \|(A[\widehat\nu_f(t)]-A[\widehat\lambda(t)])\widehat\nu_f(t)\|_{X}
            \lesssim\epsilon_0^2(\epsilon_0 + C_d \epsilon_0^3 )t^{-3\delta}.
        \]
        We thus obtain
		\[
		\sup_{t\ge1}t^{3\delta}\|\Phi(f)(t)-\widehat\omega_\infty\|_X \le C_d \epsilon_0^2 ( \epsilon_0 + C_d \epsilon_0^3).
		\]
        For $f,g\in Y$, use the decomposition
        \[
        \begin{aligned}
        &(A[\widehat\nu_f]-A[\widehat\lambda])\widehat\nu_f
        -(A[\widehat\nu_g]-A[\widehat\lambda])\widehat\nu_g\\
        &\quad=(A[\widehat\nu_f]-A[\widehat\nu_g])\widehat\nu_f
        +(A[\widehat\nu_g]-A[\widehat\lambda])
        (\widehat\nu_f-\widehat\nu_g).
        \end{aligned}
        \]
        Lemma \ref{lem:A} bounds its $X$ norm by $C\epsilon_0^2\|\widehat\nu_f-\widehat\nu_g\|_{X}$.
        Applying the propagator isometry and integrating, we find
        \[
            d_Y(\Phi(f),\Phi(g))\le C_d\epsilon_0^2d_Y(f,g).
        \]

		Choose $\epsilon_0$ sufficiently small that $C_d \epsilon_0^2 \le1/4$.
		Then $\Phi$ maps $Y$ into itself and is a contraction.
		Its unique fixed point $\widehat\eta$ satisfies
		\begin{equation}\label{est:modified-02}
			\|\widehat\eta(t)-\widehat\omega_\infty\|_{X}\le \epsilon_0 t^{-3\delta}.
		\end{equation}
		The integral equation implies $\widehat\eta\in C^1([1,\infty);X)$.
		Define $\widehat\nu(t):=\widehat\nu_{\widehat\eta}(t)=\mathcal U_{\widehat\lambda}(t,1)\widehat\eta(t)$.
		Differentiating in $t$, we compute
		\[
		\ii\partial_t\widehat\eta =t^{-1}\mathcal U_{\widehat\lambda}(1,t)(A[\widehat\nu]-A[\widehat\lambda])\widehat\nu, \qquad \ii\partial_t\widehat\nu=t^{-1}A[\widehat\nu]\widehat\nu.
		\]
		In particular $\widehat\nu_1:=\widehat\nu(1)=\widehat\eta(1)\in X$, and \[
            \|\widehat\nu_1\|_{X}
            \le\|\widehat\omega_\infty\|_{X}+\epsilon_0
            \lesssim\epsilon_0.
        \]
        Uniqueness of the resonant evolution with this initial value follows from Lemma \ref{lem:resonant-GWP}. By the isometry, \eqref{est:modified-01} and \eqref{est:modified-02},
		\[
		\|\widehat\lambda(t)-\widehat\nu(t)\|_{X} \le\|\widehat\omega(t)-\widehat\omega_\infty\|_{X} +\|\widehat\eta(t)-\widehat\omega_\infty\|_{X} \lesssim\epsilon_0t^{-3\delta}.
		\]
		Set $\nu=\mathcal F^{-1}\widehat\nu$.
		By Plancherel,
		\begin{equation}\label{est:kappa-scattering}
			\|\kappa(t)-\ee^{\ii t\Delta/2}\nu(t)\|_{\mathfrak S^2} =\|\lambda(t)-\nu(t)\|_{L_{x,y}^2} =(2\pi)^{-d/2}\|\widehat\lambda(t)-\widehat\nu(t)\|_{L_{k,y}^2} \lesssim\epsilon_0t^{-3\delta}.
		\end{equation}
		This proves the half-density statements.

	\end{proof}

	\subsection{Proof of Theorem~\texorpdfstring{\ref{thm:main}}{\ref*{thm:main}}}
	\begin{proof}
		Let $\kappa_0=\sqrt{\gamma_0}$.
		Observe that
		\[
		\|\langle\nabla\rangle^\sigma\kappa_0\|_{\mathfrak S^2}^2 =\|\langle\nabla\rangle^\sigma\gamma_0 \langle\nabla\rangle^\sigma\|_{\mathfrak S^1},\qquad \|\langle x\rangle^\sigma\kappa_0\|_{\mathfrak S^2}^2 =\|\langle x\rangle^\sigma\gamma_0 \langle x\rangle^\sigma\|_{\mathfrak S^1}.
		\]
		The sum of the two half-density norms is at most $\sqrt{2\epsilon}$.
		Thus Theorem \ref{thm:main-kappa} applies with $\epsilon_0=\sqrt{2\epsilon}$ after choosing $\epsilon_*$ sufficiently small.
		Set $\gamma=\kappa\kappa^*$.
        By Theorem~\ref{thm:main-kappa} and the Schatten--H\"older inequality, $\gamma \in C([0,\infty);\mathfrak{S}^{\sigma,1})$. 
		Since $\sigma\ge2$, it also follows from the half-density equation that $\kappa\in C^1_t\mathfrak S^2$, and therefore $\gamma\in C^1_t\mathfrak S^1$.
		By \eqref{eq:gamma-solves-HF}, $\gamma$ satisfies \eqref{eq:HF}.
		Moreover, $\rho_\gamma(x)=\|\kappa(x,\cdot)\|_{L^2_y}^2$, so $\|\rho_\gamma(t)\|_{L^p_x}=\|\kappa(t)\|_{L_x^{2p}L_y^2}^2$ implies the desired decay estimate.

		We next prove uniqueness for the Cauchy problem.
        Let $J=\langle\nabla\rangle^\sigma$ and
        $B_T=V*\rho_T-\mathsf X_T$.
        For self-adjoint $T\in\mathfrak S^{\sigma,1}$, we factor $T=fg^*$
        with $f,g\in H_x^\sigma L_y^2$ and
        $\|Jf\|_{\mathfrak S^2}\|Jg\|_{\mathfrak S^2}
            =\|T\|_{\mathfrak S^{\sigma,1}}$.
        To obtain this factorization, write the polar decomposition
        $K=U|K|$ of $K=JTJ$ and take
        $f=J^{-1}U|K|^{1/2}$ and $g=J^{-1}|K|^{1/2}$.
        Applying Lemmas~\ref{lem:direct} and \ref{lem:exchange-term}
        to $f$, $g$, we get 
        \[
            \|JB_TJ^{-1}\|_{\mathrm{op}}
            \lesssim\|T\|_{\mathfrak S^{\sigma,1}}.
        \]
        Morevoer, $B_T$ is bounded and self-adjoint on $L^2$, so the same bound holds for $J^{-1}B_T J$ by taking adjoints. The Schatten--H\"older inequality and linearity of $T \mapsto B_T$ therefore imply, for self-adjoint $T,S\in \mathfrak{S}^{\sigma, 1}$,
        \[
            \|[B_T,T]-[B_S,S]\|_{\mathfrak S^{\sigma,1}}
            \lesssim
            \bigl(\|T\|_{\mathfrak S^{\sigma,1}}
                +\|S\|_{\mathfrak S^{\sigma,1}}\bigr)
            \|T-S\|_{\mathfrak S^{\sigma,1}}.
        \]

        Let $T_0>0$, and let
        $\gamma^{(1)},\gamma^{(2)}\in C([0,T_0];\mathfrak S^{\sigma,1})$
        be self-adjoint solutions with the same initial data.
        Since $\sigma \ge 2$ and the free group commutes with $J$, their Duhamel formulas hold in $\mathfrak{S}^{\sigma, 1}$.
        Applying the preceding estimate, we obtain, for $0\le t\le T_0$,
        \[
        \begin{aligned}
            \|\gamma^{(1)}(t)-\gamma^{(2)}(t)\|_{\mathfrak S^{\sigma,1}}
            &\lesssim
                \int_0^t
                \bigl(\|\gamma^{(1)}(s)\|_{\mathfrak S^{\sigma,1}}
                    +\|\gamma^{(2)}(s)\|_{\mathfrak S^{\sigma,1}}\bigr)
                \|\gamma^{(1)}(s)-\gamma^{(2)}(s)\|_{\mathfrak S^{\sigma,1}}\,\dd s.
        \end{aligned}
        \]
        By Gr\"onwall's inequality, $\gamma^{(1)}(t)=\gamma^{(2)}(t)$
        for every $t\in[0,T_0]$.
        Since $T_0$ is arbitrary, this proves uniqueness in
        $C([0,\infty);\mathfrak S^{\sigma,1})$ among self-adjoint solutions.

		Let $\widehat\nu$ be the solution of the resonant equation supplied by Theorem \ref{thm:main-kappa}, and define
		\[
		\nu=\mathcal F^{-1}\widehat\nu,\qquad \Gamma(\tau)=\nu(\ee^\tau)\nu(\ee^\tau)^*,\qquad \tau\ge0.
		\]
		Then $\Gamma(0)=\Gamma_*:=\nu(1)\nu(1)^*$.
        By \eqref{def:Fourier-operator},
		\[
		\begin{aligned}
            G(\tau)
            :=
            \mathfrak F(\Gamma(\tau))
            =
            (2\pi)^{-d}\widehat\nu(\ee^\tau)\widehat\nu(\ee^\tau)^*,\qquad
            \rho_{G(\tau)}(k)
            =
            (2\pi)^{-d}\|\widehat\nu(\ee^\tau,k,\cdot)\|_{L_y^2}^2.
        \end{aligned}
		\]
		The density $\rho_{G(\tau)}$ lies in $L^1\cap L^\infty$ and is independent of $\tau$ by Lemma \ref{lem:resonant-GWP}.
		By Lemma~\ref{lem:potential} and Cauchy--Schwarz, $\| \mathsf{X}_G \|_{\mathrm{op}} \le \| V \ast \rho_G \|_{L^{\infty}}<\infty$.
		Both $V*\rho_G$ and $\mathsf X_G$ are bounded and self-adjoint, so $[V \ast \rho_G - \mathsf{X}_G, G] \in \mathfrak{S}^1$.

		By the definition of $\mathsf{X}_G$, we rewrite \eqref{eqn:res-eqn-02} for $t \ge 1$ as
        \[
            \ii\partial_t\widehat\nu(t)
            =t^{-1}\bigl(V*\rho_{G(\log t)}-\mathsf X_{G(\log t)}\bigr)
                \widehat\nu(t).
        \]
        Here the operator on the right hand side acts on $L^2_k$. 
        Since $\widehat\nu\in C^1([1,\infty);X)$, both $G$ and $\Gamma$ belong to $C^1([0,\infty);\mathfrak S^1)$. Differentiating the definition of $G$, we get \eqref{eqn:res-eqn-gamma}.

		Finally, conservation of the $\mathfrak S^2$ norms of
		$\kappa$ and $\nu$ implies
		$\|\kappa(t)\|_{\mathfrak S^2}+\|\nu(t)\|_{\mathfrak S^2}\lesssim\sqrt\epsilon$.
		By Schatten--H\"older and \eqref{est:kappa-scattering},
		\[
		\begin{aligned}
		\|\gamma(t)-\ee^{\ii t\Delta/2}\Gamma(\log t )\ee^{-\ii t\Delta/2}\|_{\mathfrak S^1}
		&\le\|\kappa(t)-\ee^{\ii t\Delta/2}\nu(t)\|_{\mathfrak S^2}
		\bigl(\|\kappa(t)\|_{\mathfrak S^2}+\|\nu(t)\|_{\mathfrak S^2}\bigr)
		\lesssim\epsilon t^{-3\delta}.
		\end{aligned}
		\]
		Both $\operatorname{Tr}\Gamma(\log t )=\|\nu(t)\|_{\mathfrak S^2}^2$ and $\operatorname{Tr}\gamma(t)=\operatorname{Tr}\gamma_0$ are constant in time. By the preceding estimate, $\operatorname{Tr}\Gamma(\tau)=\operatorname{Tr}\gamma_0$ for every $\tau \ge 0$.
	\end{proof}

	\section{Absence of linear scattering}\label{sec:no-scattering}

    To prove Theorem~\ref{thm:no-linear-scattering}, we first classify the stationary nonnegative trace-class operators with bounded density.
    Under the assumption on the time decay of density in that theorem, linear scattering implies that $\mathfrak F(\gamma_+)$ has bounded density and is stationary.
    Proposition~\ref{prop:stationary-coulomb} then gives $\operatorname{rank}\gamma_+\le1$, contradicting conservation of the eigenvalues when $\operatorname{rank}\gamma_0\ge2$.

	\subsection{Stationary solutions of the resonant equation}

	\begin{proposition}
		\label{prop:stationary-coulomb}
		Let $d\ge2$, $0<\theta\le1$, and $V(x)=|x|^{-\theta}$. Let $G\ge0$ be a trace-class operator on $L^2(\mathbb R^d)$ such that $\rho_G \in L^{\infty}(\mathbb{R}^d)$.
		Then $V*\rho_G-\mathsf X_G$ is bounded and self-adjoint, and
		\[
		[V*\rho_G-\mathsf X_G,G]=0\quad\Longleftrightarrow\quad \operatorname{rank}G\le1.
        \]
	\end{proposition}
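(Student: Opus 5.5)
The plan is to first dispose of boundedness and the easy implication, and then prove the converse by a virial (dilation) argument combined with a strict positivity property of the Hartree--Fock interaction energy.

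\textbf{Boundedness and the easy direction.} Since $G\ge0$ is trace class, $\rho_G\ge0$ lies in $L^1$ with $\|\rho_G\|_{L^1}=\operatorname{Tr}G$, and by assumption also in $L^\infty$. The splitting argument of Lemma~\ref{lem:potential}, with $V_{<R}\in L^1$ and $V_{\ge R}\in L^\infty$, works for any $0<\theta<d$, so $V*\rho_G\in L^\infty$. To bound the exchange term, factor $G=\kappa\kappa^*$ with $\kappa=G^{1/2}$. Then $|G(x,y)|\le\rho_G(x)^{1/2}\rho_G(y)^{1/2}$, and the operator-valued bound from Section~\ref{sec:halfdensity} gives $\|\mathsf X_G\|_{\mathrm{op}}\le\|V*\rho_G\|_{L^\infty}$. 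Self-adjointness follows because $V$ is real and even and $G$ is self-adjoint.

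If $G=|u\rangle\langle u|$, a direct computation gives
\[
(\mathsf X_Gf)(x)=u(x)\bigl(V*(\bar u f)\bigr)(x).
\]
Taking $f=u$, this yields $(V*\rho_G-\mathsf X_G)u=(V*|u|^2)u-u\,(V*|u|^2)=0$. Hence $W_G:=V*\rho_G-\mathsf X_G$ annihilates $u$, so $W_GG=0$, and $GW_G=(W_GG)^*=0$. The commutator therefore vanishes.

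\textbf{Converse: virial step.} Define the interaction energy
\[
\mathcal E(G):=\tfrac12\operatorname{Tr}(W_GG)=\tfrac12\iint V(x-y)\bigl(\rho_G(x)\rho_G(y)-|G(x,y)|^2\bigr)\,\dd x\,\dd y .
\]
Let $U_sf(x)=s^{d/2}f(sx)$ and $G_s=U_sGU_s^*$. Homogeneity of $V$ gives $\mathcal E(G_s)=s^{\theta}\mathcal E(G)$. Differentiating at $s=1$ and using that $\mathcal E$ is a symmetric quadratic form with first variation $\operatorname{Tr}(W_G\,\cdot\,)$, we get formally
\[
\theta\,\mathcal E(G)=\operatorname{Tr}\bigl(W_G[\ii D,G]\bigr)=\operatorname{Tr}\bigl(\ii D[G,W_G]\bigr)=0,
\]
where $D=\tfrac12(x\cdot\nabla+\nabla\cdot x)$.

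Making this rigorous is the main obstacle. $G$ has no assumed regularity, $D$ is unbounded, and cyclicity of the trace is not available. My first attempt is a localized virial identity: replace $x$ by $x\chi(x/R)$ with a compactly supported cutoff and smooth $G$ by $G_\varepsilon=\phi_\varepsilon(-\ii\nabla)G\,\phi_\varepsilon(-\ii\nabla)$. Alternatively, and more cleanly, I would avoid $D$ altogether by working with finite differences in $s$. Since $[W_G,G]=0$, $W_G$ commutes with the spectral projections of $G$, so one can write $G=\sum_j a_j|\varphi_j\rangle\langle\varphi_j|$ with each eigenspace invariant under $W_G$. Then the first variation $\operatorname{Tr}(W_G(G_s-G))$ can be expressed through $\langle\varphi_j,W_G U_s\varphi_j\rangle$-type quantities, which have vanishing derivative at $s=1$ by symmetry. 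The remainder $\mathcal E(G_s-G)$ is $O(\|G_s-G\|^2)$ in a norm controlled by the bounds of the first step, for instance $\|\cdot\|_{\mathfrak S^1}$ together with the $L^\infty$ norm of the density. One must check that $s\mapsto G_s$ is differentiable in a weak enough sense and that the quadratic term is $o(s-1)$; this is the delicate point, and I would use the cutoffs to obtain differentiability. Once this is done, $\theta>0$ gives $\mathcal E(G)=0$.

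\textbf{Converse: positivity step.} Writing $u(x)=\kappa(x,\cdot)\in L^2_y$, we have $\rho_G(x)=\|u(x)\|^2$ and $G(x,y)=\langle u(y),u(x)\rangle$. Cauchy--Schwarz gives $\rho_G(x)\rho_G(y)-|G(x,y)|^2\ge0$, with equality exactly when $u(x)$ and $u(y)$ are parallel. Since $V>0$ everywhere, $\mathcal E(G)=0$ forces equality for almost every pair $(x,y)$. Hence all the vectors $u(x)$ lie a.e. in a common one-dimensional subspace, so $\operatorname{rank}\kappa\le1$ and therefore $\operatorname{rank}G\le1$.
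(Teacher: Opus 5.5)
Your boundedness argument, the rank-one direction and the positivity step are correct. The positivity step (equality in Cauchy--Schwarz for $u(x)=\kappa(x,\cdot)$) is a valid alternative to the paper's identity $\iint W=(\operatorname{Tr}G)^2-\operatorname{Tr}G^2=\sum_{j\ne k}\lambda_j\lambda_k$, where $W(x,y):=\rho_G(x)\rho_G(y)-|G(x,y)|^2$. The gap is the virial step, which carries the entire converse. Neither of your proposed justifications works, because the hypotheses give no regularity of $G$ at all.

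In the finite-difference version one computes exactly $\operatorname{Tr}(W_G(G_s-G))=\sum_j a_j\langle h_j,(W_G-\mu_j)h_j\rangle$ with $h_j=U_s\varphi_j-\varphi_j$. So this term is $o(|s-1|)$ only if $\sum_j a_j\|U_s\varphi_j-\varphi_j\|^2=o(|s-1|)$. Likewise, $\mathcal E(G_s-G)=o(|s-1|)$ requires a rate for $\|G_s-G\|_{\mathfrak S^1}$ that strong continuity of $s\mapsto G_s$ does not give. For general $L^2$ eigenfunctions, $\|U_s\varphi-\varphi\|^2$ can be of order $|\log s|$ or larger. This is a regularity condition with respect to the generator of $U_s$: it holds on its form domain, but not on all of $L^2$. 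So both pieces can be comparable to, or larger than, $\theta(s-1)\mathcal E(G)$, and the expansion yields nothing.

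Your other suggestions fail for similar reasons:
\begin{itemize}
\item Replacing $x$ by $x\chi(x/R)$ removes only the growth at infinity, not the derivative in $D$.
\item Smoothing to $G_\varepsilon=\phi_\varepsilon(-\ii\nabla)G\,\phi_\varepsilon(-\ii\nabla)$ destroys $[W_{G_\varepsilon},G_\varepsilon]=0$. The resulting error would have to beat the growth of the localized generator composed with $\phi_\varepsilon(-\ii\nabla)$ as $\varepsilon\to0$, which again needs regularity.
\item You cannot move $D$ onto $W_G$ by cyclicity, since $[D,\mathsf X_G]$ differentiates the kernel $V(x-y)G(x,y)$, which is no smoother than $G$.
\end{itemize}

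The missing idea, which is how the paper proceeds, is to rewrite stationarity as a pointwise vector-valued identity in which the exchange term becomes a convolution.
\begin{itemize}
\item Diagonalize simultaneously: $G=\sum_j\lambda_j|\phi_j\rangle\langle\phi_j|$ with $W_G\phi_j=\mu_j\phi_j$. Set $u(x)=(\sqrt{\lambda_j}\phi_j(x))_j\in\ell^2(J)$, $R=|u\rangle\langle u|$ and $T(x)=\int V(x-y)R(y)\,\dd y$.
\item The eigenvalue equations then become $C(x)u(x)=0$ a.e., where $C=(V*\rho_G)I-T-\operatorname{diag}(\mu_j)$. This $C$ is self-adjoint and, being built from convolutions with $V$, has weak derivatives with $\|\partial_jC\|_{\mathrm{op}}\in L^2_{\mathrm{loc}}$.
\item The difference-quotient identity $\langle u(x+he_j),(D_hC)(x)u(x)\rangle=0$ gives the force balance $\rho_G\,\partial_j(V*\rho_G)=\operatorname{Tr}(R\,\partial_jT)$ a.e., without ever differentiating $u$.
\item Test against $b\in C^1_c$ and symmetrize. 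This uses $V_\varepsilon=(|z|^2+\varepsilon^2)^{-\theta/2}$ to justify Fubini, plus a separate $L^2$/Fourier argument for the derivatives when $d=2$, $\theta=1$, where $\nabla V\notin L^1_{\mathrm{loc}}$. The result is $\iint(b(x)-b(y))\cdot\nabla V(x-y)W(x,y)\,\dd x\,\dd y=0$.
\item Letting $b_L(x)=x\chi(x/L)\to x$ and using $z\cdot\nabla V=-\theta V$ gives $\theta\iint VW=2\theta\mathcal E(G)=0$.
\end{itemize}
This is a rigorous, localized form of your formal identity $\theta\mathcal E(G)=\operatorname{Tr}(\ii D[G,W_G])=0$.
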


	\begin{proof}
		The case $G=0$ is immediate. Write $\rho =\rho_G$. Since $\rho \in L^1 \cap L^{\infty}$, we argue as in the proof of Lemma~\ref{lem:potential} to obtain
        \[
            \| V \ast \rho \|_{L^{\infty}} \lesssim \| \rho\|_{L^1}^{1-\theta/d} \| \rho \|_{L^{\infty}}^{\theta/d}.
        \]
        Since $G$ is nonnegative, we have $|G(x,y)|^2 \le \rho(x) \rho(y)$. Thus, for $f \in L^2$,
        \begin{equation*}
        \begin{aligned}
            | \langle f , \mathsf{X}_G f \rangle |
            \le
            \iint V(x-y) \sqrt{\rho(x)\rho(y)}  \, |f(x) f(y)| \, \dd x \, \dd y 
            \le
            \int (V \ast \rho)(x) |f(x)|^2 \, \dd x,
        \end{aligned}
        \end{equation*}
        where we used $2\sqrt{\rho(x)\rho(y)}|f(x)f(y)| \le\rho(y)|f(x)|^2+\rho(x)|f(y)|^2$. 
        Thus $\mathsf{X}_G$ is bounded and self-adjoint, with $\|\mathsf X_G\|_{\mathrm{op}}\le\|V*\rho\|_\infty <\infty$.
		Define
		\[
		W(x,y)=\rho(x)\rho(y)-|G(x,y)|^2\ge0,\qquad Q=\iint V(x-y)W(x,y)\,\dd x\,\dd y.
		\]
        Then $ Q \le \|\rho\|_{L^1}\|V\ast\rho\|_{L^{\infty}} <\infty$, and
        \[
            \iint W(x,y) \, \dd x \, \dd y
            =
            (\operatorname{Tr} G)^2 - \operatorname{Tr}(G^2)
            =
            \sum_{j\neq k} \lambda_j \lambda_k,
        \]
        where $\lambda_j$ are the positive eigenvalues of $G$.
        Since $V>0$ almost everywhere and $W\ge0$, we have $Q=0$ if and only if $\operatorname{rank}G\le1$.

		Assume $[V*\rho-\mathsf X_G,G]=0$.
		Each eigenspace of $G$ corresponding to a positive eigenvalue is finite dimensional and invariant under $V*\rho-\mathsf X_G$.
		Thus we may choose a countable orthonormal family $(\phi_j)_{j \in J}$ such that
        \[
            G = \sum_{j\in J} \lambda_j \lvert \phi_j \rangle \langle \phi_j \rvert,
            \qquad
            \bigl(V*\rho-\mathsf X_G\bigr)\phi_j=\mu_j\phi_j,
            \qquad
            \lambda_j >0,
        \]
        where $\mu_j \in \mathbb{R}$ and $|\mu_j|\le\|V*\rho-\mathsf X_G\|_{\mathrm{op}}$.
		Let $\mathcal H=\ell^2(J)$ and set
		\[
		\begin{aligned}
			u(x)&=(\sqrt{\lambda_j}\phi_j(x))_{j \in J}, & R(x)&=|u(x)\rangle\langle u(x)|,\\
			T(x)&=\int V(x-y)R(y)\,\dd y, & M&=\operatorname{diag}(\mu_j).
		\end{aligned}
		\]
        For almost every $x$, we have
        \[
            \|u(x)\|_{\mathcal H}^2
            =\|R(x)\|_{\mathfrak S^1(\mathcal H)}
            =\|R(x)\|_{\mathfrak S^2(\mathcal H)}
            =\rho(x).
        \]
        In particular, $u\in L_x^2(\mathcal H)\cap L_x^\infty(\mathcal H)$, and $\|T(x)\|_{\mathfrak S^1(\mathcal H)}\le(V*\rho)(x)$.
        Moreover, for almost every $x$, $(T(x)u(x))_j=\sqrt{\lambda_j}(\mathsf X_G\phi_j)(x)$.
        Consequently, $C(x):=(V*\rho)(x)I-T(x)-M$ is bounded and self-adjoint for almost every $x$. By the eigenvalue equations, for almost every $x$,
        \begin{equation}\label{eq:stationary-eigval-eqn}
            C(x)u(x)=0.
        \end{equation}

		We next establish the weak derivatives needed to use this identity. For $\varepsilon>0$, set $V_\varepsilon(z)=(|z|^2+\varepsilon^2)^{-\theta/2}$, and $T_\varepsilon=V_\varepsilon*R$.
        If $d>\theta+1$, then $\nabla V$ is locally integrable and
        \[
            \int|x-y|^{-\theta-1}\rho(y)\,\dd y
            \lesssim \|\rho\|_\infty+\|\rho\|_1.
        \]
        Since $\|R(y)\|_{\mathfrak S^2(\mathcal H)}=\rho(y)$, the same bound justifies differentiation of both convolutions. Dominated convergence then proves \eqref{eqn:derivative-convergence} in this case.

        For $d=2$ and $\theta=1$, we instead use
        \[
            \widehat{\partial_jV_\varepsilon}(\xi)
            =2\pi\ii\frac{\xi_j}{|\xi|}\ee^{-\varepsilon|\xi|}.
        \]
        Since $\rho\in L^2$ and $R\in L_x^2(\mathfrak S^2(\mathcal H))$, the regularized derivatives converge in the corresponding $L^2$ spaces by Plancherel and dominated convergence.
        Also, $\|V-V_\varepsilon\|_{L^1(\mathbb R^2)}=2\pi\varepsilon$, so the potentials converge uniformly. Passing to the limit in the distributions identifies the $L^2$ limits as their weak derivatives.

        Thus, in both cases, $V \ast \rho \in W^{1,2}_{\mathrm{loc}}(\mathbb{R}^d)$ and $T \in W^{1,2}_{\mathrm{loc}}(\mathbb{R}^d;\mathfrak{S}^2(\mathcal{H}))$, and the regularized derivatives satisfy
        \begin{equation}\label{eqn:derivative-convergence}
        \begin{aligned}
            \partial_j(V_\varepsilon*\rho)&\rightarrow\partial_j(V*\rho)
            &&\text{in }L^2_{\mathrm{loc}},\\
            \partial_jT_\varepsilon&\rightarrow\partial_jT
            &&\text{in }L^2_{\mathrm{loc}}(\mathfrak S^2(\mathcal H)).
        \end{aligned}
        \end{equation}
        Since $C=(V*\rho)I-T-M$, the scalar and Hilbert--Schmidt derivative bounds imply $\|\partial_jC(\cdot)\|_{\mathrm{op}}\in L^2_{\mathrm{loc}}$.

        Fix $j\in\{1,\ldots,d\}$ and write
        \[
            u_h(x)=u(x+he_j),
            \qquad D_hC(x)=\frac{C(x+he_j)-C(x)}h,
            \qquad h\ne0.
        \]
        For each fixed $h\ne0$, self-adjointness and \eqref{eq:stationary-eigval-eqn} yield
        \[
        \begin{aligned}
            \langle u_h,(D_hC)u\rangle_{\mathcal H}
            &=h^{-1}\bigl(\langle C(x+he_j)u_h,u\rangle_{\mathcal H}
            -\langle u_h,C(x)u\rangle_{\mathcal H}\bigr)
            =0
        \end{aligned}
        \]
        for almost every $x$.
        Since $u$ and $u_h$ are uniformly bounded, Cauchy--Schwarz implies
        \[
            \langle u_h, (D_h C)u \rangle_{\mathcal{H}}
            \to
            \langle u , (\partial_j C)u \rangle_{\mathcal{H}}
        \]
        in $L^1_{\mathrm{loc}}$.
        Hence, for almost every $x$,
        \begin{equation*}
        \begin{aligned}
            F_j&:=\rho\,\partial_j(V*\rho)-\operatorname{Tr}_{\mathcal H}(R\,\partial_jT)
            =\langle u,(\partial_jC)u\rangle_{\mathcal H}=0.
        \end{aligned}
        \end{equation*}

        Let $b\in C_c^1(\mathbb R^d;\mathbb R^d)$ and define $F_\varepsilon$ by its components
        \[
            F_{\varepsilon,j}
            =\rho\,\partial_j(V_\varepsilon*\rho)
            -\operatorname{Tr}_{\mathcal H}(R\,\partial_jT_\varepsilon).
        \]
        By \eqref{eqn:derivative-convergence} and boundedness of $\rho$ and $R$, we have $F_\varepsilon\rightarrow F=0$ in $L^1_{\mathrm{loc}}$.
        Since $\nabla V_\varepsilon$ is bounded and $\operatorname{Tr}_{\mathcal H}(R(x)R(y))=|G(x,y)|^2$ for almost every $(x,y)$, we symmetrize in $x$ and $y$ to obtain
        \begin{equation}\label{eqn:Qvanishes}
            \int b(x)\cdot F_\varepsilon(x)\,\dd x
            =\frac12\iint(b(x)-b(y))\cdot\nabla V_\varepsilon(x-y)W(x,y)\,\dd x\,\dd y.
        \end{equation}
        The left-hand side tends to zero. On the right-hand side,
        \[
            |(b(x)-b(y))\cdot\nabla V_\varepsilon(x-y)|
            \le\theta\|\nabla b\|_\infty V(x-y).
        \]
        Since $V(x-y)W(x,y)$ is integrable, dominated convergence in \eqref{eqn:Qvanishes} yields
        \[
            0=\frac12\iint(b(x)-b(y))\cdot\nabla V(x-y)W(x,y)\,\dd x\,\dd y.
        \]
        Take $b_L(x)=x\chi(x/L)$, where $\chi\in C_c^\infty(\mathbb R^d)$ equals one near the origin. These vector fields converge pointwise to $x$ and have uniformly bounded gradients.
        Applying dominated convergence and using $z\cdot\nabla V(z)=-\theta V(z)$, we obtain
        \[
            0=\frac12\iint(x-y)\cdot\nabla V(x-y)W(x,y)\,\dd x\,\dd y
            =-\frac\theta2Q.
        \]
        Thus $Q=0$, and $\operatorname{rank}G\le1$.

        Conversely, if $\operatorname{rank}G=1$, write $G=\lambda|\phi\rangle\langle\phi|$, with $\lambda>0$ and $\|\phi\|_2=1$. Then $\mathsf X_G\phi=(V*\rho_G)\phi$, and hence
        \[
            (V*\rho_G-\mathsf X_G)G=G(V*\rho_G-\mathsf X_G)=0.
        \]
	\end{proof}

\subsection{Proof of Theorem~\texorpdfstring{\ref{thm:no-linear-scattering}}{\ref*{thm:no-linear-scattering}}}
\begin{proof}
For $t > 0$, define
\[
 (M_tf)(x)=\ee^{\ii|x|^2/(2t)}f(x),
 \qquad
 (D_tf)(x)=(\ii t)^{-d/2}f(x/t).
\]
Assume, for contradiction, that $\gamma_+\in\mathfrak S^1$ satisfies
\eqref{eq:LS}, and set $\Lambda(t):=\ee^{-\ii t\Delta/2}\gamma(t)\ee^{\ii t\Delta/2}$. Then
\begin{equation}
 \|\Lambda(t)-\gamma_+\|_{\mathfrak S^1}
 =\|\gamma(t)-\ee^{\ii t\Delta/2}\gamma_+\ee^{-\ii t\Delta/2}\|_{\mathfrak S^1}\to 0.
 \label{equiv:LS}
\end{equation}
Since $\gamma(t) \ge 0$, we also have $\gamma_+ \ge 0$. The mild formulation gives
\begin{equation}
\begin{aligned}
 \Lambda(t)-\Lambda(s)
 &= - \ii \int_s^t \ee^{-\ii\tau\Delta/2}
   [V*\rho_{\gamma(\tau)}-\mathsf X_{\gamma(\tau)},\gamma(\tau)]
   \ee^{\ii\tau\Delta/2}\, \dd\tau,
 \qquad 0\le s\le t.
 \end{aligned}
 \label{eq:profile-duhamel}
\end{equation}
From the factorization $\ee^{\ii t \Delta/2} = M_t D_t \mathcal{F} M_t$, we  may write
\begin{equation}
\begin{aligned}
\Upsilon(t)
:={}&\mathfrak F\bigl(M_t\Lambda(t)M_t^{-1}\bigr)
=D_t^{-1}M_t^{-1}\gamma(t)M_tD_t,\\
\Upsilon_\infty
:={}&\mathfrak F(\gamma_+),\\
\gamma(t)
={}&M_tD_t\Upsilon(t)D_t^{-1}M_t^{-1}.
\end{aligned}
\label{eq:rescaled-state}
\end{equation}
Since $M_t$ is unitary and $M_t \to I$ strongly on $L^2$, \eqref{equiv:LS} implies
\begin{equation}
\begin{aligned}
\|\Upsilon(t)-\Upsilon_\infty\|_{\mathfrak S^1}
&\le 
\|\Lambda(t)-\gamma_+\|_{\mathfrak S^1}
+\|M_t\gamma_+M_t^{-1}-\gamma_+\|_{\mathfrak S^1}
\rightarrow0.
\end{aligned}
\label{eq:Upsilon-convergence}
\end{equation}
Moreover, \eqref{eq:rescaled-state} and
\eqref{eq:optimal-density-decay} give
\begin{equation}
\rho_{\Upsilon(t)}(v)=t^d\rho_{\gamma(t)}(tv),
\qquad
\sup_{t\ge1}\|\rho_{\Upsilon(t)}\|_{L^\infty}\le C.
\label{eq:rescaled-density}
\end{equation}
We also have
$\|\rho_{\Upsilon(t)}-\rho_{\Upsilon_\infty}\|_{L^1}
\le\|\Upsilon(t)-\Upsilon_\infty\|_{\mathfrak S^1}\rightarrow0$.
It follows that
\begin{equation}
\rho_{\Upsilon_\infty}\in L^\infty,
\qquad
\|\rho_{\Upsilon_\infty}\|_{L^\infty}\le C.
\label{eq:limiting-density}
\end{equation}
We claim that, if
$A,B\ge0$ belong to $\mathfrak S^1$ and
$\|\rho_A\|_{L^\infty},\|\rho_B\|_{L^\infty}\le M$, then, for every $R>0$,
\begin{equation}
 \|V*(\rho_A-\rho_B)-\mathsf X_A+\mathsf X_B\|_{\mathrm{op}}
 \lesssim MR^{d-1}+R^{-1}\|A-B\|_{\mathfrak S^1}.
 \label{eq:field-continuity}
\end{equation}
As in Lemma~\ref{lem:potential}, write $V=V_{<R}+V_{\ge R}$.
For the direct term, Young's inequality implies
\[
\begin{aligned}
 \|V_{<R}*(\rho_A-\rho_B)\|_{L^\infty}
 &\le \|V_{<R}\|_{L^1}\|\rho_A-\rho_B\|_{L^\infty}
 \lesssim MR^{d-1},\\
 \|V_{\ge R}*(\rho_A-\rho_B)\|_{L^\infty}
 &\le \|V_{\ge R}\|_{L^\infty}\|\rho_A-\rho_B\|_{L^1}
 \le R^{-1}\|A-B\|_{\mathfrak S^1}.
\end{aligned}
\]
For the exchange term, let $\mathsf X_D^{<R}$ and
$\mathsf X_D^{\ge R}$ denote the operators with kernels
$V_{<R}(x-y)D(x,y)$ and $V_{\ge R}(x-y)D(x,y)$, respectively.
As in the proof of Proposition~\ref{prop:stationary-coulomb}, we have
$\|\mathsf X_A^{<R}\|_{\mathrm{op}}\le\|V_{<R}*\rho_A\|_{L^\infty}$.
The same bound holds for $B$, so
\[
 \|\mathsf X_{A-B}^{<R}\|_{\mathrm{op}}
 \le \|V_{<R}*\rho_A\|_{L^\infty}
      +\|V_{<R}*\rho_B\|_{L^\infty}
 \lesssim MR^{d-1}.
\]
For the other part, we estimate the Hilbert--Schmidt norm using
$\|V_{\ge R}\|_{L^\infty}=R^{-1}$:
\[
 \|\mathsf X_{A-B}^{\ge R}\|_{\mathrm{op}}
 \le\|\mathsf X_{A-B}^{\ge R}\|_{\mathfrak S^2}
 \le R^{-1}\|A-B\|_{\mathfrak S^2}
 \le R^{-1}\|A-B\|_{\mathfrak S^1}.
\]
Combining these estimates, we obtain \eqref{eq:field-continuity}.

It follows from \eqref{eq:field-continuity} and \eqref{eq:Upsilon-convergence} that
\begin{equation}
 \left\|
 V*(\rho_{\Upsilon(t)}-\rho_{\Upsilon_\infty})
 -\mathsf X_{\Upsilon(t)}+\mathsf X_{\Upsilon_\infty}
 \right\|_{\mathrm{op}}\to0.
 \label{eq:field-limit}
\end{equation}
By \eqref{eq:Upsilon-convergence} and \eqref{eq:field-limit},
\begin{equation}
 \begin{aligned}
 &\Bigl\|[V*\rho_{\Upsilon(t)}-\mathsf X_{\Upsilon(t)},\Upsilon(t)]
       -[V*\rho_{\Upsilon_\infty}-\mathsf X_{\Upsilon_\infty},
          \Upsilon_\infty]\Bigr\|_{\mathfrak S^1}\rightarrow0.
 \end{aligned}
 \label{eq:commutator-limit}
\end{equation}

Using the definitions of $M_t$, $D_t$, and $\Upsilon(t)$, together with
$V(tz)=t^{-1}V(z)$, we obtain
\[
 D_t^{-1} M_t^{-1}
 \bigl(V*\rho_{\gamma(t)}-\mathsf X_{\gamma(t)}\bigr)M_tD_t
 =t^{-1}\bigl(V*\rho_{\Upsilon(t)}-\mathsf X_{\Upsilon(t)}\bigr).
\]
Hence
\begin{equation}
 \begin{aligned}
 &t\ee^{-\ii t\Delta/2}
 [V*\rho_{\gamma(t)}-\mathsf X_{\gamma(t)},\gamma(t)]
 \ee^{\ii t\Delta/2}
 =M_t^{-1} \mathcal F^{-1}
 [V*\rho_{\Upsilon(t)}-\mathsf X_{\Upsilon(t)},\Upsilon(t)]
 \mathcal F M_t.
 \end{aligned}
 \label{eq:exact-commutator}
\end{equation}
Using \eqref{eq:profile-duhamel} and \eqref{eq:exact-commutator}, we obtain
\begin{equation}
 \ii\partial_t\Lambda(t)
 =\frac1t \mathcal F^{-1}
 [V*\rho_{\Upsilon_\infty}-\mathsf X_{\Upsilon_\infty},\Upsilon_\infty]
 \mathcal F+\frac1t \mathcal E(t).
 \label{eq:profile-resonant}
\end{equation}

By \eqref{eq:commutator-limit} and the strong convergence $M_t\rightarrow I$ on $L^2$, the remainder satisfies
$\|\mathcal E(t)\|_{\mathfrak S^1}\rightarrow0$.
Integrating \eqref{eq:profile-resonant} over $[T,2T]$, for $T\ge1$, we obtain
\begin{equation}
\begin{aligned}
\Lambda(2T)-\Lambda(T)
&=-\ii (\log2)\mathcal F^{-1}
[V*\rho_{\Upsilon_\infty}-\mathsf X_{\Upsilon_\infty},\Upsilon_\infty]
\mathcal F
-\ii \int_T^{2T}\mathcal E(s)\,\frac{ \dd s}{s}.
\end{aligned}
\label{eq:dyadic-identity}
\end{equation}
The error satisfies
\[
 \left\|\int_T^{2T}\mathcal E(s)\,\frac{\dd s}{s}\right\|_{\mathfrak S^1}
 \lesssim \sup_{s\ge T}\|\mathcal E(s)\|_{\mathfrak S^1}
 \rightarrow0.
\]
Letting $T\rightarrow\infty$ in \eqref{eq:dyadic-identity} and using \eqref{equiv:LS}, we obtain
\begin{equation}
 [V*\rho_{\Upsilon_\infty}-\mathsf X_{\Upsilon_\infty},\Upsilon_\infty]=0.
 \label{eq:asymptotic-stationarity}
\end{equation}
By \eqref{eq:asymptotic-stationarity}, \eqref{eq:limiting-density} and Proposition~\ref{prop:stationary-coulomb},
\begin{equation}
 \operatorname{rank}\gamma_+
 =\operatorname{rank}\Upsilon_\infty\le1.
 \label{eq:rank-obstruction}
\end{equation}

Let $\lambda_2(A)$ denote the second largest eigenvalue of a positive
trace-class operator $A$, counted with multiplicity. We set $\lambda_2(A)=0$ when $\operatorname{rank} A\le1$. By conservation of the spectrum,
$\lambda_2(\Lambda(t))
=\lambda_2(\gamma(t))
=\lambda_2(\gamma_0)>0$
for $t\ge0$.
The min--max principle and \eqref{equiv:LS} give
\[
 |\lambda_2(\Lambda(t))-\lambda_2(\gamma_+)|
 \le\|\Lambda(t)-\gamma_+\|_{\mathrm{op}}
 \le\|\Lambda(t)-\gamma_+\|_{\mathfrak S^1}
 \rightarrow0.
\]
Consequently $\lambda_2(\gamma_+)=\lambda_2(\gamma_0)>0$, contradicting
\eqref{eq:rank-obstruction}.
\end{proof}    

\begin{remark}[Extension to $0<\theta<1$]
\label{rem:power-extension-proof}
Let $\gamma$ satisfy the hypotheses of Remark~\ref{rem:power-extension-intro}.
By Proposition~\ref{prop:stationary-coulomb}, for $A\ge0$ in $\mathfrak S^1$
with $\rho_A\in L^\infty$, we have
\begin{equation}
 [V_\theta*\rho_A-\mathsf X_A^{(\theta)},A]=0
 \quad\Longleftrightarrow\quad
 \operatorname{rank} A\le1.
 \label{eq:rigidity-theta}
\end{equation}
Assume, for contradiction, that $\gamma_+\in\mathfrak S^1$ satisfies
\eqref{eq:LS}, and define $\Lambda(t)$, $\Upsilon(t)$, and $\Upsilon_\infty$
as in the preceding proof. Then
\eqref{eq:Upsilon-convergence}--\eqref{eq:limiting-density} remain valid,
and $\gamma_+,\Upsilon_\infty\ge0$.

If $A,B\ge0$ belong to $\mathfrak S^1$ and
$\|\rho_A\|_{L^\infty},\|\rho_B\|_{L^\infty}\le M$, then, for every $R>0$,
\begin{equation}
 \|V_\theta*(\rho_A-\rho_B)
       -\mathsf X_A^{(\theta)}+\mathsf X_B^{(\theta)}\|_{\mathrm{op}}
 \lesssim MR^{d-\theta}
       +R^{-\theta}\|A-B\|_{\mathfrak S^1}.
 \label{eq:field-continuity-theta}
\end{equation}
Indeed,
\[
 \|V_\theta\mathbf1_{\{|x|<R\}}\|_{L^1}
 \lesssim R^{d-\theta},
 \qquad
 \|V_\theta\mathbf1_{\{|x|\ge R\}}\|_{L^\infty}=R^{-\theta},
\]
so the proof of \eqref{eq:field-continuity} applies with
$V_\theta$ in place of $V$.
Estimate \eqref{eq:field-continuity-theta} then implies the analogues of
\eqref{eq:field-limit} and \eqref{eq:commutator-limit}, as in the preceding proof.

Using the definitions of $M_t$, $D_t$, and $\Upsilon(t)$, together with
$V_\theta(tz)=t^{-\theta}V_\theta(z)$, we obtain
\[
 D_t^{-1}M_t^{-1}
 \bigl(V_\theta*\rho_{\gamma(t)}
       -\mathsf X_{\gamma(t)}^{(\theta)}\bigr)M_tD_t
 =t^{-\theta}
 \bigl(V_\theta*\rho_{\Upsilon(t)}
       -\mathsf X_{\Upsilon(t)}^{(\theta)}\bigr).
\]
Hence
\begin{equation*}
 t^\theta\ee^{-\ii t\Delta/2}
 [V_\theta*\rho_{\gamma(t)}-\mathsf X_{\gamma(t)}^{(\theta)},\gamma(t)]
 \ee^{\ii t\Delta/2}
 =M_t^{-1}\mathcal F^{-1}
 [V_\theta*\rho_{\Upsilon(t)}-\mathsf X_{\Upsilon(t)}^{(\theta)},
  \Upsilon(t)]\mathcal F M_t.
\end{equation*}
The mild formulation of \eqref{eq:HF-theta} yields, for almost every $t \ge 1$,
\begin{equation}
 \ii\partial_t\Lambda(t)
 =t^{-\theta}\mathcal F^{-1}
 [V_\theta*\rho_{\Upsilon_\infty}
       -\mathsf X_{\Upsilon_\infty}^{(\theta)},\Upsilon_\infty]\mathcal F
 +t^{-\theta}\mathcal E_\theta(t),
 \label{eq:profile-resonant-theta}
\end{equation}
where $\|\mathcal E_\theta(t)\|_{\mathfrak S^1}\to0$,
by the same argument as in the preceding proof.

For $T\ge1$, set $J_{\theta}(T) := \int_T^{2T} s^{-\theta} \, \dd s$.
Integrating \eqref{eq:profile-resonant-theta} over $[T,2T]$, we obtain
\begin{equation}
    \Lambda(2T) - \Lambda(T)
    =
    -\ii J_{\theta}(T) \mathcal{F}^{-1} [V_{\theta} \ast \rho_{\Upsilon_{\infty}} - \mathsf{X}_{\Upsilon_{\infty}}^{(\theta)} , \Upsilon_{\infty} ] \mathcal{F}
    - \ii \int_T^{2T} s^{-\theta} \mathcal{E}_{\theta}(s) \, \dd s.
\label{eq:normalized-dyadic-theta}
\end{equation}
Since $T^{1-\theta}\lesssim J_\theta(T)$, \eqref{equiv:LS} and
\eqref{eq:normalized-dyadic-theta} imply
\[
 \left\|\mathcal F^{-1}
 [V_\theta*\rho_{\Upsilon_\infty} -\mathsf X_{\Upsilon_\infty}^{(\theta)},\Upsilon_\infty]\mathcal F
 \right\|_{\mathfrak S^1}
 \le 
 J_{\theta}(T)^{-1} \|\Lambda(2T)-\Lambda(T)\|_{\mathfrak S^1}
 +\sup_{s\ge T}\|\mathcal E_\theta(s)\|_{\mathfrak S^1}
 \to0.
\]
We thus obtain
\[
 [V_\theta*\rho_{\Upsilon_\infty}
       -\mathsf X_{\Upsilon_\infty}^{(\theta)},\Upsilon_\infty]=0.
\]
By \eqref{eq:rigidity-theta},
$\operatorname{rank}\gamma_+=\operatorname{rank}\Upsilon_\infty\le1$.
Conservation of the spectrum under \eqref{eq:HF-theta}, the min--max principle,
and \eqref{equiv:LS} imply
$\lambda_2(\gamma_+)=\lambda_2(\gamma_0)>0$,
which is a contradiction.
\end{remark}

\appendix
\section{Subcritical Hartree--Fock Equations}\label{app:Subcritical HFE}
In this appendix, we consider \eqref{eq:HF} with $V(x) = |x|^{-\theta}$, where $d\ge 2$ and $1 < \theta <d$.

\begin{theorem}\label{sr:thm:main}
Let $d\ge2$, $1<\theta<d$, and $\sigma=\lfloor d/2\rfloor+1$.
There exists $\epsilon_*=\epsilon_*(d,\theta)>0$ such that, if
$\gamma_0\ge0$ and
\[
    \epsilon:=\|\langle\nabla\rangle^\sigma\gamma_0
        \langle\nabla\rangle^\sigma\|_{\mathfrak S^1}
    +\|\langle x\rangle^\sigma\gamma_0
        \langle x\rangle^\sigma\|_{\mathfrak S^1}
    \le\epsilon_*,
\]
then the Cauchy problem \eqref{eq:HF} with $\gamma(0)=\gamma_0$
has a unique nonnegative global solution
$\gamma\in C([0,\infty);\mathfrak S^{\sigma,1})$, and satisfies $|\mathrm J|^\sigma\gamma|\mathrm J|^\sigma \in C([0,\infty);\mathfrak S^1)$.
Moreover, for $1\le p\le \infty$ and $t \ge 0$, 
\begin{equation}\label{sr:eq:density-decay}
    \|\rho_\gamma(t)\|_{L^p_x}
    \lesssim\epsilon\langle t\rangle^{-d(1-1/p)},
\end{equation}
In addition, there exists a unique nonnegative $\gamma_{\infty}\in\mathfrak S^{\sigma,1}$,
with $\langle x\rangle^\sigma\gamma_{\infty}\langle x\rangle^\sigma\in\mathfrak S^1$,
such that
\begin{equation}\label{sr:eq:density-scattering}
    \|\gamma(t)-\ee^{\ii t\Delta/2}\gamma_{\infty}\ee^{-\ii t\Delta/2}\|_{\mathfrak S^{\sigma,1}}
    \lesssim\epsilon^2 \, t^{1-\theta}
\end{equation}
for $t \ge 1$.
\end{theorem}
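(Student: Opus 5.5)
We follow the half-density approach of the Coulomb case, with the resonant analysis replaced by direct time-integrability. We set $\kappa_0=\sqrt{\gamma_0}$, so that $\|\langle\nabla\rangle^\sigma\kappa_0\|_{\mathfrak S^2}+\|\langle x\rangle^\sigma\kappa_0\|_{\mathfrak S^2}\lesssim\sqrt\epsilon=:\epsilon_0$. We then solve the left-gauge equation \eqref{eqn:kappa} with $V=V_\theta$. The local theory of Lemma~\ref{lem:LWP} carries over verbatim, because the only properties of $V$ used in Lemmas~\ref{lem:direct}, \ref{lem:exchange-term} and \ref{lem:trilinear-cutoff} are the splittings $V_{<R}\in L^1$ and $V_{\ge R}\in L^\infty$. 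For $\theta<d$ these now read $\|V_{<R}\|_{L^1}\lesssim R^{d-\theta}$ and $\|V_{\ge R}\|_{L^\infty}=R^{-\theta}$.

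For global existence we run the bootstrap with the same norm \eqref{def:bootstrapnorm}, but with no $t^\delta$ growth. Repeating the proof of Lemma~\ref{lem:energy} with the cutoff at $R=t$ gives
\[
\|\mathcal T(u,u,u)\|_{\dot H^\sigma_xL^2_y}\lesssim\bigl(t^{d-\theta}\|u\|_{L^\infty_xL^2_y}^2+t^{-\theta}\|u\|_{L^2_{x,y}}^2\bigr)\|u\|_{\dot H^\sigma_xL^2_y}\lesssim t^{-\theta}\mathcal B_T^2\|u\|_{\dot H^\sigma_xL^2_y}.
\]
We apply this to both $\kappa$ and $\widetilde\kappa=\ee^{-\ii|x|^2/(2t)}\kappa$. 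Since $\theta>1$, the energy and weighted energy derivatives are integrable in time, so the Sobolev and $\mathrm J$-weighted norms stay bounded by $\epsilon_0+\mathcal B_T^3$ uniformly in $t$.

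The main obstacle is the dispersive component, and here it is simpler than in the Coulomb case. We do not need to extract a resonant term. Instead we bound the full profile derivative $\|\partial_t\widehat\lambda(t)\|_{L^\infty_kL^2_y}\le\|\ee^{-\ii t\Delta/2}\mathcal T(\kappa,\kappa,\kappa)\|_{L^1_xL^2_y}$ via Hausdorff--Young, but $L^1_x$ is not preserved by the free flow. We therefore first try the Kato--Pusateri route: use the formula \eqref{eqn:profile} with $\widehat V_\theta(\eta/t)=t^{d-\theta}\widehat V_\theta(\eta)$. This gives $\ii\partial_t\widehat\lambda=t^{-\theta}(\mathcal R_\theta+\mathcal E_\theta)$. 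Lemma~\ref{lem:A} with $V_\theta$ bounds $\mathcal R_\theta$ in $X$ by $\|\widehat\lambda\|_X^3$, provided $V_\theta\in L^1+L^\infty$, i.e.\ $\theta<d$. The remainder $\mathcal E_\theta$ is bounded by the argument of Lemma~\ref{lem:error}, using the weighted norm. Hence $\|\partial_t\widehat\lambda\|_X\lesssim t^{-\theta}\mathcal B_T^3$, which is integrable, and Lemma~\ref{lem:decay} then closes $t^{d/2}\|\kappa\|_{L^\infty_xL^2_y}\lesssim\epsilon_0+\mathcal B_T^3$. If the Hardy--Littlewood--Sobolev step for $|\widehat V_\theta|\sim|z|^{\theta-d}$ fails for $\theta$ near $d$, we fall back on the crude bound $\|\mathcal T\|_{L^1_xL^2_y}\lesssim\|\kappa\|_{L^2}\|V*\rho\|_{L^\infty}\|\kappa\|_{L^2}$ combined with an interpolation of $\langle x\rangle^\sigma$ weights through $\mathrm J$. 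Once the bootstrap closes, the density decay \eqref{sr:eq:density-decay} follows from $\rho_\gamma=\|\kappa(x,\cdot)\|_{L^2_y}^2$ and interpolation with mass conservation. Uniqueness, nonnegativity and the regularity $|\mathrm J|^\sigma\gamma|\mathrm J|^\sigma\in C\mathfrak S^1$ follow exactly as in the proof of Theorem~\ref{thm:main}.

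For scattering we work at the level of $\gamma$, which sidesteps the gauge ambiguity of $\kappa$. Set $\Lambda(t)=\ee^{-\ii t\Delta/2}\gamma(t)\ee^{\ii t\Delta/2}$. Its derivative is $\ee^{-\ii t\Delta/2}[V*\rho_\gamma-\mathsf X_\gamma,\gamma]\ee^{\ii t\Delta/2}$, and its $\mathfrak S^{\sigma,1}$ norm is bounded, via the factorization $\gamma=\kappa\kappa^*$ and Schatten--H\"older, by $\|\mathcal T(\kappa,\kappa,\kappa)\|_{H^\sigma_xL^2_y}\|\kappa\|_{H^\sigma_xL^2_y}\lesssim t^{-\theta}\epsilon_0^4=t^{-\theta}\epsilon^2$. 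This uses the commutator estimate from the uniqueness proof in Theorem~\ref{thm:main}, refined by the $R=t$ splitting. Thus $\Lambda(t)$ is Cauchy in $\mathfrak S^{\sigma,1}$, and its limit $\gamma_\infty$ satisfies $\|\Lambda(t)-\gamma_\infty\|_{\mathfrak S^{\sigma,1}}\lesssim\epsilon^2\int_t^\infty s^{-\theta}\,\dd s\sim\epsilon^2t^{1-\theta}$. Since the free group commutes with $\langle\nabla\rangle$, this gives \eqref{sr:eq:density-scattering}. Nonnegativity passes to the limit. The weighted bound on $\gamma_\infty$ follows from the uniform bound on $\|x^\sigma\lambda(t)\|_{L^2}$ and lower semicontinuity, since $\langle x\rangle^\sigma\Lambda\langle x\rangle^\sigma=(\langle x\rangle^\sigma\lambda)(\langle x\rangle^\sigma\lambda)^*$. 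Uniqueness of $\gamma_\infty$ is immediate.
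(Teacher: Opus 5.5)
Your proposal is correct and shares the paper's overall architecture: half-density, the cutoff at $R=t$, integrability of $t^{-\theta}$, and Schatten--H\"older. Two steps are done differently.

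\textbf{Dispersive decay.} The paper never touches the Fourier-side profile equation here. Once the weighted norm is uniformly bounded, the free $L^1_x\to L^\infty_x$ dispersive estimate (with values in $L^2_y$) and Cauchy--Schwarz give $t^{d/2}\|\kappa(t)\|_{L^\infty_xL^2_y}\lesssim\|\lambda(t)\|_{L^2_{x,y}}+\||x|^\sigma\lambda(t)\|_{L^2_{x,y}}$ (Lemma~\ref{sr:lem:dispersion}). The whole argument then reduces to the closed inequality $\|\partial_t\lambda\|_{\Sigma^\sigma_xL^2_y}\lesssim t^{-\theta}\|\lambda\|_{\Sigma^\sigma_xL^2_y}^3$. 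Your worry that $L^1_x$ is not preserved by the free flow does not arise, because the dispersive estimate is applied to the profile $\lambda(t)$ itself, not to its time derivative.

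Your Kato--Pusateri detour is valid but redundant. To bound $\mathcal R_\theta$ in $X$ you already need $\|\widehat\lambda\|_{L^\infty_kL^2_y}\le\|\lambda\|_{L^1_xL^2_y}\lesssim\|\langle x\rangle^\sigma\lambda\|_{L^2_{x,y}}$, and that is exactly the bound that makes the profile equation unnecessary. Your Hardy--Littlewood--Sobolev concern is also unfounded. Bounding the phase factor by $2$, the kernel $|z|^{\theta-d}$ is admissible with $q=2d/(d+\theta)>1$ for every $\theta<d$. So the vague fallback is never needed.

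\textbf{Scattering.} The paper works at the half-density level. The same $t^{-\theta}$ bound on $\mathcal T(\kappa,\kappa,\kappa)$ in $H^\sigma_xL^2_y$, and after conjugation in the $|\mathrm J_x|^\sigma$-weighted norm, shows that $\lambda(t)\to\kappa_\infty$ in $\Sigma^\sigma_xL^2_y$. There is no gauge ambiguity in the left gauge. Then $\gamma_\infty=\kappa_\infty\kappa_\infty^*$ inherits $\langle x\rangle^\sigma\gamma_\infty\langle x\rangle^\sigma\in\mathfrak S^1$ directly. Your $\Lambda$-level argument instead needs lower semicontinuity for that weighted bound. It gives the same rate $\epsilon^2t^{1-\theta}$, while the paper's route also yields the half-density scattering statement of Theorem~\ref{sr:thm:main-kappa}.
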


We prove Theorem~\ref{sr:thm:main} using the factorization $\gamma=\kappa\kappa^*$, where $\kappa(t,x,y)$
satisfies \eqref{eqn:kappa}.

\begin{theorem}\label{sr:thm:main-kappa}
Let $d\ge2$, $1<\theta<d$, and $\sigma=\lfloor d/2\rfloor+1$.
If
$\epsilon_0:=\|\kappa_0\|_{\Sigma_x^\sigma L_y^2}$
is sufficiently small, then the Cauchy problem \eqref{eqn:kappa} with
$\kappa(0)=\kappa_0$ has a unique global solution
$\kappa\in C([0,\infty);\Sigma_x^\sigma L_y^2)$.
Moreover, for $2\le p\le\infty$ and $t\ge0$,
\begin{equation}\label{sr:eq:decay}
    \|\kappa(t)\|_{L_x^pL_y^2}
    \lesssim\epsilon_0\langle t\rangle^{-d(1/2-1/p)}.
\end{equation}
In addition, there exists a unique $\kappa_{\infty}\in\Sigma_x^\sigma L_y^2$ such that
\begin{equation}\label{sr:eq:scattering}
    \|\ee^{-\ii t\Delta/2}\kappa(t)-\kappa_{\infty}\|_{\Sigma_x^\sigma L_y^2}
    \lesssim\epsilon_0^3 \, t^{1-\theta}
\end{equation}
for $t \ge 1$.
\end{theorem}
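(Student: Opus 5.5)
The proof will be a bootstrap argument in the spirit of Section~\ref{sec:decayestimates}. Because $V_\theta$ with $\theta>1$ is short range, no resonant extraction is required, and the argument is simpler than in the Coulomb case. Lemma~\ref{lem:LWP} carries over with $V$ replaced by $V_\theta$. For $1<\theta<d$ we have $V_\theta\mathbf 1_{|x|<1}\in L^1$ and $V_\theta\mathbf 1_{|x|\ge1}\in L^\infty$, so Lemmas~\ref{lem:direct}, \ref{lem:exchange-term} and the continuation criterion hold verbatim. We then bootstrap the norm
\[
\mathcal B_T(\kappa)=\sup_{1\le t\le T}\Bigl(\|\kappa\|_{H^\sigma_xL^2_y}+\||\mathrm J_x(t)|^\sigma\kappa\|_{L^2_{x,y}}+t^{d/2}\|\kappa\|_{L^\infty_xL^2_y}\Bigr),
\]
without any $t^\delta$ loss.

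The energy step repeats the proof of Lemma~\ref{lem:energy} with the splitting $V_\theta=V_{\theta,<t}+V_{\theta,\ge t}$, where $\|V_{\theta,<t}\|_{L^1}\lesssim t^{d-\theta}$ and $\|V_{\theta,\ge t}\|_{L^\infty}=t^{-\theta}$. Together with $\|\kappa\|_{L^\infty_xL^2_y}\lesssim t^{-d/2}\mathcal B_T$, this gives the analogue of \eqref{est:homogeneous-nonlinear-energy-time} with prefactor $t^{d-\theta}\cdot t^{-d}+t^{-\theta}\lesssim t^{-\theta}\mathcal B_T^2$. The same bound holds for $\widetilde\kappa$ with the factor $t^\sigma$ absorbed exactly as before, and also at the $L^2$ level by \eqref{est:trilinear-cutoff-L2}. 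Since $\theta>1$ this is integrable, and we obtain
\[
\|\partial_x^\beta\lambda(t)\|+\|x^\beta\lambda(t)\|\lesssim\epsilon_0+\mathcal B_T^3,
\qquad |\beta|\le\sigma.
\]
For decay we use $\|\kappa\|_{L^\infty_xL^2_y}\lesssim t^{-d/2}\|\langle x\rangle^\sigma\lambda\|_{L^2}$. This follows from the representation in Lemma~\ref{lem:decay}, or more simply from $\|\widehat\lambda\|_{L^\infty_kL^2_y}\lesssim\|\langle x\rangle^\sigma\lambda\|_{L^2_{x,y}}$ since $\sigma>d/2$. Hence the decay term needs no profile equation, and we get $\mathcal B_T\le C_0\epsilon_0+C_1\mathcal B_T^3$. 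A continuity argument then closes the bootstrap and yields global existence, and \eqref{sr:eq:decay} follows by interpolation with mass conservation.

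For scattering we write $\lambda(t)=\ee^{-\ii t\Delta/2}\kappa(t)$, so that $\ii\partial_t\lambda=\ee^{-\ii t\Delta/2}\mathcal T_{V_\theta}(\kappa,\kappa,\kappa)$. The bounds above give
\[
\|\partial_t\lambda(t)\|_{\Sigma^\sigma_xL^2_y}\lesssim\|\mathcal T(\kappa,\kappa,\kappa)\|_{H^\sigma}+\||\mathrm J|^\sigma\mathcal T(\kappa,\kappa,\kappa)\|_{L^2}\lesssim t^{-\theta}\epsilon_0^3.
\]
The weighted part is handled through the $\widetilde\kappa$ identity \eqref{est:weighted-energy-reduction} together with the lower-order $\mathrm J^\beta$ terms. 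Integrating from $t$ to $\infty$ gives a limit $\kappa_\infty$ with error $\lesssim\epsilon_0^3t^{1-\theta}$, and uniqueness of the limit is immediate.

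Theorem~\ref{sr:thm:main} then follows as in the proof of Theorem~\ref{thm:main}. We take $\kappa_0=\sqrt{\gamma_0}$ and $\gamma_\infty=\kappa_\infty\kappa_\infty^*$. For the $\mathfrak S^{\sigma,1}$ and weighted bounds we apply Schatten--H\"older with $\langle\nabla\rangle^\sigma$ on both factors, which gives the error $\epsilon_0\cdot\epsilon_0^3t^{1-\theta}\sim\epsilon^2t^{1-\theta}$. Uniqueness of $\gamma$ follows as before.

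The main obstacle is the weighted estimate for all $|\beta|\le\sigma$ in $\Sigma^\sigma$. We must check that the Gagliardo--Nirenberg interpolation for $\widetilde\kappa$ still produces the full $t^{-\theta}$ gain uniformly. The nonsharp $H^\sigma$ regularity is a potential sticking point near $\theta$ close to $d$, where $V_\theta$ is less integrable. My first attempt there would be to use the bound $\|V_{\theta,<t}\|_{L^1}$ only when $\theta<d$, which the hypotheses guarantee.
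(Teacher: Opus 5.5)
Your proposal is correct and follows the paper's proof essentially step by step. You split $V_\theta$ at $R=t$, and you control $t^{d/2}\|\kappa(t)\|_{L^\infty_xL^2_y}$ by $\|\langle x\rangle^\sigma\lambda(t)\|_{L^2_{x,y}}$ through the $L^1\to L^\infty$ dispersive bound, so no profile equation is needed. From this you derive $\|\partial_t\lambda(t)\|_{\Sigma^\sigma_xL^2_y}\lesssim t^{-\theta}\|\lambda(t)\|_{\Sigma^\sigma_xL^2_y}^3$ for both the Sobolev part and, via $\widetilde\kappa$, the weighted part; you then close by continuity and integrate the tail to get scattering.

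The only cosmetic difference is that you keep the decay term inside the bootstrap norm, while the paper bootstraps $\sup_t\|\lambda(t)\|_{\Sigma^\sigma_xL^2_y}$ directly. Your concern about $\theta$ close to $d$ is unnecessary, since the argument only uses $V_{<R}\in L^1$ and $V_{\ge R}\in L^\infty$.
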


For $R>0$, write
\begin{equation}\label{sr:eq:split}
    V_{<R}=V\mathbf1_{\{|x|<R\}},\qquad
    V_{\ge R}=V\mathbf1_{\{|x|\ge R\}},\qquad
    V=V_{<R}+V_{\ge R}.
\end{equation}
Thus $V_{<1}\in L^1$ and $V_{\ge1}\in L^\infty$.

For local well-posedness, we use the Sobolev estimates \eqref{est:direct-Hsigma} and \eqref{est:exchange-Hsigma}, whose proofs depend on \(V\) only through these bounds.
Thus \eqref{eq:T-Sigma-local} and the conclusions of
Lemma~\ref{lem:LWP} and the following two remarks also hold for this potential.
Mass conservation follows from self-adjointness of $H_\gamma$,
as in the proof of Lemma~\ref{lem:energy}.

\begin{lemma}\label{sr:lem:dispersion}
Let $m>d/2$ and $\langle x\rangle^m f\in L^2_{x,y}$. Then, for $t>0$,
\begin{equation}\label{sr:eq:profile-dispersion}
    \|\ee^{\ii t\Delta/2}f\|_{L_x^\infty L_y^2}
    \lesssim_m t^{-d/2}\|\langle x\rangle^m f\|_{L^2_{x,y}}.
\end{equation}
Consequently, for $0<t<T_*$,
\begin{equation}\label{sr:eq:dispersion-energy}
    t^{d/2}\|\kappa(t)\|_{L_x^\infty L_y^2}
    \lesssim\|\kappa(t)\|_{L^2_{x,y}}
    +\||\mathrm J_x(t)|^\sigma\kappa(t)\|_{L^2_{x,y}}.
\end{equation}
\end{lemma}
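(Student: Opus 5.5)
The first estimate \eqref{sr:eq:profile-dispersion} follows the route of Lemma~\ref{lem:decay}, now with no smallness or correction term needed. Write the free propagator through its kernel:
\[
(\ee^{\ii t\Delta/2}f)(x,y)=(2\pi\ii t)^{-d/2}\int_{\mathbb R^d}\ee^{\ii|x-z|^2/(2t)}f(z,y)\,\dd z .
\]
For each fixed $x$, take the $L^2_y$ norm inside the $z$-integral using Minkowski's integral inequality. The phase has modulus one, so
\[
\|(\ee^{\ii t\Delta/2}f)(x,\cdot)\|_{L^2_y}\le(2\pi t)^{-d/2}\int\|f(z,\cdot)\|_{L^2_y}\,\dd z=(2\pi t)^{-d/2}\|f\|_{L^1_xL^2_y}.
\]
Then Cauchy--Schwarz in $z$, together with $\langle z\rangle^{-m}\in L^2(\mathbb R^d)$ (which is exactly where $m>d/2$ enters), gives
\[
\|f\|_{L^1_xL^2_y}\le\|\langle z\rangle^{-m}\|_{L^2}\|\langle x\rangle^m f\|_{L^2_{x,y}}.
\]
Combining the two displays yields \eqref{sr:eq:profile-dispersion}. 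To justify the kernel representation, I first take $f$ Schwartz in $x$ with values in $L^2_y$ (or $f\in L^1_xL^2_y\cap L^2_{x,y}$) and then pass to the general case by density. The density step works because the right-hand side controls the $L^1_xL^2_y$ norm, and $\ee^{\ii t\Delta/2}$ is continuous on $L^2_{x,y}$.

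For \eqref{sr:eq:dispersion-energy}, I apply this with $f=\lambda(t)=\ee^{-\ii t\Delta_x/2}\kappa(t)$ and $m=\sigma>d/2$, so that $\kappa(t)=\ee^{\ii t\Delta/2}\lambda(t)$. It remains to bound $\|\langle x\rangle^\sigma\lambda(t)\|_{L^2_{x,y}}$. Since $\langle x\rangle^\sigma\lesssim1+|x|^\sigma$, this is bounded by $\|\lambda(t)\|_{L^2_{x,y}}+\||x|^\sigma\lambda(t)\|_{L^2_{x,y}}$. The free group is unitary, so the first term equals $\|\kappa(t)\|_{L^2_{x,y}}$. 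By the definition $|\mathrm J_x(t)|^\sigma=\ee^{\ii t\Delta_x/2}|x|^\sigma\ee^{-\ii t\Delta_x/2}$, the second term equals $\||\mathrm J_x(t)|^\sigma\kappa(t)\|_{L^2_{x,y}}$.

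Lemma~\ref{lem:LWP}, whose conclusions hold for this potential as noted above, guarantees that $|\mathrm J_x|^\sigma\kappa\in C([0,T_*);L^2_{x,y})$. Hence the right-hand side of \eqref{sr:eq:dispersion-energy} is finite, and $\lambda(t)$ satisfies the weighted hypothesis for every $0<t<T_*$.

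No step here is a real obstacle. The only point needing care is the density/approximation justification of the kernel formula for $L^2_y$-valued data, and Minkowski's inequality handles it directly.
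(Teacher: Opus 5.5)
Your proposal is correct and follows the paper's argument: the free Schr\"odinger kernel with Minkowski's inequality gives the $L^1_xL^2_y\to L^\infty_xL^2_y$ bound with factor $t^{-d/2}$, Cauchy--Schwarz using $\langle x\rangle^{-m}\in L^2$ for $m>d/2$ gives the weighted bound, and applying it to $f=\lambda(t)$ with $m=\sigma$ yields the second estimate. You also write out the step $\|\langle x\rangle^\sigma\lambda(t)\|_{L^2_{x,y}}\lesssim\|\kappa(t)\|_{L^2_{x,y}}+\||\mathrm J_x(t)|^\sigma\kappa(t)\|_{L^2_{x,y}}$ and the density justification, which the paper leaves implicit.
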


\begin{proof}
The free Schr\"odinger kernel and Cauchy--Schwarz give
\[
    \|\ee^{\ii t\Delta/2}f\|_{L_x^\infty L_y^2}
    \lesssim t^{-d/2}\|f\|_{L_x^1L_y^2}
    \lesssim_m t^{-d/2}\|\langle x\rangle^m f\|_{L^2_{x,y}}.
\]
Taking $f=\lambda(t)$ and $m=\sigma>d/2$ proves
\eqref{sr:eq:dispersion-energy}.
\end{proof}

\begin{lemma}\label{sr:lem:nonlinear}
For $u\in H_x^\sigma L_y^2$ and $R>0$,
\begin{equation}\label{sr:eq:nonlinear-bounds}
\begin{aligned}
    \|\mathcal T(u,u,u)\|_{L^2_{x,y}}
    &\lesssim
    \left(R^{d-\theta}\|u\|_{L_x^\infty L_y^2}^2
    +R^{-\theta}\|u\|_{L^2_{x,y}}^2\right)\|u\|_{L^2_{x,y}},\\
    \|\mathcal T(u,u,u)\|_{\dot H_x^\sigma L_y^2}
    &\lesssim
    \left(R^{d-\theta}\|u\|_{L_x^\infty L_y^2}^2
    +R^{-\theta}\|u\|_{L^2_{x,y}}^2\right)
    \|u\|_{\dot H_x^\sigma L_y^2}.
\end{aligned}
\end{equation}
\end{lemma}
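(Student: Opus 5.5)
The plan is to repeat the proof of Lemma~\ref{lem:trilinear-cutoff} word for word, with the Coulomb kernel replaced by $V(x)=|x|^{-\theta}$, $1<\theta<d$. The only properties of $V$ that the argument uses are the sizes of its two pieces in the splitting \eqref{sr:eq:split}. Since $\theta<d$, the near part is integrable with
\[
\|V_{<R}\|_{L^1}\lesssim \int_0^R r^{d-1-\theta}\,\dd r\lesssim R^{d-\theta}.
\]
The far part is bounded with $\|V_{\ge R}\|_{L^\infty}=R^{-\theta}$. These two bounds replace $R^{d-1}$ and $R^{-1}$ in the Coulomb case.

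The first ingredient is the pair of pointwise bounds. For any kernel $K\in L^1+L^\infty$, Cauchy--Schwarz in $a$ and Minkowski give
\[
\|(K*\rho_{fg^*})h(x,\cdot)\|_{L^2_y}\le (|K|*(FG))(x)H(x),\qquad \|\mathsf X^K_{fg^*}h(x,\cdot)\|_{L^2_y}\le F(x)(|K|*(GH))(x),
\]
where $F,G,H$ are the $L^2_y$ norms of $f,g,h$.

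From these we get two trilinear bounds. For $K\in L^1$, Young and H\"older give
\[
\|\mathcal T_K(f,g,h)\|_{L^2_{x,y}}\lesssim\|K\|_{L^1}\prod_j\|\cdot\|_{L^{p_j}_xL^2_y}\quad\text{whenever }\sum_j p_j^{-1}=1/2.
\]
For $K\in L^\infty$, the bound is $\|K\|_{L^\infty}$ times the product of the three $L^2_{x,y}$ norms.

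For the $L^2$ estimate we apply the $L^1$ bound to $V_{<R}$ with $(p_1,p_2,p_3)=(\infty,\infty,2)$, and the $L^\infty$ bound to $V_{\ge R}$. This gives the first line of \eqref{sr:eq:nonlinear-bounds} with $R^{d-\theta}$ and $R^{-\theta}$ in place of the Coulomb factors.

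For the $\dot H^\sigma_x$ estimate we take $u$ smooth and expand $\partial_x^\beta\mathcal T_K(u,u,u)$, with $|\beta|=\sigma$, by Leibniz. This works because $\partial_x$ commutes with convolution in $x$ and falls on each of the three entries. Then:
\begin{enumerate}
\item We choose $p_j=2\sigma/|\beta_j|$, with $p_j=\infty$ when $\beta_j=0$.
\item The $L^2_y$-valued Gagliardo--Nirenberg inequality bounds the product of the $\|\partial^{\beta_j}u\|_{L^{p_j}_xL^2_y}$ by $\|u\|_{L^\infty_xL^2_y}^2\|u\|_{\dot H^\sigma_xL^2_y}$.
\item Interpolation between $L^2_{x,y}$ and $\dot H^\sigma_xL^2_y$ bounds the product of the $L^2_{x,y}$ norms of the derivatives by $\|u\|_{L^2_{x,y}}^2\|u\|_{\dot H^\sigma_xL^2_y}$.
\item We use the first product bound for $V_{<R}$ and the second for $V_{\ge R}$, then sum over $\beta$.
\end{enumerate}
The estimate extends from smooth $u$ to all of $H^\sigma_xL^2_y$ by density.

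There is no real obstacle, since $\theta$ enters only through the two norms of the split kernel. The one point to check is local integrability of $V$ at the origin, which requires $\theta<d$ and is assumed.
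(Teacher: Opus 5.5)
Your proposal is correct and follows the paper's proof: you rerun the argument of Lemma~\ref{lem:trilinear-cutoff} with $\|V_{<R}\|_{L^1}\lesssim R^{d-\theta}$ and $\|V_{\ge R}\|_{L^\infty}=R^{-\theta}$ replacing the Coulomb bounds. This is exactly the substitution the paper makes, with the same pointwise bounds, Leibniz expansion, exponents $p_j=2\sigma/|\beta_j|$, Gagliardo--Nirenberg step and density argument.
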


\begin{proof}
Since $\|V_{<R}\|_{L^1}\lesssim R^{d-\theta}$ and $\|V_{\ge R}\|_{L^\infty}=R^{-\theta}$, the argument in the proof of Lemma~\ref{lem:trilinear-cutoff} proves \eqref{sr:eq:nonlinear-bounds}.
\end{proof}

\begin{proof}[Proof of Theorem~\ref{sr:thm:main-kappa}]
By the local theory, we have
\[
    \sup_{0\le t\le2}\|\lambda(t)\|_{\Sigma_x^\sigma L_y^2}\lesssim\epsilon_0.
\]
By \eqref{eq:free-Sigma-bound}, \eqref{eq:T-Sigma-local}, and the
profile equation in Remark~\ref{rmk:lambda-C1}, we obtain for $0 \le t \le 1$ that
\[
    \|\partial_t\lambda(t)\|_{\Sigma_x^\sigma L_y^2}
    \lesssim\epsilon_0^3.
\]
For $1\le t<T_*$, take $R=t$ in
Lemma~\ref{sr:lem:nonlinear}.
By \eqref{sr:eq:dispersion-energy},
\begin{equation}\label{sr:eq:T-energy}
    \|\mathcal T(\kappa,\kappa,\kappa)(t)\|_{H_x^\sigma L_y^2}
    \lesssim t^{-\theta}\|\lambda(t)\|_{\Sigma_x^\sigma L_y^2}^{3}
\end{equation}
Applying Lemma~\ref{sr:lem:nonlinear} with $R=t$ after introducing $\widetilde\kappa(t,x,y) = \ee^{-\ii|x|^2/(2t)}\kappa(t,x,y)$, as in the proof of Lemma~\ref{lem:energy}, we have
\[
    \||\mathrm J_x(t)|^\sigma\mathcal T(\kappa,\kappa,\kappa)(t)\|_{L^2_{x,y}}
    \lesssim t^{-\theta} \|\lambda(t)\|_{\Sigma_x^\sigma L_y^2}^{3}.
\]
Together with \eqref{sr:eq:T-energy} and the profile equation in
Remark~\ref{rmk:lambda-C1}, this gives
\begin{equation}\label{sr:eq:profile-derivative}
    \|\partial_t\lambda(t)\|_{\Sigma_x^\sigma L_y^2}
    \lesssim t^{-\theta}\|\lambda(t)\|_{\Sigma_x^\sigma L_y^2}^{3},
    \qquad 1\le t\le T
\end{equation}
Fix $1<T<T_*$. Integrating \eqref{sr:eq:profile-derivative}, we obtain
\[
    \sup_{1\le t\le T}\|\lambda(t)\|_{\Sigma_x^\sigma L_y^2}
    \le C_0\epsilon_0+C_1 \sup_{1\le s\le T}\|\lambda(s)\|_{\Sigma_x^\sigma L_y^2}^{3},
\]
where $C_0,C_1 >1 $ depend only on $d,\theta$.

Take $\epsilon_0$ small enough that $8C_1C_0^2\epsilon_0^2<1$.
A standard continuity argument then implies
\[
    \sup_{1\le t\le T}\|\lambda(t)\|_{\Sigma_x^\sigma L_y^2}
    \le2C_0\epsilon_0,\qquad 1\le T<T_*.
\]
Since $\|\kappa(t)\|_{H_x^\sigma L_y^2}
=\|\lambda(t)\|_{H_x^\sigma L_y^2}$, the continuation criterion in
Lemma~\ref{lem:LWP} implies $T_*=\infty$. Consequently,
\begin{equation}\label{sr:eq:uniform-energy}
    \sup_{t\ge0}\left(
    \|\kappa(t)\|_{H_x^\sigma L_y^2}
    +\||\mathrm J_x(t)|^\sigma\kappa(t)\|_{L^2_{x,y}}\right)
    \lesssim\epsilon_0.
\end{equation}

The $L_x^\infty L_y^2$ decay follows from \eqref{sr:eq:dispersion-energy}.
Interpolation with mass conservation gives \eqref{sr:eq:decay}.
Sobolev embedding and the local theory cover $0\le t\le1$.

Since $\theta>1$, \eqref{sr:eq:profile-derivative} and
\eqref{sr:eq:uniform-energy} imply that $\lambda(t)$ converges in $\Sigma^\sigma_x L^2_y$ as $t \to \infty$. By Duhamel's formula, its limit is
\[
    \kappa_\infty
    =\kappa_0-\mathrm i\int_0^\infty
    \mathrm e^{-\mathrm is\Delta/2}
    \mathcal T(\kappa,\kappa,\kappa)(s)\, \dd s.
\]
For $t\ge1$,
\[
    \|\lambda(t)-\kappa_{\infty}\|_{\Sigma_x^\sigma L_y^2}
    \lesssim\epsilon_0^3\int_t^\infty s^{-\theta}\,\dd s
    \lesssim \epsilon_0^3 \, t^{1-\theta}.
\]
This proves \eqref{sr:eq:scattering}, completing the proof of Theorem~\ref{sr:thm:main-kappa}.
\end{proof}

\begin{proof}[Proof of Theorem~\ref{sr:thm:main}]
Let $\kappa_0=\sqrt{\gamma_0}$. The identities
\[
\begin{aligned}
    \|\kappa_0\|_{H_x^\sigma L_y^2}^2
    =\|\gamma_0\|_{\mathfrak S^{\sigma,1}},\qquad
    \|\langle x\rangle^\sigma\kappa_0\|_{L^2_{x,y}}^2
    =\|\langle x\rangle^\sigma\gamma_0\langle x\rangle^\sigma\|_{\mathfrak S^1}
\end{aligned}
\]
imply $\epsilon_0=\|\kappa_0\|_{\Sigma_x^\sigma L_y^2}\le\sqrt{2\epsilon}$.
Thus Theorem~\ref{sr:thm:main-kappa} applies if $\epsilon_*$ is
sufficiently small. Set
$\gamma(t)=\kappa(t)\kappa(t)^*$ and $\gamma_{\infty}=\kappa_{\infty}\kappa_{\infty}^*$.
Taking trace norms and using \eqref{sr:eq:uniform-energy}, we obtain
\begin{equation}\label{sr:eq:density-energy}
\begin{aligned}
    &\sup_{t\ge0}\left(
    \|\gamma(t)\|_{\mathfrak S^{\sigma,1}}
    +\||\mathrm J(t)|^\sigma\gamma(t)|\mathrm J(t)|^\sigma\|_{\mathfrak S^1}
    \right)
    =\sup_{t\ge0}\left(
    \|\kappa(t)\|_{H_x^\sigma L_y^2}^2
    + \||\mathrm J_x(t)|^\sigma\kappa(t)\|_{L^2_{x,y}}^2\right)\lesssim\epsilon_0^2\lesssim\epsilon.
\end{aligned}
\end{equation}
As in the proof of Theorem~\ref{thm:main}, we also have
$\gamma\in C([0,\infty);\mathfrak S^{\sigma,1})$
and $|\mathrm J|^\sigma\gamma|\mathrm J|^\sigma\in C([0,\infty);\mathfrak S^1)$.
Since $\kappa_{\infty}\in\Sigma_x^\sigma L_y^2$,
we also have $\gamma_{\infty}\in\mathfrak S^{\sigma,1}$ and
$\langle x\rangle^\sigma\gamma_{\infty}\langle x\rangle^\sigma\in\mathfrak S^1$.
Since $\sigma\ge2$, the argument in that proof also shows that
$\gamma\in C^1([0,\infty);\mathfrak S^1)$ and satisfies
\eqref{eq:HF} in $\mathfrak S^1$.
Since $\|\rho_\gamma(t)\|_{L_x^p}
=\|\kappa(t)\|_{L_x^{2p}L_y^2}^{2}$,
\eqref{sr:eq:density-decay} follows from \eqref{sr:eq:decay}.
Uniqueness among self-adjoint solutions in
$C([0,\infty);\mathfrak S^{\sigma,1})$ follows as in the proof of
Theorem~\ref{thm:main}, using $V_{<1}\in L^1$, $V_{\ge1}\in L^\infty$,
and the estimates \eqref{est:direct-Hsigma} and \eqref{est:exchange-Hsigma}.

Finally, by the Schatten--H\"older inequality and \eqref{sr:eq:scattering},
\[ 
\|\gamma(t)-e^{it\Delta/2}\gamma_\infty e^{-it\Delta/2}\|_{\mathfrak S^{\sigma,1}} 
\le 
\bigl(\|\kappa(t)\|_{H_x^\sigma L_y^2} +\|\kappa_\infty\|_{H_x^\sigma L_y^2}\bigr) \|\kappa(t)-e^{it\Delta/2}\kappa_\infty\|_{H_x^\sigma L_y^2} 
\lesssim
\epsilon_0^4t^{1-\theta} \lesssim\epsilon^2t^{1-\theta}
\]
for $t \ge 1$. 
Uniqueness of $\gamma_{\infty}$
follows from convergence of $\ee^{-\ii t\Delta/2}\gamma(t)\ee^{\ii t\Delta/2}$ in $\mathfrak S^{\sigma,1}$.
\end{proof}

\end{document}